\def\mathclap#1{\text{\hbox to 0pt{\hss$\mathsurround=0pt#1$\hss}}}
\newtheorem{assumption}{Assumption}
\newtheorem{definition}{Definition}
\newtheorem{remark}{Remark}
\newtheorem{corollary}{Corollary}
\newtheorem{lemma}{Lemma}
\newtheorem{theorem}{Theorem}
\newcommand{\X}{\mathcal{X}}
\newcommand{\set}{\mathcal{S}}
\newcommand{\setU}{\mathcal{U}}
\newcommand{\setI}{\mathcal{I}}
\newcommand{\XR}{\hat{\mathcal{X}}}
\newcommand{\Real}{\mathbb{R}}
\newcommand{\Complex}{\mathbb{C}}
\newcommand{\iu}{{\bf i}}
\newcommand{\K}{\mathbb{K}}
\newcommand{\Zahlen}{\mathbb{Z}}
\newcommand{\xlo}{x^{\rm lo}}
\newcommand{\tlim}{t^{\dagger}}
\newcommand{\Hn}{{\mathsf H}}
\newcommand{\co}{{\mathsf c}}
\newcommand{\tr}{{\rm tr}}
\newcommand{\rank}{{\rm rank}}
\newcommand{\G}{\mathcal{G}}
\newcommand{\V}{\mathcal{V}}
\newcommand{\E}{\mathcal{E}}
\newcommand{\M}{\mathcal{M}}
\newcommand{\hs}{\tilde{s}}
\newcommand{\hv}{\tilde{v}}
\newcommand{\hl}{\tilde{\ell}}
\newcommand{\hS}{\tilde{S}}
\newcommand{\bh}{\bar{h}}
\newcommand{\bhs}{\overline{h^*}}
\newcommand{\hfea}{h_{\rm fea}}
\newcommand{\Lfea}{L_{\rm fea}}
\newcommand{\Hset}{\mathcal{H}}
\newcommand{\bHset}{\mathcal{\bar{H}}}
\newcommand{\tHset}{\mathcal{\hat{H}}}
\newcommand{\ball}{\mathcal{B}}
\newcommand{\td}{t^\dag}
\newcommand{\tdd}{t^\ddag}
\newcommand{\deltap}{\delta_+}
\newcommand{\deltam}{\delta_-}
\DeclareMathOperator*{\argmin}{arg\,min}
\DeclareMathOperator*{\argmax}{arg\,max}
\newcommand{\Vd}{V^\dagger}
\newcommand{\hd}{h^\dagger}
\newcommand{\hdd}{h^\ddagger}
\newcommand{\hxtra}{h^{\wr}}
\newcommand{\tV}{\tilde{V}}
\newcommand{\tpath}{\tilde{h}}
\newcommand{\proj}{{\mathcal P}}
\begin{document}
\title{\huge Conditions for Exact Convex Relaxation and No Spurious Local Optima}

\author{Fengyu~Zhou,~\IEEEmembership{Student Member,~IEEE,}
        and~Steven~H.~Low,~\IEEEmembership{Fellow,~IEEE}
\thanks{
Partial and preliminary results have appeared in \cite{ZhoL20}.

Fengyu Zhou and Steven H. Low are with the Department
of Electrical Engineering, California Institute of Technology, Pasadena,
CA, 91125 USA e-mail: (\{f.zhou, slow\}@caltech.edu).}}

%



\maketitle

\begin{abstract}
Non-convex optimization problems can be approximately solved via relaxation or local algorithms.
For many practical problems such as optimal power flow (OPF) problems, both approaches tend to succeed in the sense that 
relaxation is usually exact and local algorithms usually converge to a global optimum.
In this paper, we study conditions which are sufficient or necessary for such non-convex problems to simultaneously have exact relaxation and no spurious local optima. 
Those conditions help us explain the widespread empirical experience that local algorithms for OPF problems often work extremely well.
\end{abstract}

\begin{IEEEkeywords}
Convex relaxation, local optimum, optimal power flow, semidefinite program.
\end{IEEEkeywords}

\IEEEpeerreviewmaketitle

\section{Introduction}
\IEEEPARstart{N}{on-convex} optimization problems in general are computationally challenging.
However, many heuristics tend to work well for real-world problems.
Those approaches include convex relaxations and local algorithms.
It is usually hoped that relaxations yields exact solutions and local optima are also globally optimal.
In this paper, we derive
the conditions, sufficient or necessary, for these two properties to hold simultaneously.
Our focus is specifically on the optimization formulation with convex cost and non-convex constraints.

\subsection{Related Works}
Many problems have been proved to have exact relaxation and no spurious local optima 
(such as matrix completion \cite{candes2009exact, candes2010power, ge2016matrix}, low rank semidefinite program \cite{barvinok1995problems, pataki1998rank,burer2005local}),
the proofs for those two properties are usually based on different types of certificates.
In this subsection, we review some widely-used certificates for each property.

One type of certificates to exhibit relaxation exactness is by showing that any relaxed (and infeasible) point maps to a feasible solution with lower cost.
This asserts that relaxed points cannot be the optimal solution.
For instance, \cite{farivar2013branchI,gan2015exact} prove that optimal power flow problems can be solved via second-order cone relaxation under certain conditions,
using the argument that any solution in the interior of the second-order cone can always be moved towards the boundary to further reduce the cost.
In \cite{barvinok1995problems, pataki1998rank}, it is proved that if a semi-definite program has a solution with sufficiently large rank, then one can always reduce the rank without increasing the cost or violating the constraints.
Another type of certificates involve studying the dual variables and KKT conditions.
The underlying idea is a pair of primal and dual solutions satisfying KKT conditions that certify their optimality for both the primal and dual problems.
Thus constructing dual variables with certain structures can also certify the optimality of primal solutions.
In \cite{candes2009exact} for instance, the dual variable is related to the subgradient of the cost function at a desired matrix and therefore it helps certify the optimality of that desired matrix.
Another example is \cite{lavaei2012zero}, which proves the primal matrix should be of rank $1$ through the argument that the null space of its dual matrix has dimension at most $1$.
Similar techniques are also used in \cite{jalden2003semidefinite, lu2019tightness, li2015sufficient}.

There are also considerable literature establishing the global optimality of local optima.
We refer to \cite{sun2015nonconvex, ge2017no} and references therein.
In \cite{sun2015nonconvex}, the authors focus on a class of problems with twice continuously differentiable function as the cost and Riemannian manifold as the feasible set.
The values of Riemannian gradient and Hessian at a certain point then help certify properties such as {\it strong gradient}, {\it negative curvature} or {\it local convexity} in its neighborhood.
It then eliminates spurious local optima and saddle points, where local algorithms can be trapped.
This technique or idea were also used in \cite{sun2016complete} for dictionary recovery problem and \cite{boumal2016nonconvex} for phase synchronization.
In both problems, the Riemannian manifold is some $n$-sphere or the Cartesian product of $n$-spheres.
The framework summarized in \cite{ge2017no} also leverages the landscape of the cost function, and the problem is usually reformulated into an unconstrained form.
Instead of explicitly computing the gradient and Hessian matrix, the paper shows it suffices to find a single direction of improvement.
For certain symmetric positive definite problems, the paper shows that the decision variable will always get closer to the global optimizer when the cost is reduced.
A similar idea was also applied in \cite{arora2015simple}, 
where the main result is built upon a correlation condition which states that the gradient (or any updating rule) is correlated with the direction from the current location towards the global optimizer.
Therefore the underlying algorithm such as gradient descent can always produce a solution closer to the global optimizer as the algorithm progresses.

\subsection{Contribution}
The brief review above shows most works study exact relaxation and local optimality separately.
It is unclear what might be the common feature of non-convex problems that possess both properties.
Many real-world non-convex problems, however, seem to possess both properties, either provably or empirically, and it is hard to explain why these nice properties, though seemingly different, often occur simultaneously.
Besides, most literature on local optimality focuses on problems without constraints or with tractable constraints.
This is usually the case for problems in the learning area.
However, for problems arising in cyber-physical systems, the constraints could include non-convex functions enforced by physical laws, as we will see in power systems.
In these cases, either the feasible set is not a Riemannian manifold, or the Riemannian gradient and Hessian are very hard to derive.
These questions motivate us to study conditions, sufficient or necessary, for problems to simultaneously have exact relaxation and no spurious local optima.
These conditions also help us study local optimality using properties of its relaxation, instead of its landscape.

Our conditions have two parts.
The first part, which also appeared in \cite{ZhoL20}, is on the sufficient condition.
Roughly, if for any relaxed point, there exists a path connecting it to the non-convex feasible set and the path satisfies  the following:
\begin{itemize}
\item along the path the cost is non-increasing,
\item along the path the `distance' to the non-convex feasible set is non-increasing,
\end{itemize}
then the problem must have exact relaxation and no spurious local optima simultaneously.
Here the `distance' can be any properly constructed function, as we will define later as a Lyapunov-like function (\cref{def:Lyapunov}).
The second part is on the necessary condition,
which says that if a problem does have exact relaxation and no spurious local optima simultaneously,
then there must exist such a Lyapunov-like function and paths satisfying the requirements above.
\footnote{The necessary condition is based upon some stronger assumptions so the second part is not the exact converse of the first part.}

Though Lyapunov-like functions and paths are guaranteed to exist, for specific problems it could still be difficult to construct them.
We then derive certain rules to construct a Lyapunov-like function and paths of a new problem from primitive problems with known Lyapunov-like functions and paths.
This process allows us to reuse and extend known results as the problem changes and grows.
Finally, we apply the proposed approach to two specific problems, optimal power flow (OPF) and low rank SDP.
Our work proves the first known condition (that can be checked {\it a priori}) for OPF to have no spurious local optima,
and it helps explain the widespread empirical experience that local algorithms for OPF problems often work extremely well.

\subsection{Background for Power Systems}
As one of the applications and main motivation of this work, OPF is a core problem in power systems.
First proposed in \cite{carpentier1962contribution}, OPF is a class of optimization problems that minimizes a certain cost subject to nonlinear physical laws and operational constraints.
It is known to be non-convex and NP-hard in its AC formulation \cite{Verma2009, lavaei2012zero, lehmann2016ac}.
Therefore, there is no known efficient algorithm that can solve all problem instances in polynomial time.
Traditional approaches to solving OPF are usually based on local algorithms such as Newton-Raphson, see \cite{momoh1999reviewa, momoh1999reviewb, jabr2002primal} for examples.
Over the past decade, techniques on convex relaxation have also been introduced to solve OPF \cite{jabr2006radial, Bai2008}.
A surprising empirical finding in the literature shows that despite the non-convexity, both local algorithms and convex relaxations very often yield global optimum of the original non-convex problem \cite{jabr2006radial, Bai2008, lavaei2012zero, bose2015quadratically}.
In recent years, there have been considerable analytical works on provable conditions for the relaxation exactness,
which are summarized in the reviews \cite{low2014convex, molzahn2019survey} and references therein.
However, few analytical results are known on the performance guarantee of local algorithms. 
In this paper, we show that a known sufficient condition for exact relaxation is also sufficient for local optima to be globally optimal.
To the best of our knowledge, this is the first analytical result of its kind, and we hope that the approaches developed in this paper can help derive more sufficient conditions along this direction.
\section{Preliminaries}
\label{sec:prelim}

In this paper, we will use $\K$ to denote the set $\Real$ of real numbers or the set $\Complex$ of complex numbers.
For any finite positive integer $n$,  $\K^n$ is a Banach space.

Consider a (potentially non-convex) optimization problem
\begin{subequations}
\begin{eqnarray}
\underset{x}{\text{minimize}}  && f(x)
\label{eq:opt.a}\\
\text{\quad subject to}& &x\in\X
\label{eq:opt.b}
\end{eqnarray}
\label{eq:opt}
\end{subequations}
and its convex relaxation
\begin{subequations}
\begin{eqnarray}
\underset{x}{\text{minimize}}  && f(x)
\label{eq:optR.a}\\
\text{\quad subject to}& &x\in\XR.
\label{eq:optR.b}
\end{eqnarray}
\label{eq:optR}
\end{subequations}
Here $\X$ is a nonempty compact subset of $\K^n$, not necessarily convex, while
$\XR\subseteq\K^n$ is an arbitrary compact and convex superset of $\X$.
The cost function $f:\XR\to\Real$ is convex and continuous over $\XR$.
We do not require the relaxation $\XR$ to be efficiently represented.

\begin{definition}
A point $\xlo\in\X$ is called a \emph{local optimum} of \eqref{eq:opt} if there exists a $\delta>0$ such that 
$f(\xlo)\leq f(x)$ for all $x\in\X$ with $\|x-\xlo\|<\delta$.  
\end{definition}
\begin{definition}[Strong Exactness]\label{def:exact}
We say the relaxation \eqref{eq:optR} is \emph{exact} with respect to \eqref{eq:opt} if every optimal point of \eqref{eq:optR} is feasible, 
and hence globally optimal, for \eqref{eq:opt}.
\end{definition}

Unless otherwise specified, we will always use the term {\it exact} to refer to such strong exactness.
\cref{def:exact} implies in particular that, if  \eqref{eq:optR} is {exact}, then $\forall\hat{x}\in \XR\setminus\X$, $f(\hat{x})> \min_{x\in \XR} f(x)$.  

\begin{definition}
A \emph{path} in $\set\subseteq\K^n$ connecting point $a$ to point $b$ is a continuous function $h:[0,1]\rightarrow\set$ such that $h(0)=a$ and $h(1)=b$. 
\end{definition}

We may refer to a path as the corresponding function $h$ in the remainder of the paper.

\begin{lemma}
\label{thm:A=B}
The following are equivalent:
\begin{enumerate}[label=(\Alph*)]
\item Problem \eqref{eq:optR} is exact with respect to \eqref{eq:opt}.
\item For any $x\in\XR\setminus\X$, there is a path $h$ in $\XR$ such that $h(0)=x, h(1)\in\X$, $f(h(t))$ is non-increasing for $t\in[0,1]$ and $f(h(0))>f(h(1))$. 
\end{enumerate}
\end{lemma}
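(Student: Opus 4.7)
The plan is to prove the two implications separately, noting that (B)$\Rightarrow$(A) is essentially immediate from the definition of optimality, while (A)$\Rightarrow$(B) requires an explicit construction that exploits both the convexity of $f$ and of $\XR$.

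For (B)$\Rightarrow$(A), I would argue by contradiction. Let $\tilde{x}$ be any optimal point of \eqref{eq:optR} and suppose $\tilde{x}\in\XR\setminus\X$. By (B) there is a path $h$ in $\XR$ with $h(0)=\tilde{x}$ and $h(1)\in\X$ satisfying $f(h(1))<f(\tilde{x})$; since $h(1)\in\X\subseteq\XR$ this strictly beats the claimed optimum, a contradiction. Hence every optimizer of \eqref{eq:optR} lies in $\X$, which is exactly the condition of \cref{def:exact}.

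For (A)$\Rightarrow$(B), fix $x\in\XR\setminus\X$. Because $\XR$ is compact and $f$ continuous, \eqref{eq:optR} attains its minimum at some $x^*$, and by exactness $x^*\in\X$; by the remark following \cref{def:exact}, $f(x)>f(x^*)$. I propose the straight-line path $h(t)=(1-t)x+tx^*$, which stays in $\XR$ by convexity of $\XR$, joins $x$ to $x^*\in\X$, and along which $g(t):=f(h(t))$ is a continuous convex function on $[0,1]$ with $g(0)>g(1)$.

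The crux is checking $g$ is non-increasing on $[0,1]$. The extra ingredient beyond convexity of $g$ is that $x^*$ minimizes $f$ over \emph{all} of $\XR$, so $g(s)\geq g(1)$ for every $s\in[0,1]$. Suppose for contradiction some pair $0\leq s<t\leq 1$ has $g(s)<g(t)$. The case $t=1$ is ruled out directly by $g(s)\geq g(1)$. When $t<1$, writing $t=(1-\lambda)s+\lambda\cdot 1$ with $\lambda\in(0,1)$ and applying convexity of $g$ gives $g(t)\leq(1-\lambda)g(s)+\lambda g(1)$; combined with $g(s)<g(t)$ this yields $g(s)<g(1)$, contradicting the global-minimum bound. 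I expect the main pitfall, rather than a technical obstacle, to be the tempting but false inference that a convex function on $[0,1]$ with $g(0)>g(1)$ is automatically non-increasing (consider $g(t)=(t-1/2)^2-\varepsilon t$). The fix is precisely the observation above that $g(1)$ is a global lower bound along the whole segment, which eliminates the ``decrease-then-increase'' shape and makes the argument go through with no further work.
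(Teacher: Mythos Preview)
Your proof is correct and follows the same approach as the paper: for (B)$\Rightarrow$(A) you argue that no point of $\XR\setminus\X$ can be optimal, and for (A)$\Rightarrow$(B) you take the straight-line segment to a global optimizer $x^*$ of \eqref{eq:optR}. In fact your write-up is more careful than the paper's, which simply asserts the line segment works without spelling out why $f$ is non-increasing along it; that monotonicity argument (using convexity together with the fact that $g(1)$ is a global lower bound on the segment) only appears later in the paper, inside the proof of \cref{thm:suff}.
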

\begin{proof}
(A)$\implies$ (B):
Let $x^*$ be any optimal point of \eqref{eq:optR}. By (A), $x^*\in\X$, thus for $x\in\XR\setminus\X$, we could choose the path as the line segment from $x$ to $x^*$ since $\XR$ is convex.

(B)$\implies$ (A): Condition (B) implies that no point $x\in\XR\setminus\X$ can be optimal for \eqref{eq:optR}.
\end{proof}

\cref{thm:A=B} is not surprising, and in fact many works in the literature proving exact relaxations of Optimal Power Flow problems can be interpreted as using (B) to prove (A) by implicitly finding such a path $h$ for each $x\in\XR\setminus\X$ \cite{low2014convex}.

Condition (B) does not say anything about the local optima in $\X$ for \eqref{eq:opt}. 
In the next section we will strengthen (B) by equipping the path with a Lyapunov-like function
and show that the stronger condition implies that all local optima of \eqref{eq:opt} are globally optimal.
We start by classifying local minima.

\begin{definition}
We classify each local optimum $\xlo$ of \eqref{eq:opt} into three disjoint classes: $\xlo$ is a
\begin{itemize}
\item {\it Global optimum (g.o.)} if $f(\xlo)\leq f(x)$ for all the feasible $x\in \X$.
\item {\it Pseudo local optimum (p.l.o.)} if there is a path $h:[0,1]\rightarrow \X$ such that $h(0)=\xlo$, $f(h(t))\equiv f(\xlo)$ for all $t\in[0,1]$ and $h(1)$ is not a local optimum. 
\item {\it Genuine local optimum (g.l.o.)} if it is neither a global optimum nor a pseudo local optimum.
\end{itemize}
\end{definition}

Examples of all three classes are shown in \cref{fig:lo}. 
\begin{figure}[htbp]
\centering
\includegraphics[width=\columnwidth]{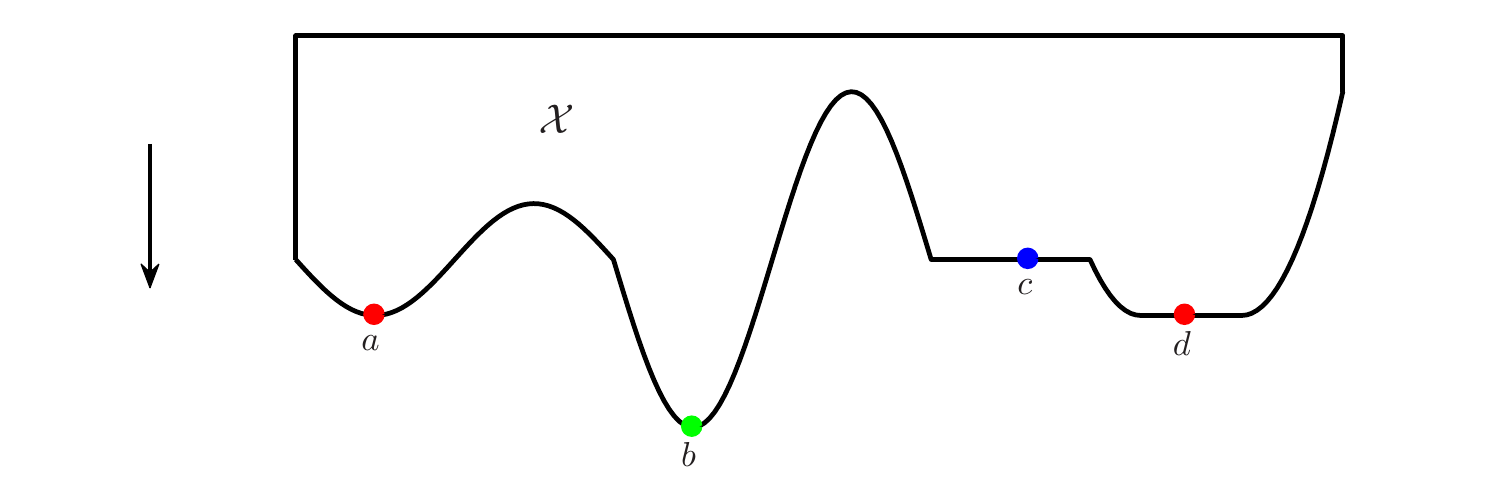}
\caption{Examples for three classes of local optima. The arrow indicates the direction along which the cost function decreases. Point $b$ is a global optimum, point $c$ is a pseudo local optimum, while points $a$ and $d$ are genuine local optima.}
\label{fig:lo}
\end{figure}

\begin{definition}
\label{def:improvable}
A point $x$ is {\it improvable} in $\X$ if there is a path $h:[0,1]\rightarrow \X$ such that
\begin{itemize}
\item $h(0)=x$;
\item $f(h(t))$ is non-increasing for $t\in[0,1]$;
\item $h(1)$ is not a local optimum or $f(h(1))<f(x)$.
\end{itemize}
\end{definition}
\begin{remark}
A local optimum is a pseudo local optimum if and only if it is improvable in $\X$.
\end{remark}
\begin{definition}
A set $\{h_i:i\in\mathcal{I}\}$ of paths indexed by $i$ is said to be \emph{uniformly bounded} if
there is a finite number $M$ such that $\|h_i(t)\|_\infty \leq M$ for every $i\in\mathcal{I}$ and $t\in[0,1]$.

\end{definition}
\begin{definition}
A set $\{h_i:i\in\mathcal{I}\}$ of paths indexed by $i$ is said to be \emph{uniformly equicontinuous} if 
for any $\epsilon>0$, there exists a $\delta>0$ such that
$\|h_i(t_1)-h_i(t_2)\|_\infty< \epsilon$ for every $i\in\mathcal{I}$ whenever $|t_1-t_2|<\delta$.
\end{definition}
\begin{remark}\label{rmk:linear}
The index set $\mathcal{I}$ could be empty or uncountably infinite. An empty path set (i.e., when $\mathcal{I}=\emptyset$) is considered to be both uniformly bounded and uniformly equicontinuous.
\end{remark}

Let $\Pi|_a^b$ be the family of all the finite ordered subsets of $[a,b]$. 
We use $\Pi$ as a shorthand for $\Pi|_0^1$. 
For $\pi=(t_0,\cdots,t_N)\in\Pi$ and a path $h$, define 
\begin{align*}
L_{\pi}(h):=\sum_{i=1}^N \|h(t_{i-1})-h(t_i)\|_{\ell_2}.
\end{align*}
Clearly, $L_{\pi}(h)$ is always finite for given $\pi$ and $h$.
\begin{definition}[\cite{toponogov2006differential}]
For path $h$, define the function
$L(h):=\sup_{\pi\in\Pi}L_\pi(h)$.
We say $h$ is rectifiable iff $L(h)$ is finite.
When $h$ is rectifiable, $L(h)$ is also referred to as its length.
\end{definition}
\begin{definition}[\cite{toponogov2006differential}]
For a rectifiable path $h:[0,1]\to\K^n$, let its {\it arc-length reparameterization} be $\bh:[0,1]\to\K^n$ and
\begin{align*}
\left\{
\begin{array}{ll}
\bh\Big(
\frac{1}{L(h)}\sup_{\pi\in\Pi|_0^t}L_\pi(h)
\Big):=h(t), & \text{if $L(h)>0$}\\
\bh:=h, & \text{if $L(h)=0$}
\end{array}
\right.
\end{align*}
\end{definition}
One could see $L(\bh)=L(h)<\infty$ and they have the same function image, i.e., $\{\bh(t)|t\in[0,1]\}=\{h(t)|t\in[0,1]\}$. 
For $0\leq t_1\leq t_2 \leq 1$, $\bh$ has the property that $\sup_{\pi\in\Pi|_{t_1}^{t_2}}L_\pi(\bh)=(t_2-t_1)L(\bh)$.
\begin{lemma}\label{lm:rect_equicon}
For a set of rectifiable paths $h_i, i\in\setI$, if the values of $L(h_{i})$ are uniformly bounded, then the set of $\bh_i, i\in\setI$ is uniformly equicontinuous.
\end{lemma}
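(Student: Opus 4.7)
\medskip

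\noindent\textbf{Proof plan.} The plan is to exploit the fact, already stated right before the lemma, that the arc-length reparameterization $\bh_i$ satisfies $\sup_{\pi\in\Pi|_{t_1}^{t_2}}L_\pi(\bh_i) = (t_2-t_1)L(\bh_i) = (t_2-t_1)L(h_i)$ for any $0\le t_1\le t_2\le 1$. This immediately makes each $\bh_i$ Lipschitz in a quantitative way, and when $L(h_i)$ is uniformly bounded the Lipschitz constants are uniform as well.

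Concretely, I would proceed as follows. First, let $M$ be a common upper bound for all the lengths $L(h_i)$, which exists by assumption. Second, fix any $i\in\setI$ and any $0\le t_1\le t_2 \le 1$. The two-point partition $\pi=(t_1,t_2)$ belongs to $\Pi|_{t_1}^{t_2}$, so
\begin{align*}
\|\bh_i(t_1)-\bh_i(t_2)\|_{\ell_2} \;=\; L_\pi(\bh_i) \;\le\; \sup_{\pi'\in\Pi|_{t_1}^{t_2}} L_{\pi'}(\bh_i) \;=\; (t_2-t_1)\,L(h_i) \;\le\; M\,|t_2-t_1|.
\end{align*}
Since $\|\cdot\|_\infty \le \|\cdot\|_{\ell_2}$ on $\K^n$, this yields $\|\bh_i(t_1)-\bh_i(t_2)\|_\infty \le M\,|t_2-t_1|$, uniformly in $i$.

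Third, given any $\epsilon>0$, choose $\delta := \epsilon/(M+1)$ (the $+1$ just to handle the degenerate case $M=0$ cleanly, and to also cover the vacuous case $\setI=\emptyset$ consistently with \cref{rmk:linear}). Then for every $i\in\setI$ and every $t_1,t_2\in[0,1]$ with $|t_1-t_2|<\delta$, the bound above gives $\|\bh_i(t_1)-\bh_i(t_2)\|_\infty < \epsilon$, which is exactly uniform equicontinuity.

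I do not anticipate a real obstacle: the heart of the argument is the Lipschitz-with-constant-$L(h_i)$ property of arc-length reparameterization, which is already explicitly recorded in the paragraph preceding the lemma. The only minor care is to (i) invoke that identity with the two-point partition rather than a general partition, (ii) pass from $\ell_2$ to $\ell_\infty$ using the standard norm inequality, and (iii) handle the edge cases $L(h_i)=0$ and $\setI=\emptyset$, both of which are subsumed by picking $\delta$ in terms of $M+1$ rather than $M$.
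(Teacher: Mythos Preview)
Your proposal is correct and follows essentially the same approach as the paper: bound $\|\bh_i(t_1)-\bh_i(t_2)\|_{\ell_2}$ by the arc-length identity $\sup_{\pi\in\Pi|_{t_1}^{t_2}}L_\pi(\bh_i)=(t_2-t_1)L(h_i)$, pass to the $\ell_\infty$ norm, and pick $\delta$ proportional to $\epsilon/M$. The paper uses $\delta=\epsilon/M$ and does not explicitly address the degenerate cases $M=0$ or $\setI=\emptyset$, so your use of $\epsilon/(M+1)$ is a minor tidiness improvement.
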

\begin{proof}
Assume $L(h_{i})\leq M$ for all $i\in\setI$, then for any $0\leq t_1 \leq t_2\leq1$, we have for any $i$,
\begin{align*}
&\|\bh_i(t_1)-\bh_i(t_2)\|_{\infty}
\leq
\|\bh_i(t_1)-\bh_i(t_2)\|_{\ell_2}\\
\leq
&\sup_{\pi\in\Pi|_{t_1}^{t_2}}L_\pi(\bh_i)
=(t_2-t_1)L(h_i)
\leq M|t_1-t_2|.
\end{align*}
Setting $\delta=\epsilon/M$, the equicontinuity is proved.
\end{proof}

\begin{corollary}\label{cor:linear}
If $\set$ is compact in $\K^n$ and all paths in a set $\mathcal{H}=\{h_i:i\in\mathcal{I}\}$ are $[0,1]\to\set$ and consist of at most $N$ linear segments, then $\{\overline{h_i}:i\in\mathcal{I}\}$ must be both uniformly bounded and uniformly equicontinuous.
Here $N$ is a finite constant for all paths in $\mathcal{H}$.
\end{corollary}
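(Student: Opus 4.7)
The plan is to reduce the corollary to \cref{lm:rect_equicon} plus a direct boundedness argument, using compactness of $\set$ to control both the image of each path and the total length of each piecewise-linear path.

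First I would dispose of uniform boundedness. Since $\set$ is compact in $\K^n$, it is bounded in the $\ell_\infty$ norm by some constant $M_1<\infty$. Because $h_i:[0,1]\to\set$ and the arc-length reparameterization $\bh_i$ has the same image as $h_i$, we get $\|\bh_i(t)\|_\infty\leq M_1$ for every $i\in\setI$ and every $t\in[0,1]$. This handles uniform boundedness immediately.

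Next I would address uniform equicontinuity by invoking \cref{lm:rect_equicon}, so it suffices to produce a uniform bound on the lengths $L(h_i)$. Here I use the piecewise-linear structure: each $h_i$ is a concatenation of at most $N$ linear segments whose endpoints lie in $\set$. The length of any linear segment between two points of $\set$ is at most the $\ell_2$-diameter $D:=\sup_{x,y\in\set}\|x-y\|_{\ell_2}$, which is finite by compactness. Summing over the at most $N$ segments gives $L(h_i)\leq ND$ uniformly in $i$. Formally, one can verify this by observing that on a single linear segment from $a$ to $b$, any partition $\pi$ satisfies $L_\pi\leq\|a-b\|_{\ell_2}$ by the triangle inequality, so the supremum over partitions equals $\|a-b\|_{\ell_2}\leq D$; then $L(h_i)$ is bounded by the sum of the lengths of the at most $N$ pieces.

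With $L(h_i)\leq ND$ uniformly, \cref{lm:rect_equicon} gives uniform equicontinuity of $\{\bh_i:i\in\setI\}$, completing the proof. I do not anticipate any real obstacle: the only mildly technical point is the reduction of the total length to a sum of segment lengths, which follows by splitting any partition $\pi\in\Pi$ at the breakpoints of the piecewise-linear path and applying the triangle inequality within each segment. The empty-index case $\setI=\emptyset$ is covered by \cref{rmk:linear}.
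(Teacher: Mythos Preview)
Your proposal is correct and matches the paper's intended argument: the paper states \cref{cor:linear} as an immediate corollary of \cref{lm:rect_equicon} without giving a separate proof, so the implicit reasoning is precisely the compactness-plus-diameter bound $L(h_i)\leq N\cdot\mathrm{diam}(\set)$ that you spell out.
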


\section{Sufficient Conditions}
\label{sec:suff}

In this section, we first study the sufficient conditions under which \eqref{eq:optR} is exact w.r.t. \eqref{eq:opt} and all the local optima of \eqref{eq:opt} are also globally optimal.
Those sufficient conditions will be proposed by strengthening Condition (B).
Note that (B) has already implied \eqref{eq:optR} is exact w.r.t. \eqref{eq:opt},
so our strategy is to strengthen (B) in order to rule out the possibility of genuine local optima and pseudo local optima.
\subsection{Ruling Out Genuine Local Optima}

\begin{definition}\label{def:Lyapunov}
A \emph{Lyapunov-like function}
\footnote{In contrast to a standard Lyapunov function, we do not require $V$ to be differentiable here.}
associated with \eqref{eq:opt} and \eqref{eq:optR} is a continuous function $V:\XR\rightarrow\Real^+$ such that
$V(x)=0$ for $x\in\X$ and $V(x)>0$ for $x\in\XR\setminus\X$.
\end{definition}

A strengthened version of (B) is as follows.
\begin{enumerate}[label=(\Alph*)]
\setcounter{enumi}{2}
\item There exists a Lyapunov-like function $V$ associated with \eqref{eq:opt} and \eqref{eq:optR} such that:
\begin{enumerate}[label=(C\arabic*)]
\item\label{C1} For any $x\in\XR\setminus\X$, there is a path $h_x$ in $\XR$ such that $h_x(0)=x, h_x(1)\in\X$, both $f(h_x(t))$ and $V(h_x(t))$ are non-increasing for $t\in[0,1]$ and $f(h_x(0))>f(h_x(1))$. 
\item\label{C2} The set $\{h_x\}_{x\in\XR\setminus\X}$ is uniformly bounded and uniformly equicontinuous.
\end{enumerate}
\end{enumerate}

\begin{theorem}\label{thm:suff}
If (C) holds, then (A) also holds and any local optimum in $\X$ for \eqref{eq:opt} is either a global optimum or a pseudo local optimum.
\end{theorem}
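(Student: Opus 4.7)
The plan is to split the theorem into two claims. Claim (A) is immediate: condition \cref{C1} entails condition (B) of \cref{thm:A=B} (it is the same statement with an additional requirement that $V$ be non-increasing along the path), so strong exactness follows directly from that lemma. The real work is to show that every local optimum $\xlo\in\X$ of \eqref{eq:opt} is either a global optimum or a pseudo local optimum, and I plan to argue this by contraposition: assume $\xlo$ is a local optimum that is not globally optimal, and construct a path in $\X$ certifying $\xlo$ is pseudo.

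The first step is to produce a useful comparison point. Since the relaxation is exact, I can choose $x^*\in\X$ with $f(x^*)<f(\xlo)$. The key geometric object is the line segment $\gamma(s):=(1-s)\xlo+s\,x^*$, which lies in the convex set $\XR$ and by convexity of $f$ satisfies $f(\gamma(s))<f(\xlo)$ for every $s\in(0,1]$. I first dispose of an easy subcase: if $\gamma(s_n)\in\X$ along some sequence $s_n\to 0^+$, then $\gamma(s_n)\to\xlo$ in $\X$ with strictly smaller cost, contradicting local optimality. So I may reduce to the case where $\gamma(s)\in\XR\setminus\X$ for all small $s>0$, and condition \cref{C1} then supplies a path $h_{\gamma(s)}$ from $\gamma(s)$ back into $\X$ along which both $f$ and $V$ are non-increasing.

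Next, I would fix any sequence $s_n\to 0^+$ and invoke Arzel\`a--Ascoli on the family $\{h_{\gamma(s_n)}\}_n$, which is uniformly bounded and uniformly equicontinuous by \cref{C2}, to extract a subsequence converging uniformly to some continuous path $h_\infty:[0,1]\to\XR$. The monotonicity of $V$ and of $f$ along each $h_{\gamma(s_n)}$ survives the uniform limit by continuity of $V$ and $f$; combined with $V(h_\infty(0))=V(\xlo)=0$ and the non-negativity of $V$ from \cref{def:Lyapunov}, this forces $V\equiv 0$ along $h_\infty$, so the whole image of $h_\infty$ lies in $\X$. With $h_\infty$ a path in $\X$ starting at $\xlo$ along which $f$ is non-increasing, I would set $\tau:=\sup\{t\in[0,1]:f(h_\infty(t))=f(\xlo)\}$ and close by a short case split. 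If $\tau<1$, then for $t$ just above $\tau$ one has $f(h_\infty(t))<f(\xlo)$, so $h_\infty(\tau)$ is not a local optimum and the sub-path $t\mapsto h_\infty(\tau t)$ is the pseudo certificate (note $\tau>0$, since $\tau=0$ would itself contradict local optimality of $\xlo$). If $\tau=1$, then $f\equiv f(\xlo)$ along $h_\infty$, but the endpoint $h_\infty(1)$ is a limit of points $h_{\gamma(s_n)}(1)\in\X$ whose costs are bounded by $f(\gamma(s_n))<f(\xlo)$, so the endpoint is still not a local optimum and $h_\infty$ itself is the required path.

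The main obstacle I anticipate is the limit-taking step, and specifically propagating both monotonicities to $h_\infty$ while keeping its entire image inside $\X$. Uniform convergence rather than mere pointwise convergence is essential, which is why \cref{C2} is stated as it is, and continuity of $V$ is precisely what lets me conclude $V\circ h_\infty\equiv 0$ from its value at $t=0$. Without either ingredient, $h_\infty$ could wander outside $\X$ or lose its cost-monotonicity, and the pseudo-local-optimum certificate would collapse.
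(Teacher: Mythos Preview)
Your proposal is correct and follows essentially the same approach as the paper: both take the line segment from $\xlo$ toward a lower-cost point, apply \ref{C1} to a sequence of segment points in $\XR\setminus\X$ converging to $\xlo$, extract a uniform limit path via Arzel\`a--Ascoli using \ref{C2}, and use continuity of the Lyapunov-like function together with $V(\xlo)=0$ to force the entire limit path into $\X$. The only cosmetic differences are that you obtain the strict inequality $f(\gamma(s))<f(\xlo)$ directly from convexity (the paper uses a slightly longer monotonicity argument with $x^*$ a global optimum) and that you certify the pseudo local optimum by a case split on $\tau$, whereas the paper routes through the improvability notion of \cref{def:improvable}.
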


\begin{figure}
\centering
\vspace*{0.5em}
\includegraphics[width=0.9\columnwidth]{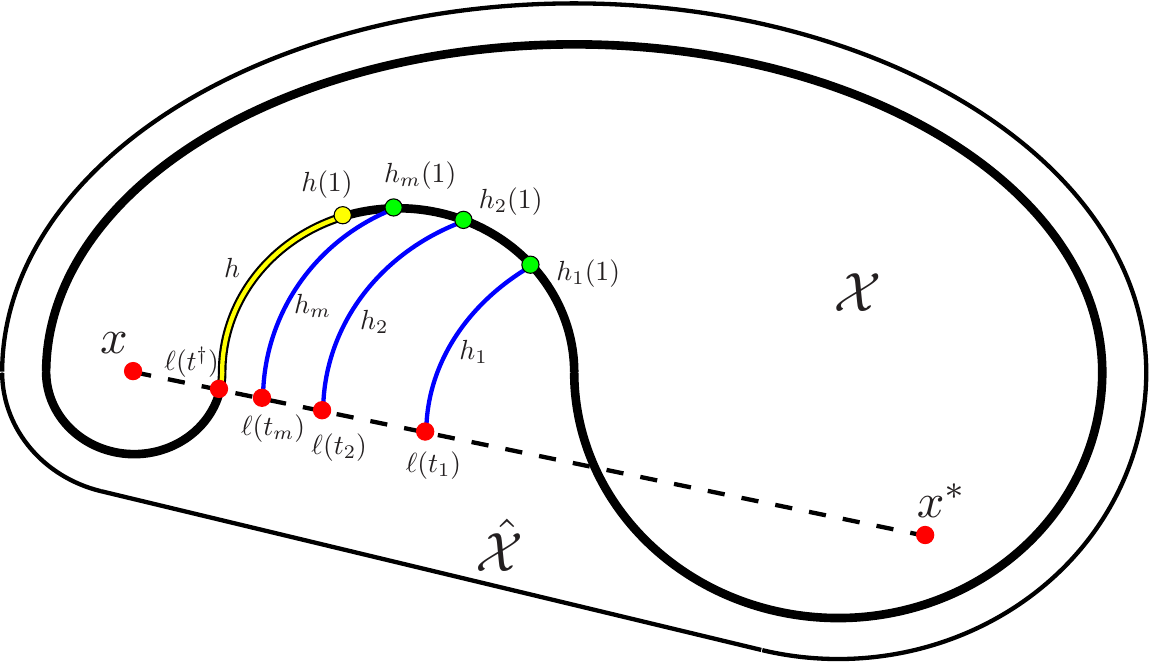}
\caption{Sketch of notations for the proof of \cref{thm:suff}. Point $x$ and $\ell(\tlim)$ will be later proved to be identical to each other.}
\label{fig:pf}
\end{figure}

\begin{proof}
(C) $\implies$ (A) is because (C) is stronger than (B).
As for the second part of the argument,
we include an illustrative sketch of the notations in \cref{fig:pf}.
Suppose $x\in\X$ is a local but not global optimum for \eqref{eq:opt}. We will prove that $x$ must be improvable in $\X$ (and thus a pseudo local optimum).

Let $x^*\neq x$ be a global optimum of \eqref{eq:opt}, so $f(x^*) < f(x)$.
Let $\ell:[0,1]\rightarrow\XR$ be the linear function characterizing the line segment from $x$ to $x^*$, i.e., $\ell(t) = (1-t)x + tx^*$
with $f(\ell(1))=f(x^*)<f(x)$.
Note that $f(\ell(t))$ is non-increasing in $t$.  To see this, consider any $t\geq 0,\epsilon>0$ with $t+\epsilon\leq 1$,
$x_1 = \ell(t)$, $x_2 = \ell(t+\epsilon)$.  Setting $s:=\epsilon/(1-t)$, we have $x_2 = (1-s)x_1 + s x^*$.
Since  $f$ is convex and $x^*$ is also a global optimum of \eqref{eq:optR} over $\XR$, 
we have
\begin{align*}
f(x_2) &\ \leq \ (1-s)f(x_1) + sf(x^*) \ \leq \ f(x_1).
\end{align*}
Define
\begin{align*}
\tlim:=&\sup_{t\in[0,1]} t\ \ \text{s. t.}~ \ell(\tau)\in\X\ \ \forall \tau\leq t.
\end{align*}
As $\X$ is closed, $\ell(\tlim)$ is also in $\X$. 
We first prove $\ell(\tlim)$ must be $x$ (i.e., $\tlim=0$).
Otherwise, as $x$ is a local optimum, we could find $\delta\in(0,\tlim)$ such that $f(\ell(t))\geq f(\ell(0))=f(x)$ for all $t\in[0,\delta)$. 
Since $f(\ell(t))$ is non-increasing in $t$, we must have $f(\ell(t))\equiv f(\ell(0))=f(x)$ for all $t\in[0, \delta)$.
It contradicts the fact that $f(\ell(t))$ is convex and $f(\ell(1))=f(x^*)<f(x)=f(\ell(0))$ for the same reason $f$ is non-increasing in $t$.

Therefore $\ell(\tlim)=x$ and $f(\ell(\tlim))=f(x)$.
It is sufficient to show $\ell(\tlim)$ is improvable in $\X$.
That is to say, it is sufficient to find some function $h:[0,1]\rightarrow\X$ such that $h(0)=\ell(\tlim)$, $f(h(t))$ is non-increasing in $t\in[0,1]$ and either $f(h(1))<f(\ell(\tlim))$ or $h(1)$ is not a local optimum in $\X$ for \eqref{eq:opt}. 

By the definition of $\tlim$, there is a decreasing sequence $t_m \rightarrow \tlim$ such that $t_m\in(\tlim,1]$ and $\ell(t_m)\in\XR\setminus\X$ for all $m$.
Since $f(\ell(t))$ is non-increasing in $t$, the sequence $f(\ell(t_m))$ is non-decreasing in $m$ and $f(\ell(t_m))< f(\ell(\tlim))$.
\footnote{The strict inequality is due to the convexity of $f(\ell(t))$ and the fact that $f(\ell(1))<f(\ell(\tlim))$.}
For each $\ell(t_m)$ we take the function $h_m:[0,1]\rightarrow\XR$ guaranteed by Condition (C).
As the sequence $h_m$ is uniformly bounded and uniformly equicontinuous, a subsequence must uniformly converge to a limit $h$ by Arzel\`a-Ascoli theorem.
Without loss of generality, we denote this subsequence as $h_m$ as well.
Next we prove this $h$ satisfies all the properties in \cref{def:improvable}, implying the improvability of $x$.

To show $h(t)\in\X$ for any fixed $t\in[0,1]$, we consider the sequence $(V(h_m(t)):m\in\Zahlen)$. 
As $\XR$ is closed, we have $h(t)=\lim_{m\to\infty}h_m(t)\in\XR$.
Further consider $V$ is continuous and $V(h_m(t))\leq V(h_m(0))$, thus
\begin{align*}
0\leq &V(h(t)) = V(\lim_{m\to\infty}h_m(t)) = \lim_{m\to\infty}V(h_m(t))\\
\leq & \lim_{m\to\infty}V(h_m(0))
=\lim_{m\to\infty}V(\ell(t_m))=V(\ell(\tlim))=0.
\end{align*}
Hence $V(h(t))=0$ and $h(t)\in\X$.

To show $h(0)=\ell(\tlim)$, we consider
\begin{align*}
h(0)=\lim_{m\to\infty}h_m(0)=\lim_{m\to\infty}\ell(t_m)=\ell(\tlim).
\end{align*}

To show $f(h(t))$ is non-increasing, we take any $s,t\in[0,1]$ such that $s<t$. As $f$ is continuous, we have
\begin{align*}
f(h(s))&=\lim_{m\to\infty}f(h_m(s))\\
f(h(t))&=\lim_{m\to\infty}f(h_m(t))
\end{align*} 
and by Condition (C) we have $f(h_m(s))\geq f(h_m(t))$ for each $m$. Therefore $f(h(s))\geq f(h(t))$.

Finally, we will show if $f(h(1))=f(\ell(\tlim))$ then $h(1)$ must not be a local minimal in $\X$ for \eqref{eq:opt}. 
For each $m$, 
\begin{align*}
f(h_m(1))\leq f(h_m(0))=f(\ell(t_m))< f(\ell(\tlim)) = f(h(1))
\end{align*}
and $h_m(1)\in\X$.
Since the sequence $h_m(1)$ converges to $h(1)$ as $m\to\infty$, within any open neighborhood of $h(1)$ in $\X$, we could always find some $h_m(1)$ with strictly smaller cost value.
Thus $h(1)$ cannot be a local minimum in $\X$.
\end{proof}

\subsection{Ruling Out Pseudo Local Optima}
So far, Condition (C) has eliminated the possibility of having genuine local optima, and in this subsection we further strengthen the condition to also rule out pseudo local optima.

Consider the following lemma and its corollaries.
\begin{lemma}\label{lm:connectedness}
If \eqref{eq:opt} is exact with respect to \eqref{eq:optR} and \eqref{eq:opt} has no genuine local optima, then the feasible set of \eqref{eq:opt} is connected.
\end{lemma}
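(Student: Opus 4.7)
My plan is to argue by contradiction: assume the feasible set $\X$ is disconnected, so $\X = \X_1 \cup \X_2$ with $\X_1, \X_2$ nonempty, disjoint, and closed (hence compact) subsets of $\K^n$. Compactness and disjointness give $d(\X_1,\X_2) > 0$, which means each $\X_i$ is also open in $\X$ in the subspace topology; consequently, any minimizer of $f$ on a single component is automatically a local optimum of \eqref{eq:opt}. Let $x_i^* \in \argmin_{x\in\X_i} f(x)$ and $m_i := f(x_i^*)$, and assume without loss of generality that $m_1 \leq m_2$.

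I would then split into two cases according to whether the component-wise minima agree. If $m_1 < m_2$, then $x_2^*$ is a local optimum that is not global, so by hypothesis it must be a pseudo local optimum: there is a path $h:[0,1]\to\X$ with $h(0) = x_2^*$, $f\circ h \equiv m_2$, and $h(1)$ not a local optimum. Since $h$ is continuous and $h(0)\in\X_2$, the image $h([0,1])$ stays in the connected component $\X_2$, so $f(h(1)) = m_2 = \min_{x\in\X_2} f(x)$. Using $d(\X_1,\X_2)>0$, a small ball around $h(1)$ meets $\X$ only inside $\X_2$, and therefore $h(1)$ is itself a local optimum of \eqref{eq:opt} --- a contradiction.

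If $m_1 = m_2 =: m$, both $x_1^*$ and $x_2^*$ are global optima of \eqref{eq:opt}, hence (by exactness) also of \eqref{eq:optR}, and $m = \min_{x\in\XR} f(x)$. I would look at the line segment $\gamma(t) := (1-t)x_1^* + tx_2^* \in \XR$, which by convexity of $f$ satisfies $f(\gamma(t)) \leq m$ and by optimality of $m$ over $\XR$ satisfies $f(\gamma(t)) \geq m$. Thus every $\gamma(t)$ is optimal for \eqref{eq:optR}, and \cref{def:exact} forces $\gamma(t)\in\X$ for all $t\in[0,1]$. But then $\gamma$ is a continuous path in $\X$ joining $\X_1$ and $\X_2$, again contradicting disconnectedness. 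Both cases being impossible, $\X$ must be connected.

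The main subtlety I anticipate is not a computational one but rather verifying that neither hypothesis alone suffices. Exactness does the work in the equal-minima case by pushing an entire segment of relaxed optimizers back into $\X$, while the absence of genuine local optima does the work in the unequal-minima case by forcing the secondary minimizer to be pseudo, whose constant-cost path is then unable to escape its own connected component. Once this division of labor is in place the rest of the argument is essentially bookkeeping with the separation distance $d(\X_1,\X_2) > 0$ and the definition of pseudo local optima.
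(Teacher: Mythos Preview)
Your proof is correct and follows essentially the same approach as the paper's: both argue by contradiction, split into the cases $m_1=m_2$ and $m_1\neq m_2$, use exactness of the relaxation on the line segment in the equal case, and use the pseudo-local-optimum definition together with the clopen-ness of $\X_2$ in $\X$ in the unequal case. The only cosmetic difference is that the paper phrases the unequal case as ``$x_2^*$ cannot be pseudo, hence is genuine,'' whereas you phrase it as ``$x_2^*$ must be pseudo, which leads to a contradiction''; these are logically equivalent.
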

\begin{proof}
If $\X$ is not connected, then by definition $\X$ can be partitioned into two disjoint non-empty closed sets $\X_1$ and $\X_2$
with $\X = \X_1 \cup \X_2$, which are hence both compact.
Further we let $x_i$ be any global optimum of $\min_{x\in\X_i}f(x)$ for $i=1,2$. 
Clearly $x_1\neq x_2$ and they are both local optima of \eqref{eq:opt}.

If $f(x_1)=f(x_2)$, then any convex combination of $x_1$, $x_2$ must be a global optimum to \eqref{eq:optR}. Since there is no path in $\X$ that connects $x_1$ and $x_2$, there must be some convex combination that is outside $\X$. This contradicts the exactness of relaxation.

If $f(x_1)\neq f(x_2)$, without loss of generality we assume $f(x_1)< f(x_2)$, i.e., $x_2$ is not a global optimum of \eqref{eq:opt}.
But $x_2$ is not a pseudo local optimum of \eqref{eq:opt} either, contradicting \cref{thm:suff}.
To see this, note that any point $x'\in\X$ which is connected to $x_2$ via a path in $\X$ must also be a point in $\X_2$ and if $f(x')=f(x_2)$ then $x'$ must be a local optimum of \eqref{eq:opt} as well. 
\end{proof}

\begin{corollary}
Condition (C) implies that the feasible set of \eqref{eq:opt} is connected.
\end{corollary}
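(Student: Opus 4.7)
The corollary is a short combination of two results already in hand, namely \cref{thm:suff} and \cref{lm:connectedness}, so my plan is to simply verify that the hypotheses of \cref{lm:connectedness} follow from Condition (C) and invoke it.

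First, I would observe that by \cref{thm:suff}, Condition (C) implies Condition (A), i.e., that the relaxation \eqref{eq:optR} is exact with respect to \eqref{eq:opt}. Second, \cref{thm:suff} also asserts that under (C) every local optimum of \eqref{eq:opt} is either a global optimum or a pseudo local optimum; equivalently, \eqref{eq:opt} admits no genuine local optima. These are exactly the two hypotheses required by \cref{lm:connectedness}.

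Finally, I would apply \cref{lm:connectedness} directly to conclude that $\X$ is connected. No new estimates, no new path constructions, and no delicate compactness arguments are needed beyond those already used in \cref{thm:suff}.

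There is essentially no obstacle here: the work has been absorbed into \cref{thm:suff} (which handles ruling out genuine local optima via the Arzel\`a--Ascoli argument on the paths $h_x$ guaranteed by \ref{C1}--\ref{C2}) and into \cref{lm:connectedness} (which handles the partition-and-convex-combination argument). The only subtlety worth flagging in the write-up is that \cref{thm:suff} rules out genuine local optima but not pseudo local optima, and that is precisely what \cref{lm:connectedness} needs, so the two results fit together cleanly.
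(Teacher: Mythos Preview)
Your proposal is correct and matches the paper's approach exactly: the paper states this corollary immediately after \cref{lm:connectedness} without proof, precisely because it is the straightforward combination of \cref{thm:suff} (which supplies both exactness and the absence of genuine local optima) with \cref{lm:connectedness}.
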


Now we are in a good position to discuss some conditions that rule out pseudo local optima and therefore guarantee that any local optimum must be a global optimum.
\begin{corollary}
If all local optima of \eqref{eq:opt} are isolated, then Condition (C) implies that any local optimum of \eqref{eq:opt} is a global optimum.\end{corollary}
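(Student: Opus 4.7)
The plan is a proof by contradiction built directly on top of \cref{thm:suff}. Suppose some local optimum $\xlo$ of \eqref{eq:opt} is not global. Then \cref{thm:suff}, applied under Condition (C), rules out genuine local optima, so $\xlo$ must be a pseudo local optimum. By definition this furnishes a path $h:[0,1]\to\X$ with $h(0)=\xlo$, $f(h(t))\equiv f(\xlo)$ on $[0,1]$, and $h(1)$ not a local optimum. Invoking isolation, I then fix $\delta>0$ small enough that $f(y)\geq f(\xlo)$ for every $y\in\X$ with $\|y-\xlo\|<\delta$ and that $\xlo$ is the unique local optimum of \eqref{eq:opt} inside the open ball $B(\xlo,\delta)$. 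The goal becomes to manufacture a second local optimum inside this same ball, contradicting isolation.

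To locate such a candidate, I would set $s^{*}:=\sup\{s\in[0,1]:h(\tau)=\xlo\text{ for all }\tau\in[0,s]\}$. If $s^{*}=1$ then $h(1)=\xlo$ is itself a local optimum, which violates the defining property of a pseudo local optimum; hence $s^{*}<1$. The definition of the supremum, combined with continuity of $h$ at $s^{*}$, then lets me extract a sequence $t_{n}\downarrow s^{*}$ with $h(t_{n})\neq\xlo$ and $h(t_{n})\to\xlo$, so in particular $h(t_{n})\in B(\xlo,\delta)$ for all sufficiently large $n$. Since $B(\xlo,\delta)$ is open, a small ball around each $h(t_{n})$ is still contained in $B(\xlo,\delta)$; on that smaller ball, every $y\in\X$ satisfies $f(y)\geq f(\xlo)=f(h(t_{n}))$ by the local minimality of $\xlo$. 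This certifies $h(t_{n})$ as a local optimum distinct from $\xlo$ and lying in $B(\xlo,\delta)$, contradicting isolation.

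The main obstacle I anticipate is the bookkeeping around the possibility that $h$ lingers at $\xlo$ for an initial subinterval before leaving: that is precisely why the argument uses the supremum $s^{*}$ rather than a naive limit as $t\downarrow 0$, and why one must invoke the pseudo local optimum clause $h(1)\notin\{\text{local optima}\}$ to rule out $s^{*}=1$. Apart from this, every remaining step is a routine continuity and local-minimality manipulation, so no tool beyond \cref{thm:suff} and the isolation hypothesis is needed.
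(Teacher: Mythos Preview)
Your proposal is correct and follows essentially the same route as the paper: invoke \cref{thm:suff} to reduce to the dichotomy ``global or pseudo local,'' then argue that an isolated local optimum cannot be pseudo local. The paper's own proof is a single sentence asserting this last implication without details; your argument with the supremum $s^{*}$ and the sequence $h(t_{n})$ supplies precisely the missing justification for why a pseudo local optimum would force non-isolated local optima nearby.
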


Here, local optima being isolated means any local optimum of \eqref{eq:opt} has an open neighborhood which contains no other local optimum.
The proof is straightforward as by definition isolated local optimum could not be pseudo local optimum.
In fact, in this case the optimum can be proved to be also unique.

Another way to eliminate pseudo local optima is by strengthening the monotonicity of $f(h_x(t))$ in Condition (C).
Consider the following condition which is slightly stronger than (C).

\begin{enumerate}[label=(\Alph*')]
\setcounter{enumi}{2}
\item Condition (C) holds, and there exists $k>0$ such that $\forall x\in\XR\setminus\X$, $\forall 0\leq t<s\leq 1$ we have
\begin{align}\label{eq:fh_lowerbound}
f(h_x(t))-f(h_x(s))\geq k \|h_x(t)-h_x(s)\|.
\end{align}
\end{enumerate}

In Condition (C'), $\|\cdot\|$ could be any norm on $\K^n$.
As a caveat, $\ell_0$-``norm'' is {\it not} allowed here as it is not a norm since it does not satisfy 
$\|\alpha x\| = |\alpha| \|x\|$.
Note that Condition (C) already implies $f(h_x(t))-f(h_x(s))\geq 0$,
while (C') strengthens this condition by enforcing a positive lower bound depending on $h_x$.

\begin{theorem}\label{co:lo_is_go}
If (C') holds, then any local optimum of \eqref{eq:opt} must be a global optimum.
\end{theorem}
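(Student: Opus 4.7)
The plan is to argue by contradiction and reuse the limit-path construction from the proof of \cref{thm:suff}, exploiting the new quantitative bound in (C'). Suppose $x\in\X$ is a local optimum that is not a global optimum. Exactly as in \cref{thm:suff}, I pick a global optimum $x^*$, form the line segment $\ell$ from $x$ to $x^*$, set $\tlim:=\sup\{t:\ell(\tau)\in\X\ \forall\tau\leq t\}$ so that $\ell(\tlim)=x$, and pick $t_m\downarrow\tlim$ with $\ell(t_m)\in\XR\setminus\X$. Condition (C') supplies paths $h_m$ starting at $\ell(t_m)$; by (C2) and Arzel\`a--Ascoli, a subsequence converges uniformly to a path $h:[0,1]\to\X$ with $h(0)=x$, $f\circ h$ non-increasing, and $f(h_m(1))\leq f(\ell(t_m))<f(x)$ for every $m$.

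The new ingredient is that the descent bound in (C') passes to the limit. For each $m$ and $0\leq t<s\leq 1$,
\[
f(h_m(t))-f(h_m(s))\;\geq\;k\,\|h_m(t)-h_m(s)\|,
\]
and uniform convergence of $h_m$ together with continuity of $f$ and of $\|\cdot\|$ gives the same inequality for $h$. The crucial consequence is a ``no plateau'' property: whenever $f(h(t))=f(h(s))$, one is forced to have $h(t)=h(s)$, so the limit path cannot remain on a level set of $f$ without collapsing to a single point.

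To finish, I use local optimality to choose $\delta>0$ with $f(x)\leq f(y)$ for all $y\in\X$ with $\|y-x\|<\delta$. Since each $h_m(1)\in\X$ satisfies $f(h_m(1))<f(x)$, necessarily $\|h_m(1)-x\|\geq\delta$; passing to the limit gives $\|h(1)-h(0)\|\geq\delta$, so the limit inequality applied with $(t,s)=(0,1)$ yields $f(h(1))\leq f(x)-k\delta<f(x)$. Now set $t^\star:=\max\{t\in[0,1]:f(h(t))=f(x)\}$, which is attained by continuity of $f\circ h$ and satisfies $t^\star<1$. Applying the limit inequality with $(t,s)=(0,t^\star)$ collapses the plateau: $h(t^\star)=h(0)=x$. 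But then for $t$ slightly past $t^\star$ we have $h(t)\in\X$, $f(h(t))<f(x)$, and $\|h(t)-x\|$ arbitrarily small by continuity of $h$, contradicting the local optimality of $x$.

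The main obstacle is structural rather than conceptual. I must verify that the pointwise (C') inequality really transfers to the uniform limit $h$ (which it does, from continuity of $f$ and of the norm), and then separate the two ways the limit path could descend from $x$: an immediate strict drop in $f\circ h$ off the start, versus a constant-cost plateau followed by a drop. Condition (C') eliminates the second possibility by identifying every would-be plateau with a single point, which is precisely what reduces the analysis to the small-neighborhood contradiction above and closes the gap left by \cref{thm:suff}.
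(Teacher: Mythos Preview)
Your proof is correct and follows essentially the same route as the paper: reuse the limit-path construction from \cref{thm:suff}, pass the quantitative bound \eqref{eq:fh_lowerbound} to the uniform limit $h$, and conclude that $h(0)=x$ cannot be a local optimum. The paper's version is considerably terser---after obtaining the limit inequality it simply says ``taking $t=0$ we conclude $h(0)$ is not a local optimum''---whereas you explicitly unpack this step via the $\delta$-neighborhood and the plateau point $t^\star$, which makes the argument more transparent (in particular, your treatment handles the case where $h$ is constant on an initial segment, which the paper leaves implicit).
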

\begin{proof}
Following the proof of \cref{thm:suff}, 
suppose $x\in\X$ is a local but not global optimum for \eqref{eq:opt}.
Then we have $x=\ell(\tlim)$ and could obtain a limit point of the sequence $h_m$, denoted as $h$.
Since both sides of \eqref{eq:fh_lowerbound} are continuous in $h_m(t)$ and $h_m(s)$, 
and the limits of $h_m(t)$ and $h_m(s)$ are $h(t)$ and $h(s)$,
we must have whenever $h(t)\neq h(s)$,
\begin{align*}
f(h(t))-f(h(s))\geq k \|h(t)-h(s)\|>0.
\end{align*}
Taking $t=0$ we can conclude that $h(0)$ (which is the same point as $x$) is not a local optimum of \eqref{eq:opt}.
\end{proof}
\section{Necessary Conditions}
In this section we will study the necessary conditions for a non-convex problem to have exact relaxation and no spurious local optima simultaneously.
It turns out the results are not exactly the converses of  \cref{thm:suff} or \cref{co:lo_is_go}, but in a slightly weaker sense.
Specifically, we show that if a non-convex problem is known to have exact relaxation and no spurious local optima simultaneously,
then the Lyapunov-like function and paths satisfying Condition (C) are guaranteed to exist.
However, it still may or may not be easy to find those functions or paths in practice for a specific problem.
\subsection{Results}
\begin{assumption}\label{as:semianalytic}
The feasible set $\X$ is semianalytic and the cost function $f$ is analytic.
\end{assumption}
We refer to \cite{bierstone1988semianalytic} for more detailed definitions and properties of semianalytic sets.
This assumption is not restrictive for most engineering problems.
If $\K$ is chosen as $\Complex$, then we suggest to view all the complex functions as functions of real variables by separating the real and imaginary parts,
and the space of $\Complex^n$ can be viewed as a shorthand for $\Real^{2n}$ in this section.

\begin{theorem}[necessary condition]\label{thm:nece}
If \eqref{eq:optR} is exact with respect to \eqref{eq:opt} and any local optimum of \eqref{eq:opt} is globally optimal, 
then there exists a Lyapunov-like function $V$ and a corresponding family of paths $\{h_x\}_{x\in\XR\setminus\X}$ satisfying \ref{C1} and \ref{C2}.
\end{theorem}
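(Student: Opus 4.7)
The plan is to take $V(x) := \mathrm{dist}(x, \X)$ as the Lyapunov-like function, which is $1$-Lipschitz continuous, vanishes on the closed set $\X$, and is strictly positive on $\XR \setminus \X$. Before building the paths, I would collect structural consequences of the two hypotheses. Exactness forces $\X^{\!*} := \argmin_{\XR} f$ to be nonempty and, as a level set of a convex function intersected with a convex set, to be compact and convex; moreover $\X^{\!*} \subseteq \X$. The no-spurious-local-optima hypothesis together with \cref{lm:connectedness} forces $\X$ to be connected, and every $y \in \X \setminus \X^{\!*}$ lies in the closure of the strict lower level set $\{z \in \X : f(z) < f(y)\}$ inside $\X$.

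For each $y \in \X$ I would next construct an \emph{interior descent} path $\gamma_y \subseteq \X$ from $y$ to some point of $\X^{\!*}$ with $f\circ\gamma_y$ non-increasing. \cref{as:semianalytic} is the essential tool here: by the Curve Selection Lemma for semianalytic sets, at each non-optimal $y \in \X$ there is a real-analytic arc in $\X$ emanating from $y$ along which $f$ strictly decreases. A \L ojasiewicz-type inequality for analytic $f$ on semianalytic $\X$ then provides a uniform length bound for reducing $f$ by a given amount, so these arcs concatenate into a rectifiable $\gamma_y$ whose length is bounded uniformly in $y$ using compactness of $\X$.

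For each $x \in \XR \setminus \X$ I would then construct an \emph{exterior descent} path $\sigma_x$ in $\XR$ from $x$ into $\X$ along which both $f$ and $V$ are non-increasing and which terminates at some $y$ with $f(y)\leq f(x)$. Two natural candidates illustrate the tension: the segment from $x$ to its nearest point in $\X$ has $V$ linearly decreasing (by the triangle inequality, as $V(z) = (1-t)V(x)$ along it) but $f$ may fail to be monotone, while the segment to a nearest point of $\X^{\!*}$ has $f$ non-increasing (by convexity of $f$ and $\X^{\!*}$ containing a minimizer) but $V$ may oscillate. I would resolve this by choosing $\sigma_x$ as a piecewise-analytic curve whose tangent at each point lies in the (nonempty) cone of directions that non-strictly decrease both $f$ and $V$; exactness implies such a cone is nonempty throughout $\XR\setminus\X$ because no point of $\XR\setminus\X$ is a local minimum of $f$ on $\XR$, and the semianalytic stratification of $\XR$ permits a constructive piecewise choice. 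The full path $h_x$ is the concatenation of $\sigma_x$ with $\gamma_{\sigma_x(1)}$, reparameterized on $[0,1]$.

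To obtain \ref{C2}, I would exploit the length bounds above: the total arc length of $h_x$ is bounded by a function of $x$ that is continuous on the compact set $\XR$, hence uniformly bounded, and \cref{lm:rect_equicon} applied to the arc-length reparameterizations $\overline{h_x}$ then yields uniform equicontinuity (and \emph{a fortiori} uniform boundedness since $\XR$ is compact). The main obstacle I expect is the construction of $\sigma_x$: forcing monotonicity of the non-convex function $V = \mathrm{dist}(\cdot,\X)$ along a path is subtle, because neither straight segments nor naive gradient flows on $f$ enforce it automatically. One must rule out `deadlock' configurations $z \in \XR\setminus\X$ at which every direction that decreases $f$ strictly increases $V$, and carry this non-emptiness uniformly across $x$ so as to produce a path family with the required equicontinuity; this is precisely where the combined force of exactness, no-spurious-local-optima, and the tameness afforded by \cref{as:semianalytic} must be brought to bear.
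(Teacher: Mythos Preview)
Your interior-descent construction (the paths $\gamma_y$ inside $\X$) is on the right track and mirrors the paper's \cref{lm:short_curve}, which uses the semianalytic curve-selection machinery in exactly the way you outline. The gap is in the exterior phase $\sigma_x$, and it is not just a technical wrinkle: with the choice $V(x)=\mathrm{dist}(x,\X)$ you cannot in general avoid the deadlock you yourself flag. Your justification---``exactness implies such a cone is nonempty because no point of $\XR\setminus\X$ is a local minimum of $f$''---establishes only that some direction decreases $f$, not that some direction simultaneously (even weakly) decreases both $f$ and $V$. A concrete instance: take $\XR$ the closed unit disk, $\X$ the unit circle, and $f(x)=(x_1-2)^2+x_2^2$; exactness and no-spurious-local-optima both hold. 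At $x=(-\tfrac12,0)$ one has $\nabla f$ and $\nabla V$ parallel and opposite, so the cone of joint weak descent is the tangent line $\{d_1=0\}$, along which neither $f$ nor $V$ strictly decreases. A path to $\X$ with both non-increasing does exist here (an arc on $|x|=\tfrac12$ followed by a radial segment), but your tangent-cone prescription does not produce it, and nothing in your argument rules out configurations where no such path exists at all.

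The paper sidesteps this by choosing $V$ \emph{adaptively} rather than fixing it as the Euclidean distance: it sets
\[
V(x)\ :=\ \inf\bigl\{\,L(h)\;:\;h(0)=x,\ h(1)\in\X,\ f(h(t))\ge f(h(1))\ \forall t\,\bigr\},
\]
and takes $h_x$ to be a minimizer. With this choice the monotonicity of $V\circ h_x$ is automatic by a dynamic-programming argument (the tail of an optimal path is optimal), and the monotonicity of $f\circ h_x$ follows from optimality via a shortcut argument (any increase of $f$ along $h_x$ could be replaced by a chord that strictly shortens the path). The semianalytic hypothesis is then used, much as you propose, to upgrade from weak descent to strict descent at the terminal point and to control lengths uniformly. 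In short, the difficulty you identify is real, and the paper's resolution is to let $V$ be determined by the paths rather than the other way around.
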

\begin{remark}
Note that \cref{thm:nece} is NOT the converse of \cref{thm:suff} in a strict sense. There are a few differences in their settings.
\begin{itemize}
\item  \cref{thm:suff} allows pseudo local optimum (in the conclusion) of the theorem, while  \cref{thm:nece} disallows it (in the premise).
\item  \cref{thm:nece} relies on  \cref{as:semianalytic} while  \cref{thm:suff} does not.
\end{itemize}
\end{remark}

\subsection{Proof Setup}
In the rest of the section, we will prove \cref{thm:nece}.
From now on, we assume \eqref{eq:optR} is exact with respect to \eqref{eq:opt} and any local optimum of \eqref{eq:opt} is also globally optimal.
We first have the following definition and lemmas, which are the main reasons we introduced \cref{as:semianalytic}.
\begin{definition}[Whitney regularity \cite{bierstone1988semianalytic, bierstone1980differentiable, hardt1983some}]
For a compact set $\setU\subset\K^n$ and a positive integer $p$, we say $\setU$ is {\it $p$-regular} if there exists $C>0$ such that $\forall x,y\in\setU$, $x$, $y$ can be joined by a rectifiable curve $h$ in $\setU$ satisfying $L(h)\leq C\|x-y\|^{1/p}$.
\end{definition}
\begin{lemma}[Theorem 6.10 in \cite{bierstone1988semianalytic}]\label{lm:whitney}
If $\setU$ is a compact connected subanalytic subset of $\K^n$, then there is a positive integer $p$ such that $\setU$ is $p$-regular and the curves can always be chosen semianalytic. 
\end{lemma}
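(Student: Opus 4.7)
The plan is to decompose $\setU$ into finitely many well-structured pieces via a semianalytic triangulation, establish the length bound on each piece, and then chain the pieces into a global bound with a single exponent. The crucial analytic input is the \L ojasiewicz inequality, which for subanalytic sets controls how ambient Euclidean distance relates to intrinsic path-length distance.

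First I would invoke the triangulation theorem for compact subanalytic sets (due to \L ojasiewicz in the semianalytic case and extended by Hironaka to the subanalytic one): $\setU$ admits a finite semianalytic triangulation into disjoint smooth semianalytic cells $\sigma_1,\dots,\sigma_N$, each diffeomorphic to an open simplex, whose closures form a simplicial incidence structure so that $\overline{\sigma_i}\cap\setU$ is a union of cells. The connectedness of $\setU$ translates into connectedness of the incidence graph of cells. Next, on each closed cell I would produce a local $p_i$-regularity bound: the \L ojasiewicz inequality applied to the semianalytic functions defining $\sigma_i$, combined with a standard construction of semianalytic geodesic-like curves inside a smooth semianalytic manifold, yields constants $C_i$ and $p_i$ such that any two points of $\overline{\sigma_i}$ are joined by a rectifiable semianalytic curve in $\overline{\sigma_i}$ of length at most $C_i\|x-y\|^{1/p_i}$. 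A finite covering argument on the compact set $\overline{\sigma_i}$ upgrades pointwise-local estimates into a uniform bound over the entire cell.

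Finally, for $x,y\in\setU$ lying in different cells, I would chain the local curves along a path in the incidence graph connecting their cells. When $\|x-y\|$ is small, the finiteness of the triangulation forces $x$ and $y$ to lie in the same or adjacent cells, so at most one boundary transition occurs and only one of the local exponents is involved. When $\|x-y\|$ is of the order of the diameter of $\setU$, the finite number of cells gives an absolute length bound, which is absorbed into the constant $C$ by choosing $p$ large enough. Setting $p:=\max_i p_i$ (possibly further enlarged to handle the diameter-scale case) and choosing $C$ accordingly yields the required uniform bound, with the curves remaining semianalytic because each piece is.

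The hardest step is the second one: producing a local $p_i$-regularity bound inside a single semianalytic cell. This is where the full Tarski-\L ojasiewicz structure theory for subanalytic sets enters, including (in spirit) Hironaka's resolution of singularities and the sharp \L ojasiewicz inequality relating $|f(x)|$ and $d(x,f^{-1}(0))$ for semianalytic $f$; this is the heart of the argument in the cited paper. Once those local bounds are in place, the subsequent chaining and uniformization across cells are essentially combinatorial, bounded by the finite complexity $N$ of the triangulation, and no further deep ingredient is needed.
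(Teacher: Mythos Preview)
The paper does not supply its own proof of this lemma: immediately after the statement it writes ``The proof of \cref{lm:whitney} can be found in \cite{bierstone1988semianalytic}'' and moves on. So there is no in-paper argument to compare your proposal against; the lemma is imported wholesale as Theorem~6.10 of Bierstone--Milman.

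That said, your outline is broadly in the right spirit for how Whitney regularity of compact subanalytic sets is actually established: a finite semianalytic stratification (or triangulation), a local length estimate on each piece coming from a \L ojasiewicz-type inequality, and a combinatorial chaining across strata. One caution: the step you flag as hardest really is the crux, and your description of it is a bit vague. Inside a single smooth semianalytic cell one does not get $p_i$-regularity just from ``the \L ojasiewicz inequality applied to the functions defining $\sigma_i$''; the subtlety is at the boundary of the cell, where the relevant semianalytic functions vanish and the geodesic distance in $\overline{\sigma_i}$ can blow up relative to Euclidean distance. The Bierstone--Milman argument handles this via the regular separation property for subanalytic sets and a careful induction on dimension (or, equivalently, via local flattening/rectilinearization), not simply by invoking \L ojasiewicz on defining functions. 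If you were to flesh this out, that is where the real work lies; the rest of your chaining argument is sound.
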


The proof of \cref{lm:whitney} can be found in \cite{bierstone1988semianalytic}.
Note that any semianalytic set is also subanalytic.

\begin{lemma}\label{lm:short_curve}
For any $x_0\in\X$ that is not a local optimum of \eqref{eq:opt} and for any $\epsilon>0$,
there exists a path $h$ in $\X$ such that $h(0)=x_0$, $f(h(t))$ is non-increasing in $t$, $f(h(1))<f(h(0))$ and $L(h)<\epsilon$.
\end{lemma}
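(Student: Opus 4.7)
The plan is to apply the curve selection lemma for semianalytic sets, upgrade the strict inequality it provides to strict monotonicity via analyticity, and control the length using the boundedness of the derivative of an analytic curve. Concretely, I would consider the set $U := \{y \in \X : f(y) < f(x_0)\}$. By \cref{as:semianalytic}, $\X$ is semianalytic and $f$ is analytic, so $U$ is the intersection of $\X$ with the open semianalytic set $\{y : f(y) - f(x_0) < 0\}$, and is therefore itself semianalytic. Because $x_0$ fails to be a local optimum of \eqref{eq:opt}, every open ball around $x_0$ meets $U$, so $x_0 \in \overline{U} \setminus U$.

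The curve selection lemma for semianalytic sets \cite{bierstone1988semianalytic} then yields $\delta_0 > 0$ and a real-analytic map $\gamma : [0,\delta_0) \to \K^n$ with $\gamma(0) = x_0$ and $\gamma(t) \in U \subseteq \X$ for every $t \in (0,\delta_0)$. The composition $\phi(t) := f(\gamma(t))$ is then real analytic on $[0,\delta_0)$ with $\phi(0) = f(x_0)$ and $\phi(t) < f(x_0)$ on $(0,\delta_0)$. Writing the Taylor expansion $\phi(t) - f(x_0) = \sum_{k \geq k_0} a_k t^k$ with $a_{k_0} \neq 0$ (nonconstancy follows from the strict inequality for $t>0$), the one-sided inequality forces $a_{k_0} < 0$, so $\phi'(t) = k_0 a_{k_0} t^{k_0 - 1} + O(t^{k_0})$ is strictly negative on some $(0,\delta_1]$, making $\phi$ strictly decreasing on $[0,\delta_1]$.

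For the length bound, analyticity of $\gamma$ makes $\|\gamma'(\cdot)\|$ continuous, hence bounded by some $M$ on $[0, \delta_0/2]$, so $L(\gamma|_{[0,\delta_1]}) \leq M \delta_1$. Shrinking $\delta_1$ further so that $M\delta_1 < \epsilon$ and linearly reparameterizing $[0,\delta_1]$ to $[0,1]$ produces the desired path $h$. The main obstacle is precisely the step that promotes the strict inequality to strict monotonicity: the curve selection lemma delivers only $\phi(t) < \phi(0)$ on $(0,\delta_0)$, and for a merely continuous $\phi$ this is compatible with arbitrarily bad oscillation near $0$; it is the analyticity of both $f$ and $\gamma$---hence \cref{as:semianalytic}---that makes the leading-term argument go through and yields the monotonicity required by the conclusion.
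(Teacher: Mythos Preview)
Your argument is correct, and it differs from the paper's in a meaningful way. The paper works with the sublevel set $\setU=\{x\in\X:f(x)\leq f(x_0)\}$, first invokes \cref{lm:connectedness} (which tacitly uses the ambient hypotheses of \cref{thm:nece}: exactness of the relaxation and absence of spurious local optima) to conclude $\setU$ is connected, then applies Whitney $p$-regularity (\cref{lm:whitney}) to produce a short \emph{semianalytic} curve in $\setU$ from $x_0$ to a nearby point of strictly lower cost. Because that curve is only piecewise analytic, the paper must locate the first analytic piece on which $f$ is nonconstant before running the leading-term argument. Your route is more direct: you apply the curve selection lemma to the strict sublevel set $U$, obtaining a single real-analytic arc $\gamma$ entering $U$ at $t=0^+$; the Taylor argument and the length bound via $\|\gamma'\|$ then finish the job without any appeal to connectedness, $p$-regularity, or the global hypotheses of \cref{thm:nece}. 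In that sense your proof establishes the lemma in slightly greater generality (it needs only \cref{as:semianalytic} and that $x_0$ is not a local optimum), while the paper's argument reuses the Whitney-regularity machinery already in place but at the cost of depending on \cref{lm:connectedness}.
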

\begin{proof}
Consider the set $\setU:=\{x\in\X: f(x)\leq f(x_0)\}$, which by definition is also semi-analytic. 
Since $x_0\in\X$ is not an optimum of \eqref{eq:opt}, 
the problem $\min_{x\in\setU}f(x)$ must also be exact with respect to \eqref{eq:optR} and it does not introduce new local optima compared to \eqref{eq:opt}.
By \cref{lm:connectedness}, $\setU$ must be connected.

According to \cref{lm:whitney}, there is a rectifiable and semianalytic curve $h_0$ in $\setU$ such that $h_0(0)=x_0$, $L(h_0)<\epsilon$, $f(h_0(1))<f(x_0)$ and $f(h_0(t))\leq f(x_0)$ for all $t\in[0,1]$.
Here $f(h_0(1))$ can be chosen as any point in $\setU$ which has a strictly smaller cost value than $x_0$ and is sufficiently close to $x_0$ in Euclidean distance.
\footnote{We can always do so because $x_0$ is not a local optimum of \eqref{eq:opt}. The inequality $L(h_0)<\epsilon$ is satisfied because of the $p$-regularity of $\setU$.}
It is known that a semianalytic curve is analytic except for a finite number of points \cite{gabrielov1968projections}.
Assume $h_0(t)$ is not analytic at $0=a_0<a_1<\cdots<a_k=1$ where $k\geq 1$.
By Theorem on the parametrization of a semi-analytic arc in \cite{lojasiewicz1995semi} and the assumption that $f$ is analytic,
the value of $f(h_0(t))$ within any interval $[a_{\ell-1}, a_{\ell}]$ should be equal to some analytic function defined over an open superset of $[a_{\ell-1}, a_{\ell}]$.
Since $f(h_0(1))<f(h_0(0))$, the function $f(h_0(t))$ cannot be a constant function over $[0,1]$.
Let $[a_{\ell-1}, a_{\ell}]$ be the first interval within which $f(h_0(t))$ is not constant,
then $f(h_0(a_{\ell-1}))=f(h_0(0))$.
As $f(h_0(t))$ within $[a_{\ell-1}, a_{\ell}]$ equals to a analytic function defined over an open superset of $[a_{\ell-1}, a_{\ell}]$,
there must be a small subinterval $[a_{\ell-1}, a_{\ell-1}+\delta)$ for some $\delta>0$ within which we always have
\begin{align*}
f(h_0(t))=f(h_0(a_{\ell-1})) + \sum_{i=0}^\infty c_i (t-a_{\ell-1})^i,
\end{align*}
where the right hand side is the Taylor expansion of $f(h_0(t))$ at $a_{\ell-1}$.
Since $f(h_0(t))$ is not constant over $[a_{\ell-1}, a_{\ell}]$, the coefficients $c_i$ cannot all be zeros by the identity theorem.
Suppose $c_{i_0}$ is the first nonzero coefficient in the sequence $\{c_i\}_{i=0}^{\infty}$,
we have two cases.
If $c_{i_0}>0$, then $f(h_0(t))$ is strictly increasing within $[a_{\ell-1}, a_{\ell-1}+\delta')$ for some small positive $\delta'<\delta$.
It contradicts to the facts that $f(h_0(a_{\ell-1}))=f(x_0)$ and $f(h_0(t))\leq f(x_0)$ for all $t\in[0,1]$.
If $c_{i_0}<0$, then $f(h_0(t))$ is strictly decreasing within $[a_{\ell-1}, a_{\ell-1}+\delta')$ for some small positive $\delta'<\delta$.
Then we can construct a new path $h$ such that $h(t)=h_0(t\cdot(a_{\ell-1}+\delta'))$ for all $t\in[0,1]$.
It is easy to check such $h$ satisfies all the requirements in \cref{lm:short_curve}.
\end{proof}

Now we consider weaker versions of \ref{C1} and \ref{C2}.
\begin{enumerate}[label=(C\arabic*)]
\setcounter{enumi}{2}
\item\label{C3} For any $x\in\XR\setminus\X$, there is a path $h_x$ in $\XR$ such that $h_x(0)=x, h_x(1)\in\X$, both $f(h_x(t))$ and $V(h_x(t))$ are non-increasing for $t\in[0,1]$. 
\item\label{C4} All the $\{L(h_x)\}_{x\in\XR\setminus\X}$ are finite and uniformly bounded.
\end{enumerate}

Compared to \ref{C1}, \ref{C3} does not require $f(h_x(0))>f(h_x(1))$ to strictly hold. 
Then we have a weaker version of \cref{thm:nece} as follows.

\begin{lemma}[weaker necessary condition]\label{lm:weaker_nece}
If \eqref{eq:optR} is exact to \eqref{eq:opt} and any local optimum of \eqref{eq:opt} is also globally optimal, 
then there always exists a Lyapunov-like function $V$ and a corresponding family of paths $\{h_x\}_{x\in\XR\setminus\X}$ satisfying \ref{C3} and \ref{C4}.
\end{lemma}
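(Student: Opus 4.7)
The plan is to take $V(z):=d(z,\X)$, the Euclidean distance from $z$ to $\X$, which is 1-Lipschitz continuous, vanishes exactly on $\X$, and is strictly positive on $\XR\setminus\X$; so $V$ qualifies as a Lyapunov-like function in the sense of \cref{def:Lyapunov}. For each $x\in\XR\setminus\X$ I construct $h_x$ as the concatenation of an \emph{outer} stage carrying $x$ into some $y_x\in\X$, followed by an \emph{inner} stage inside $\X$ ending at a fixed global optimum $x^*\in\X$ (available by exactness of \eqref{eq:optR}).

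For the inner stage I combine \cref{lm:connectedness}, which gives $\X$ connected, \cref{lm:whitney}, which then supplies a rectifiable semianalytic curve in $\X$ from any $y\in\X$ to $x^*$ of length at most $C\,\mathrm{diam}(\X)^{1/p}$, and \cref{lm:short_curve}. Let $\gamma$ be such a Whitney curve; since it is semianalytic and $f$ is analytic, $f\circ\gamma$ is piecewise analytic on $[0,1]$ and therefore has only finitely many maximal ascending sub-arcs. Each such sub-arc begins at a point $z\in\X$ with $f(z)>f(x^*)$, so by the no-spurious-local-optima hypothesis $z$ is not a local optimum; \cref{lm:short_curve} then supplies a strictly descending detour in $\X$ of arbitrarily small length, which I splice in at the start of the sub-arc. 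Iterating finitely many times (and arranging detour lengths to be summable) yields a rectifiable curve inside $\X$ from $y_x$ to $x^*$ along which $f$ is non-increasing and $V\equiv 0$.

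For the outer stage the key building block is the segment $\ell_x(s):=(1-s)x+s\pi(x)$, where $\pi(x)$ is a nearest point of $\X$ to $x$. Expanding $\|\ell_x(s)-y\|^2$ in $s$ for an arbitrary $y\in\X$ and using $\|y-x\|\geq\|\pi(x)-x\|$ gives the bound $\|\ell_x(s)-y\|\geq(1-s)\|\pi(x)-x\|$; combined with $\|\ell_x(s)-\pi(x)\|=(1-s)\|\pi(x)-x\|$, this yields the sharp identity $V(\ell_x(s))=(1-s)\,d(x,\X)$, so $V$ is linearly and hence monotonically decreasing along $\ell_x$. Since $f$ need not be non-increasing along $\ell_x$, I upgrade the outer stage by running a descent flow whose direction at each $z$ is a convex combination of $\pi(z)-z$ (which locally decreases $V$ at rate $d(z,\X)$) and $x^*-z$ (which, by convexity of $f$ and the minimality of $x^*$ over $\XR$, locally decreases $f$), with the mixing coefficient chosen so that both directional derivatives are non-positive. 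Assembling these infinitesimal descent directions into a rectifiable curve, via a selection/compactness argument that uses the semianalytic structure of $\X$ to rule out the pathological anti-parallel case (where one must instead move laterally on the joint $V$-$f$-level set until the pathology breaks), produces a single outer path ending in $\X$ with both $V$ and $f$ non-increasing.

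The uniform bound \ref{C4} follows immediately: the outer piece has length at most $\mathrm{diam}(\XR)$, the inner piece has length at most a constant times $\mathrm{diam}(\X)^{1/p}$ plus a summable series of detour lengths, and all of these are bounded independently of $x$ by the compactness of $\XR$. The main obstacle is the outer-stage construction: neither the line to $\pi(x)$ (monotone for $V$ only) nor the line to $x^*$ (monotone for $f$ only) is individually adequate, and showing that a jointly monotone curve exists requires ruling out the pathological case in which the $V$-descent and $f$-descent directions are strictly opposed; this is where the semianalyticity assumption \cref{as:semianalytic} and the no-spurious-local-optima hypothesis together ensure the descent can be realized globally along a rectifiable path.
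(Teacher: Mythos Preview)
Your approach differs substantially from the paper's, and it has genuine gaps.

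\textbf{What the paper does.} The paper does \emph{not} take $V$ to be the Euclidean distance. Instead it defines
\[
V(x)\ :=\ \inf\Bigl\{L(h):\ h\in\tHset,\ h(0)=x,\ h(1)\in\X\Bigr\},
\]
i.e.\ the length of the shortest rectifiable curve from $x$ into $\X$ along which $f(h(t))\geq f(h(1))$ throughout. It then proves the infimum is attained (via Arzel\`a--Ascoli), lets $h_x$ be the minimizer, and shows by curve-shortening arguments that any minimizer is injective and has $f(h_x(t))$ non-increasing. The key payoff is that $V(h_x(t_0))$ equals the \emph{remaining arc-length} $\sup_{\pi\in\Pi|_{t_0}^1}L_\pi(h_x)$, so $V$ is automatically non-increasing along $h_x$. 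Continuity of $V$ is checked separately. The uniform length bound \ref{C4} is immediate since the straight segment to $x^*$ is feasible. In short, the paper bakes both monotonicities into the definition of $V$ and then extracts the paths as geodesics; there is no separate ``outer/inner'' construction.

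\textbf{Gaps in your proposal.} With your choice $V=d(\cdot,\X)$ the whole burden shifts to constructing paths along which \emph{both} $f$ and $d(\cdot,\X)$ are non-increasing, and this is exactly where your argument is incomplete. The ``descent flow'' that mixes $\pi(z)-z$ and $x^*-z$ is only a heuristic: $\pi$ need not be single-valued or continuous, the existence of a direction in which both quantities weakly decrease is not established at every point, integrating such directions into a \emph{rectifiable} curve that actually reaches $\X$ is not argued, and the ``anti-parallel'' obstruction is named but not resolved. Even the clean identity $V(\ell_x(s))=(1-s)d(x,\X)$ fails in general: on the unit disk with $\X$ the unit circle, the segment from $(0.5,0)$ toward $x^*=(-1,0)$ has $d(\cdot,\X)$ first increasing (to $1$ at the center) before decreasing, so the claimed monotone outer segment need not exist even to the global optimum.

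Your inner stage is both unnecessary and flawed. Condition \ref{C3} only requires $h_x(1)\in\X$, not $h_x(1)=x^*$, so once the outer stage lands in $\X$ you are done. More importantly, the splicing argument does not work: inserting a short strictly-descending detour at the start of an ascending sub-arc of the Whitney curve does not remove that ascending sub-arc; after the detour you still have to traverse (or replace) the arc on which $f$ rises, and \cref{lm:short_curve} does not let you prescribe the endpoint of the detour. Iterating does not help, because each iteration leaves the original ascent intact.

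If you want to salvage the Euclidean-distance idea you would need a genuinely new argument that, under exactness plus no spurious local optima plus semianalyticity, a jointly $(f,d(\cdot,\X))$-monotone rectifiable curve to $\X$ always exists from every $x\in\XR\setminus\X$. The paper sidesteps this entirely by choosing $V$ so that the monotone curves are its very minimizers.
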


We now show that \cref{lm:weaker_nece}, though weaker in its statement, actually implies \cref{thm:nece}, so later on we will only focus on the proof of \cref{lm:weaker_nece}.
To see this, we suppose $\Vd$ and $\{\hd_x\}_{x\in\XR\setminus\X}$ are the Lyapunov-like function and paths guaranteed by \cref{lm:weaker_nece}.

For each $x\in\XR\setminus\X$, if $\hd_x(1)$ is a local optimum (so it is also a global optimum) of \eqref{eq:opt} then we must have $f(h_x(1))<f(h_x(0))$ since the relaxation is exact. 
We construct $\hdd_x=\hd_x$.

If $\hd_x(1)$ is not a local optimum, then by \cref{lm:short_curve}, there exists a path $\hxtra_x$, which satisfies 
$\hxtra_x(0)=\hd_x(1)$, $\hxtra_x(t)\in\X$, $f(\hxtra_x(t))$ is non-increasing, $f(\hxtra_x(1))<f(\hxtra_x(0))$ and $L(\hxtra_x)<\epsilon$.
Here we choose $\epsilon$ as a fixed positive value for all $x$. We then construct
\begin{align*}
\hdd_x(t):=\left\{
\begin{array}{ll}
\hd_x(2t),&\text{if $t\in[0,1/2]$}\\
\hxtra_x(2t-1), &\text{if $t\in[1/2,1]$}.
\end{array}
\right.
\end{align*}

We now let $V=\Vd$ and $h_x=\overline{\hdd_x}$ for all $x\in\XR\setminus\X$. Recall that $\overline{\hdd_x}$ is the arc-length reparameterization of $\hdd_x$.
Clearly, such construction satisfies \ref{C1} as we strictly reduce the cost at the end of each path unless the path has already reached the global optimum. 
Besides, $\{h_x\}_{x\in\XR\setminus\X}$ is uniformly bounded as $\XR$ is compact, 
and $\{L(h_x)\}_{x\in\XR\setminus\X}$ also has the uniform upperbound as both $L(\hd_x)$ and $L(\hxtra_x)$ are uniformly bounded for all $x$.
By \cref{lm:rect_equicon} $\{h_x\}_{x\in\XR\setminus\X}$ is uniformly equicontinuous, so \ref{C2} is also satisfied.
To summarize, when \cref{lm:weaker_nece} is correct, one can always revise the Lyapunov-like function and paths provided by \cref{lm:weaker_nece} to make \cref{thm:nece} hold as well.
Therefore, in the rest of the section, we will only prove \cref{lm:weaker_nece} and the correctness of \cref{thm:nece} just follows.

Let $x^*$ be any global optimum of \eqref{eq:opt}, then it is also an optimum of \eqref{eq:optR}.
Define
\begin{align*}
\Hset:=&\{h~|~h: [0,1]\to\XR\text{~is~continuous and~} L(h)<\infty\}\\
\bHset:=&\{\bh~|~h\in\Hset\}\\
\tHset:=&\{h~|~h\in\bHset,~f(h(t))\geq f(h(1))\text{ for all $t\in[0,1]$}\}.
\end{align*}
An immediate observation is if a continuous function $h:[0,1]\to\XR$ satisfies $L(h)<\infty$ and $f(h(t))\geq f(h(1))$ for all $t$, then $\bh\in\tHset$.

\subsection{Construction}
We construct $V$ as
\begin{align}\label{eq:Vdef}
V(x)=\inf_{\substack{h\in\tHset\\h(0)=x\\h(1)\in\X}} L(h)
\end{align}
\begin{lemma}\label{lm:h_convergence}
For a sequence $(h_i)_{i=1}^\infty$ where $h_i\in\tHset$,
if both $(h_i)_{i=1}^\infty$ and $(L(h_i))_{i=1}^\infty$ are uniformly bounded,
then there must be a subsequence which uniformly converges to some $h^*$ such that its arc-length reparameterization, denoted as $\bhs$, is in $\tHset$.
Furthermore, $L(h^*)=L(\bhs)\leq \limsup_{i}L(h_i)$.
\end{lemma}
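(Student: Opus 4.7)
The plan is to extract a uniformly convergent subsequence by the Arzelà–Ascoli theorem and then check that the required structure passes to the limit. Because each $h_i \in \tHset \subset \bHset$ is already an arc-length reparameterization of itself, and the lengths $L(h_i)$ are uniformly bounded by hypothesis, \cref{lm:rect_equicon} immediately yields that $\{h_i\}_{i=1}^\infty$ is uniformly equicontinuous. Combined with the stated uniform boundedness of the $h_i$, Arzelà–Ascoli produces a subsequence, which without loss of generality I still denote $h_i$, converging uniformly to some continuous $h^* : [0,1] \to \K^n$.

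Next I would verify the structural properties of $h^*$. Since $\XR$ is compact (hence closed) and each $h_i$ takes values in $\XR$, pointwise convergence gives $h^*(t) \in \XR$ for every $t$. For the cost property, continuity of $f$ together with $h_i \in \tHset$ gives, for every $t \in [0,1]$, $f(h^*(t)) = \lim_i f(h_i(t)) \geq \lim_i f(h_i(1)) = f(h^*(1))$. To control the length, I would invoke the standard lower semi-continuity of length under uniform convergence: for any finite partition $\pi \in \Pi$, the polygonal functional $L_\pi$ depends only on finitely many evaluations of its argument, so it passes to the uniform limit, giving $L_\pi(h^*) = \lim_i L_\pi(h_i) \leq \liminf_i L(h_i)$; taking the supremum over $\pi \in \Pi$ yields $L(h^*) \leq \liminf_i L(h_i) \leq \limsup_i L(h_i)$. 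In particular $L(h^*) < \infty$, so $h^* \in \Hset$ and its arc-length reparameterization $\bhs$ is well defined, with $L(\bhs) = L(h^*)$ by construction.

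It remains to show $\bhs \in \tHset$. By definition $\bhs \in \bHset$. Since reparameterization preserves the image of the path and fixes its endpoints, $\bhs(1) = h^*(1)$ and $\{\bhs(s) : s \in [0,1]\} = \{h^*(t) : t \in [0,1]\}$, so $f(\bhs(s)) \geq f(h^*(1)) = f(\bhs(1))$ for every $s \in [0,1]$, placing $\bhs$ in $\tHset$ and closing out the length inequality $L(h^*) = L(\bhs) \leq \limsup_i L(h_i)$. The only genuinely nontrivial step in the whole argument is the lower semi-continuity of length; this is classical and follows at once from the observation that the length functional is the pointwise supremum of the continuous functionals $L_\pi$, which are manifestly continuous under uniform convergence. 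Every other verification — continuity of $h^*$, membership in $\XR$, preservation of the cost inequality, and invariance of the $\tHset$ property under arc-length reparameterization — is a routine limit-passing exercise.
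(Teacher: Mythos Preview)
Your proof is correct and follows essentially the same approach as the paper: invoke \cref{lm:rect_equicon} plus Arzel\`a--Ascoli to get a uniform limit, then pass the cost inequality and the length bound to the limit. Your direct lower-semicontinuity argument for $L$ even yields the slightly sharper bound $L(h^*)\leq\liminf_i L(h_i)$, whereas the paper argues by contradiction to get only $\limsup$, and you spell out more explicitly why the properties of $h^*$ transfer to $\bhs$; otherwise the arguments coincide.
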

\begin{proof}
By \cref{lm:rect_equicon}, $(h_i)_{i=1}^\infty$ is both uniformly bounded and uniformly equicontinuous.
By Arzel\`a-Ascoli theorem, a subsequence of $(h_i)_{i=1}^\infty$ uniformly converges to a limit $h^*$.
Without loss of generalization, we denote this subsequence as $(h_i)_{i=1}^\infty$ as well.
By uniform limit theorem and the compactness of $\XR$, $\bhs$ is a continuous function mapping from $[0,1]$ to $\XR$.
To show $\bhs\in\tHset$, it is sufficient to show $f(h^*(t))\geq f(h^*(1))$ for all $t\in[0,1]$ and $L(h^*)<\infty$.
If $f(h^*(t))= f(h^*(1))-\epsilon$ for some $t\in[0,1]$ and $\epsilon>0$, then for sufficiently large $i$, we would have $|f(h_i(t))-f(h^*(t))|<\epsilon/3$ and $|f(h_i(1))-f(h^*(1))|<\epsilon/3$.
Thus $f(h_i(t)) \leq f(h_i(1))-\epsilon/3$ and it contradicts to $h_i\in\tHset$.

Instead of showing $L(h^*)<\infty$, we directly prove $L(h^*)\leq \limsup_{i}L(h_i)$.
Otherwise, there exists $\pi=(t_1,\cdots,t_N)\in\Pi$ such that $L_\pi(h^*)= \limsup_{i}L(h_i)+\epsilon$ for $\epsilon>0$.
For sufficiently large $i$, we have
\begin{align*}
&|L_\pi(h_i)-L_\pi(h^*)|\\
=&\Big|\sum_{j=1}^N \|h_i(t_{j-1})-h(t_j)\|_{\ell_2} - \sum_{j=1}^N \|h^*(t_{j-1})-h^*(t_j)\|_{\ell_2} \Big|\\
\leq&\sum_{j=1}^N\Big( \|h_i(t_{j-1})-h^*(t_{j-1})\|_{\ell_2} + \|h_i(t_{j})-h^*(t_{j})\|_{\ell_2}\Big)\leq\frac{\epsilon}{2}
\end{align*}
Thus, $L(h_i)\geq L_\pi(h_i) \geq \limsup_{i}L(h_i)+\epsilon/2$  holds for sufficiently large $i$. It contradicts to the definition of $\limsup$.
As a result, we must have $L(h^*)\leq \limsup_{i}L(h_i)$.
\end{proof}
\begin{lemma}
The optimization in \eqref{eq:Vdef} is feasible and the optimal cost can be achieved.
\end{lemma}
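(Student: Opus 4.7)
The plan is to split the claim into feasibility and attainment. For feasibility I will exhibit one explicit element of the constraint set in \eqref{eq:Vdef}; for attainment I will take a minimizing sequence and invoke \cref{lm:h_convergence} to extract a uniformly convergent subsequence, then check that the endpoints of the limit are correct. The infimum equals the length of the limit path, so it is attained.

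For feasibility, I consider two cases. If $x\in\X$, the constant path $h\equiv x$ has $L(h)=0$ and lies trivially in $\tHset$, so this $h$ is feasible and $V(x)=0$. If $x\in\XR\setminus\X$, let $x^*$ be any global optimum of \eqref{eq:optR}; by the exactness assumption $x^*\in\X$. Take the line segment $\ell(t)=(1-t)x+tx^*$, which lies in $\XR$ since $\XR$ is convex. Exactly the argument given in the proof of \cref{thm:suff} shows that $f(\ell(t))$ is non-increasing in $t$, using convexity of $f$ together with the global optimality of $x^*$; hence $f(\ell(t))\geq f(\ell(1))=f(x^*)$ for all $t\in[0,1]$. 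Its length is $\|x-x^*\|_{\ell_2}<\infty$ because $\XR$ is compact, so $\ell\in\Hset$. Its arc-length reparameterization is then an element of $\tHset$ with the required endpoints, so the feasible set of \eqref{eq:Vdef} is nonempty.

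For attainment, let $(h_i)_{i=1}^\infty$ be a minimizing sequence in $\tHset$ with $h_i(0)=x$, $h_i(1)\in\X$, and $L(h_i)\to V(x)$. Since $h_i([0,1])\subseteq\XR$ and $\XR$ is compact, the family $(h_i)$ is uniformly bounded in $\ell_\infty$. Since $L(h_i)$ is a convergent sequence of real numbers, it is also bounded. These are precisely the hypotheses of \cref{lm:h_convergence}, which produces a subsequence converging uniformly to some $h^*$ such that $\bhs\in\tHset$ and $L(\bhs)\leq\limsup_i L(h_i)=V(x)$. It remains to check the boundary values of $\bhs$. Arc-length reparameterization preserves endpoints, so $\bhs(0)=h^*(0)=x$ (as $h_i(0)=x$ for every $i$) and $\bhs(1)=h^*(1)=\lim_i h_i(1)$, which lies in $\X$ because $\X$ is closed and each $h_i(1)\in\X$. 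Thus $\bhs$ is feasible for \eqref{eq:Vdef}, forcing $L(\bhs)\geq V(x)$, which combined with the reverse inequality gives $L(\bhs)=V(x)$, i.e.\ the infimum is attained by $\bhs$.

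The main subtlety, rather than a genuine obstacle, is the distinction between $h^*$ and its reparameterization $\bhs$: the uniform limit of arc-length parameterized paths is Lipschitz but need not itself be arc-length parameterized, which is why \cref{lm:h_convergence} is stated in terms of $\bhs\in\tHset$ rather than $h^*\in\tHset$. Since arc-length reparameterization fixes both endpoints, preserves the image of the path, and preserves the length, this does not interfere with the argument above.
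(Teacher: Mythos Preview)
Your proof is correct and essentially follows the paper's own argument: exhibit the straight-line segment from $x$ to a global optimum $x^*$ for feasibility, then take a minimizing sequence and apply \cref{lm:h_convergence} to extract a limit in $\tHset$ with the right endpoints. The only cosmetic differences are that the paper handles feasibility for all $x\in\XR$ in one stroke (your case split $x\in\X$ vs.\ $x\notin\X$ is unnecessary since the line-segment argument already covers both) and frames attainment by contradiction rather than directly, but the content is the same.
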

\begin{proof}
We fix some $x\in\XR$.
To show the feasibility, consider $\hfea(t):=(1-t)x+tx^*$, which is feasible to \eqref{eq:Vdef}.
Let $\Lfea=L(\hfea)$.
Since $\Lfea$ is finite and $L(h)$ is non-negative, $V(x)$ must be finite.
To show the achievability of the optimal cost, if not, then there must be a sequence of feasible $(h_i)_{i=1}^\infty$ such that 
\begin{align*}
&\Lfea > L(h_i) \geq L(h_{i+1}) > V(x)\text{ for all $i\geq1$}\\
&\lim_{i\to\infty}L(h_i)=V(x).
\end{align*}
The compactness of $\XR$ implies $(h_i)_{i=1}^\infty$ is uniformly bounded as well. 
By \cref{lm:h_convergence}, a subsequence of $(h_i)_{i=1}^\infty$, denoted as $(h_i)_{i=1}^\infty$ as well, uniformly converges to a limit $h^*$ and $L(h^*)=L(\bhs)\leq V(x)$.
Moreover, 
\begin{align*}
&\bhs(0)=h^*(0)=\lim_{i\to\infty}h_i(0)=x\\
&\bhs(1)=h^*(1)=\lim_{i\to\infty}h_i(1)\in\X.
\end{align*}

Above all, we proved $\bhs$ is feasible to \eqref{eq:Vdef}, and the cost $L(\bhs)$ is not worse than $V(x)$.
It contradicts to the non-achievability assumption.
\end{proof}

For each $x\in\XR\setminus\X$, we construct $h_x$ as
\begin{align}\label{eq:h}
h_x=\argmin_{\substack{h\in\tHset\\h(0)=x\\h(1)\in\X}} L(h)
\end{align}
If there are multiple minimizers then $h_x$ can be chosen as any one of them.
\begin{lemma}\label{lm:injective}
For $x\in\XR\setminus\X$, the function $h_x$ is injective.
\end{lemma}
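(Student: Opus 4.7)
The plan is to argue by contradiction via a loop-cutting construction: if $h_x(s)=h_x(t)$ for some $0\le s<t\le 1$, then the portion of $h_x$ traversed between times $s$ and $t$ is superfluous, and excising it produces a strictly shorter feasible path from $x$ to $\X$, contradicting the minimality of $L(h_x)=V(x)$.

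First I would observe that $L(h_x)>0$. Indeed, if $L(h_x)=0$ then $h_x$ is a constant map, forcing $x=h_x(0)=h_x(1)\in\X$ and contradicting $x\in\XR\setminus\X$. Moreover, since $h_x\in\bHset$ is an arc-length reparameterization, it enjoys the key property
\[
\sup_{\pi\in\Pi|_a^b} L_\pi(h_x)=(b-a)L(h_x)\qquad\text{for all }0\le a\le b\le 1.
\]
Now assume for contradiction that $h_x(s)=h_x(t)$ with $s<t$. Note we cannot have $s=0$ and $t=1$ simultaneously, since this would force $x=h_x(1)\in\X$; hence $s+(1-t)>0$. Define $g:[0,\,s+(1-t)]\to\XR$ by
\[
g(\tau)=\begin{cases} h_x(\tau), & \tau\in[0,s],\\ h_x(\tau+(t-s)), & \tau\in[s,\,s+(1-t)].\end{cases}
\]
Continuity at $\tau=s$ holds because $h_x(s)=h_x(t)$. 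After rescaling the domain linearly to $[0,1]$ and taking the arc-length reparameterization, I obtain a path $\tilde h\in\bHset$ with $\tilde h(0)=x$ and $\tilde h(1)=h_x(1)\in\X$.

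It remains to verify $\tilde h\in\tHset$ and to compare lengths. Since the image of $\tilde h$ is contained in $h_x([0,s]\cup[t,1])$ and $f(h_x(\tau))\ge f(h_x(1))=f(\tilde h(1))$ for every $\tau\in[0,1]$ by $h_x\in\tHset$, we get $f(\tilde h(\tau))\ge f(\tilde h(1))$ for all $\tau$, so $\tilde h\in\tHset$. Using the arc-length identity above on the intervals $[0,s]$ and $[t,1]$,
\[
L(\tilde h)=L(g)=sL(h_x)+(1-t)L(h_x)=\bigl(1-(t-s)\bigr)L(h_x)<L(h_x),
\]
because $t>s$ and $L(h_x)>0$. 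Thus $\tilde h$ is feasible in \eqref{eq:h} with strictly smaller length than $h_x$, contradicting the optimality of $h_x$. The only real subtlety in the proof is ensuring that cutting out the loop genuinely reduces length additively, which is where the arc-length parameterization of $h_x$ is essential; once that identity is in hand, membership of $\tilde h$ in $\tHset$ and the endpoint conditions are immediate from the construction.
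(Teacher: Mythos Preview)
Your proof is correct and follows essentially the same loop-cutting strategy as the paper: assume $h_x(s)=h_x(t)$ with $s<t$, use the arc-length identity $\sup_{\pi\in\Pi|_a^b}L_\pi(h_x)=(b-a)L(h_x)$ together with $L(h_x)=V(x)>0$ to see the loop has positive length, then build a strictly shorter competitor in $\tHset$ with the same endpoints. The only cosmetic difference is that the paper replaces the segment over $[s,t]$ by the constant $h_x(s)$ rather than excising it, but the length computation and the verification of feasibility via $f(h_x(\tau))\ge f(h_x(1))$ are identical in spirit.
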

\begin{proof}
Otherwise, for some $x$, there exist $t_1< t_2$ such that $h_{x}(t_1)=h_{x}(t_2)$.
Since $h_{x}\in\tHset\subseteq\bHset$, we have
\begin{align*}
\sup_{\pi\in\Pi|_{t_1}^{t_2}}L_\pi(h_{x})=(t_2-t_1)L(h_{x})=(t_2-t_1)V(x)>0
\end{align*}
Consider a new path defined as
\begin{align*}
h^*(t):=\left\{
\begin{array}{ll}
h_{x}(t),&\text{if $t\in[0,1]\setminus[t_1,t_2]$}\\
h_{x}(t_1), &\text{if $t\in[t_1,t_2]$}
\end{array}
\right.
\end{align*}
It is easy to check $h^*$ is continuous and entirely within $\XR$.
For any $t\in[0,1]$, $h_{x}\in\tHset$ implies $f(h^*(t))\geq f(h_{x}(1))=f(h^*(1))$.
Further, we have
\begin{align*}
L(h^*)=&\sup_{\pi\in\Pi}L_\pi(h^*)\\
=&\sup_{\pi\in\Pi|_{0}^{t_1}}L_\pi(h^*)+\sup_{\pi\in\Pi|_{t_1}^{t_2}}L_\pi(h^*)+\sup_{\pi\in\Pi|_{t_2}^{1}}L_\pi(h^*)\\
=&\sup_{\pi\in\Pi|_{0}^{t_1}}L_\pi(h_{x})+0+\sup_{\pi\in\Pi|_{t_2}^{1}}L_\pi(h_{x})\\
<&\sup_{\pi\in\Pi|_{0}^{t_1}}L_\pi(h_{x})+\sup_{\pi\in\Pi|_{t_1}^{t_2}}L_\pi(h_{x})+\sup_{\pi\in\Pi|_{t_2}^{1}}L_\pi(h_{x})\\
=&\sup_{\pi\in\Pi}L_\pi(h_{x})=L(h_{x}).
\end{align*}
Above all, the arc-length reparameterization of $h^*$, denoted as $\bhs$, is feasible to \eqref{eq:h} but achieves a strictly lower cost than $h_{x}$.
This contradicts to the optimality of $h_{x}$.
\end{proof}
\begin{corollary}\label{co:triangle}
For distinctive $t_1,t_2,t_3\in[0,1]$, if $f(h_x(t_1))\geq f(h_x(t_2))$ and $f(h_x(t_1))> f(h_x(t_3))$, then $\|h_x(t_1)-h_x(t_2)\|_{\ell_2} +\|h_x(t_1)-h_x(t_3)\|_{\ell_2}>\|h_x(t_2)-h_x(t_3)\|_{\ell_2}$.
\end{corollary}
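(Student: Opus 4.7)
My plan is to argue by contradiction, reducing to the question of when the Euclidean triangle inequality becomes an equality and then exploiting the convexity of $f$. The weak inequality $\|h_x(t_1)-h_x(t_2)\|_{\ell_2} + \|h_x(t_1)-h_x(t_3)\|_{\ell_2} \geq \|h_x(t_2)-h_x(t_3)\|_{\ell_2}$ is just the ordinary triangle inequality, so the entire content of the corollary is the strictness. If equality held, then because the $\ell_2$ norm is strictly convex (equality $\|u\|+\|v\|=\|u+v\|$ forces $u$ and $v$ to be nonnegatively parallel), the point $h_x(t_1)$ would have to lie on the closed line segment connecting $h_x(t_2)$ and $h_x(t_3)$.

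Next, I would use \cref{lm:injective} to rule out the two endpoints of that segment: since $t_1,t_2,t_3$ are distinct, $h_x(t_1)$ is distinct from both $h_x(t_2)$ and $h_x(t_3)$. Therefore $h_x(t_1) = \alpha h_x(t_2) + (1-\alpha) h_x(t_3)$ for some $\alpha \in (0,1)$, i.e., $h_x(t_1)$ is a strict convex combination of the other two.

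Finally, convexity of $f$ on $\XR$ yields $f(h_x(t_1)) \leq \alpha f(h_x(t_2)) + (1-\alpha) f(h_x(t_3))$. Pairing this with the hypothesis $f(h_x(t_1)) \geq f(h_x(t_2))$ and canceling $\alpha f(h_x(t_2))$ from both sides gives $(1-\alpha)(f(h_x(t_2))-f(h_x(t_3))) \leq 0$, hence $f(h_x(t_2)) \leq f(h_x(t_3))$. Pairing the same convexity bound instead with the strict $f(h_x(t_1)) > f(h_x(t_3))$ and cancelling $(1-\alpha)f(h_x(t_3))$ gives $\alpha(f(h_x(t_3))-f(h_x(t_2))) < 0$, hence $f(h_x(t_3)) < f(h_x(t_2))$. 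These two conclusions are incompatible, delivering the contradiction. I do not anticipate any real obstacle: the three ingredients required (strict convexity of the Euclidean norm, injectivity of $h_x$ from \cref{lm:injective}, and convexity of $f$) are all already in hand, so the argument should be quite short.
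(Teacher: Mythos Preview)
Your proposal is correct and follows essentially the same route as the paper: both argue by contradiction, observe that equality in the triangle inequality would force $h_x(t_1)$ onto the segment $[h_x(t_2),h_x(t_3)]$, invoke \cref{lm:injective} to exclude the endpoints, and then use convexity of $f$ to reach a contradiction. The only cosmetic difference is that the paper obtains the contradiction in a single chain $f(h_x(t_1))\leq \lambda f(h_x(t_2))+(1-\lambda)f(h_x(t_3))<f(h_x(t_1))$, whereas you split it into two opposing inequalities on $f(h_x(t_2))$ versus $f(h_x(t_3))$.
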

\begin{proof}
It is sufficient to show $h_x(t_1)$ is not the convex combination of $h_x(t_2)$ and $h_x(t_3)$.
Otherwise, we assume  $h_x(t_1)=\lambda h_x(t_2) + (1-\lambda)h_x(t_3)$ for some $\lambda\in[0,1]$.
First, \cref{lm:injective} implies $\lambda\neq 0,1$.
For $\lambda\in(0,1)$, the convexity of $f$ implies
\begin{align*}
f(h_x(t_1))=&f(\lambda h_x(t_2) + (1-\lambda)h_x(t_3))\\
\leq &\lambda f(h_x(t_2)) + (1-\lambda)f(h_x(t_3))\\
<&\lambda f(h_x(t_1)) + (1-\lambda)f(h_x(t_1))=f(h_x(t_1)).
\end{align*}
This contradiction shows $h_x(t_1)$ is not the convex combination of $h_x(t_2)$ and $h_x(t_3)$. 
Then the triangle inequality implies this corollary.
\end{proof}
\begin{lemma}\label{lm:fht_non_incre}
For each $h_x$ defined in \eqref{eq:h}, we have $f(h_x(t))$ is non-increasing in $t$ for $t\in[0,1]$.
\end{lemma}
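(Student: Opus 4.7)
The plan is to prove the claim by contradiction via a ``shortcut'' construction. Assume $f\circ h_x$ is not non-increasing, so there exist $t_a<t_b$ with $f(h_x(t_a))<f(h_x(t_b))$. Since $h_x\in\tHset$ gives $f(h_x(t_a))\geq f(h_x(1))$, I would pick a level $\gamma\in(\max\{f(h_x(t_a)),f(h_x(1))\},f(h_x(t_b)))$ and, using the intermediate value theorem on the continuous function $f\circ h_x$ (noting $f(h_x(1))<\gamma<f(h_x(t_b))$), locate $a<b$ in $[0,1]$ with $f(h_x(a))=f(h_x(b))=\gamma$ and $f(h_x(t))>\gamma$ for all $t\in(a,b)$.

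Next I would shortcut this bump. Let $\sigma$ be the straight line segment in the convex set $\XR$ from $p:=h_x(a)$ to $q:=h_x(b)$. Convexity of $f$ along $\sigma$ gives $f\circ\sigma\leq\gamma$, and since $f\circ h_x>\gamma$ strictly on $(a,b)$, $h_x|_{[a,b]}$ cannot coincide with $\sigma$; together with the arc-length parametrization of $h_x$ this yields $\|p-q\|_{\ell_2}<(b-a)L(h_x)$. The spliced path $\tilde h:=h_x|_{[0,a]}\ast\sigma\ast h_x|_{[b,1]}$ therefore satisfies $L(\tilde h)<L(h_x)$, ends at $h_x(1)\in\X$, and lies in $\bHset$ after arc-length reparametrization. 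The only portion whose $\tHset$-condition $f\geq f(\tilde h(1))=f(h_x(1))$ is not inherited from $h_x\in\tHset$ is the middle segment $\sigma$. In the benign case where $f\circ\sigma\geq f(h_x(1))$, we immediately have $\tilde h\in\tHset$ and $L(\tilde h)<L(h_x)$, contradicting the optimality of $h_x$ in \eqref{eq:h}.

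The delicate case is $\min_s f(\sigma(s))<f(h_x(1))$. Here I would bring in both standing hypotheses: by exactness of the relaxation, $\min_\X f=\min_\XR f\leq\min_s f(\sigma(s))<f(h_x(1))$, so $h_x(1)$ is not a global optimum of \eqref{eq:opt}, and the no-spurious-local-optima hypothesis then forces $h_x(1)$ to fail to be even a local optimum in $\X$. Applying \cref{lm:short_curve} iteratively, and using the $p$-regularity from \cref{lm:whitney} on suitable sublevel sets of $\X$, I would build a path $\tilde h_0:[0,1]\to\X$ with $\tilde h_0(0)=h_x(1)$, $f\circ\tilde h_0$ non-increasing, $f(\tilde h_0(1))\leq\min_s f(\sigma(s))$, and $L(\tilde h_0)$ strictly less than the fixed positive slack $(b-a)L(h_x)-\|p-q\|_{\ell_2}$. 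Concatenating $\tilde h$ with $\tilde h_0$ then gives a path from $x$ to $\tilde h_0(1)\in\X$ of length strictly below $L(h_x)$; it lies in $\tHset$ because on the $h_x$-pieces $f\geq f(h_x(1))>f(\tilde h_0(1))$, on $\sigma$ $f\geq\min_s f(\sigma(s))\geq f(\tilde h_0(1))$, and on $\tilde h_0$ itself $f\geq f(\tilde h_0(1))$ by the non-increasing property. This again contradicts the optimality of $h_x$.

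The main obstacle is the last step: \cref{lm:short_curve} only guarantees some strict $f$-decrease along a short path, with no quantitative lower bound on the decrement. Producing $\tilde h_0$ that descends all the way to the prescribed level $\min_s f(\sigma(s))$ while fitting inside the small length budget $(b-a)L(h_x)-\|p-q\|_{\ell_2}$ requires a careful iteration argument that exploits \cref{as:semianalytic} and \cref{lm:whitney}; showing that the iteration terminates (or converges) with controlled total length is the subtle technical ingredient that makes the argument go through.
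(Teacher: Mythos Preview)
Your overall shortcut-by-contradiction strategy matches the paper's, but your construction hits a real gap in the ``delicate case'' that the paper sidesteps entirely. The difficulty you identify is genuine: \cref{lm:short_curve} gives arbitrarily short paths with \emph{some} strict $f$-decrease, but no lower bound on the decrement, so an iteration may fail to reach the prescribed level $\min_s f(\sigma(s))$ within the fixed length budget $(b-a)L(h_x)-\|p-q\|$. Waving at \cref{lm:whitney} does not resolve this, because $p$-regularity controls path length in terms of endpoint distance, not in terms of $f$-drop; you would need something like a \L{}ojasiewicz-type inequality relating $f$-decrease to distance travelled, which is not available here.

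The paper's fix is to \emph{localize} the shortcut rather than cut at a fixed level $\gamma$. Instead of choosing $a,b$ as level-$\gamma$ crossings, one takes $t^\ddag$ to be the last global maximizer of $f\circ h_x$ on $[t_1,1]$, observes $f(h_x(t^\ddag))>f(h_x(1))$, and by continuity finds a radius $r$ with $f\geq f(h_x(1))$ on $\ball(h_x(t^\ddag),r)\cap\XR$. Then one picks $\delta>0$ small enough that $h_x([t^\ddag-\delta,t^\ddag+\delta])\subset\ball(h_x(t^\ddag),r)$; the chord between $h_x(t^\ddag-\delta)$ and $h_x(t^\ddag+\delta)$ lies in the convex set $\ball(h_x(t^\ddag),r)\cap\XR$, so $f$ on the chord stays $\geq f(h_x(1))$ automatically. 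Strict shortening follows from the triangle inequality via \cref{co:triangle} (using $f(h_x(t^\ddag))\geq f(h_x(t^\ddag-\delta))$ and $f(h_x(t^\ddag))> f(h_x(t^\ddag+\delta))$). This eliminates your delicate case altogether, and in particular the proof never invokes relaxation exactness, the no-spurious-local-optima hypothesis, or \cref{lm:short_curve}.
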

\begin{proof}
We fix an $x_0\in\XR\setminus\X$ and prove the result for $h_{x_0}$ defined above.
If not, then there exist $0\leq t_1<t_2\leq 1$ such that $f(h_{x_0}(t_1)) < f(h_{x_0}(t_2))$.
Now define
\begin{align*}
\td=\argmax_{t\in[t_1,1]}f(h_{x_0}(t))
,\quad
\tdd=\max_{\substack{t\in[\td,1]\\f(h_{x_0}(t))=f(h_{x_0}(\td))}}t.
\end{align*}
In other words, $\td$ is an arbitrary maximizer of $f(h_{x_0}(t))$, while $\tdd$ is the largest maximizer. Clearly, both $\td$ and $\tdd$ are well defined (due to the continuity and closedness) and are strictly between $t_1$ and $1$.
We also have $f(h_{x_0}(\tdd))>f(h_{x_0}(1))$ strictly holds.
By the continuity of $h_{x_0}(\cdot)$ and $f(h_{x_0}(\cdot))$, there exist $r,\delta>0$ such that $[\tdd-\delta, \tdd+\delta]\subseteq(t_1,1)$ and
\begin{itemize}
\item for $x\in\ball(h_{x_0}(\tdd),r)\cap\XR$, $f(h_{x_0}(1)) \leq f(x)$.
\item for $t\in[\tdd-\delta,\tdd)$, $h_{x_0}(t)\in\ball(h_{x_0}(\tdd),r)$. Therefore $f(h_{x_0}(1))\leq f(h_{x_0}(t))\leq f(h_{x_0}(\tdd))$.
\item for $t\in(\tdd,\tdd+\delta]$, $h_{x_0}(t)\in\ball(h_{x_0}(\tdd),r)$. Therefore $f(h_{x_0}(1))\leq f(h_{x_0}(t))< f(h_{x_0}(\tdd))$.
\end{itemize}

Now we construct another $h^*$ as
\begin{align*}
h^*(t)=\left\{
\begin{array}{l}
h_{x_0}(t),
 \hspace{4em} \text{if}~t\in[0,1]\setminus [\tdd-\delta,\tdd+\delta]\\
\frac{\tdd+\delta-t}{2\delta}h_{x_0}(\tdd-\delta) + \frac{t-\tdd+\delta}{2\delta}h_{x_0}(\tdd+\delta),\\
 \hspace{7.05em}  \text{if}~t\in [\tdd-\delta,\tdd+\delta].
\end{array}
\right.
\end{align*}
It is easy to verify $h^*$ is continuous.
For $t\in[\tdd-\delta,\tdd+\delta]$, $h^*(t)$ is the convex combination of $h_{x_0}(\tdd-\delta)$ and $h_{x_0}(\tdd+\delta)$, and must be within $\ball(h_{x_0}(\tdd),r)\cap\XR$, which is convex.
Therefore, $h^*$ is entirely within $\XR$ and $f(h^*(t))\geq f(h_{x_0}(1))=f(h^*(1))$ holds for all $t$.

Next, we are showing $L(h^*)<L(h_{x_0})$ by \eqref{eq:Lhstar}.
\begin{figure*}
\begin{align}
\nonumber
L(h^*)=&\sup_{\pi\in\Pi}L_\pi(h^*)
=\sup_{\pi\in\Pi|_{0}^{\tdd-\delta}}L_\pi(h^*)+\sup_{\pi\in\Pi|_{\tdd-\delta}^{\tdd+\delta}}L_\pi(h^*)+\sup_{\pi\in\Pi|_{\tdd+\delta}^{1}}L_\pi(h^*)\\
\nonumber
=&\sup_{\pi\in\Pi|_{0}^{\tdd-\delta}}L_\pi(h_{x_0})+  \|h_{x_0}(\tdd-\delta)-h_{x_0}(\tdd+\delta)\|_{\ell_2} +\sup_{\pi\in\Pi|_{\tdd+\delta}^{1}}L_\pi(h_{x_0})\\
\nonumber
<&\sup_{\pi\in\Pi|_{0}^{\tdd-\delta}}L_\pi(h_{x_0})+  \|h_{x_0}(\tdd-\delta)-h_{x_0}(\tdd)\|_{\ell_2} + \|h_{x_0}(\tdd)-h_{x_0}(\tdd+\delta)\|_{\ell_2} +\sup_{\pi\in\Pi|_{\tdd+\delta}^{1}}L_\pi(h_{x_0})\\
\leq&\sup_{\pi\in\Pi|_{0}^{\tdd-\delta}}L_\pi(h_{x_0})+ \sup_{\pi\in\Pi|_{\tdd-\delta}^{\tdd+\delta}}L_\pi(h_{x_0}) +\sup_{\pi\in\Pi|_{\tdd+\delta}^{1}}L_\pi(h_{x_0})
=\sup_{\pi\in\Pi}L_\pi(h_{x_0})=L(h_{x_0}).
\label{eq:Lhstar}
\end{align}
\hrule
\end{figure*}
The strict inequality in \eqref{eq:Lhstar} is because of \cref{co:triangle}.

Above all, the arc-length reparameterization of $h^*$, denoted as $\bhs$, is feasible to \eqref{eq:h} but achieves a strictly lower cost than $h_{x_0}$.
This contradicts to the optimality of $h_{x_0}$.
\end{proof}
\subsection{Verification}
\subsubsection{To show $V$ satisfies \cref{def:Lyapunov}}
It is sufficient to show $V$ is continuous in $x$. The proof is twofold.
To abuse the notations a little bit, we let $h_x(t) \equiv x$ for $x\in\X$, so such $h_x$ is the unique minimizer of \eqref{eq:Vdef} and $L(h_x)=V(x)=0$ for $x\in\X$. 

First we show for $x_0\in\XR$ and $\epsilon>0$, there exists $\deltap>0$ such that $\forall x\in \ball(x_0,\deltap)\cap\XR$, $V(x)\leq V(x_0)+\epsilon$.
There are two scenarios.
If $h_{x_0}(1)$ is a global optimum of \eqref{eq:opt},
then we could set $\deltap=\epsilon$. 
For any $x\in \ball(x_0,\deltap)\cap\XR$, construct
\begin{align*}
h^*(t)=\left\{
\begin{array}{ll}
(1-2t)x+ 2tx_0, & t\in[0,\frac{1}{2}]\\
h_{x_0}(2t-1), & t\in (\frac{1}{2},1].
\end{array}
\right.
\end{align*}
Its arc-length reparameterization $\bhs$ is feasible to \eqref{eq:Vdef} (w.r.t. $x$) and $V(x)\leq L(\bhs)=|x-x_0|+L(h_{x_0})\leq V(x_0)+\epsilon$.
Next we focus on the scenario that $h_{x_0}(1)$ is not a global optimum of \eqref{eq:opt}, so it is not a local optimum neither.
By \cref{lm:short_curve}, there is a path $\hxtra$ in $\X$ such that $\hxtra(0)=h_{x_0}(1)$, $f(\hxtra(t))$ is non-increasing in $t$, $f(\hxtra(1))<f(\hxtra(0))$ and $L(\hxtra)<\epsilon/2$.
Suppose $f(\hxtra(0))-f(\hxtra(1))=\tau>0$.
Since $f$ is continuous, there must be some $\gamma>0$ such that for any $x\in\ball(x_0,\gamma)\cap\XR$,  we have $|f(x)-f(x_0)|<\tau$.
Now we choose $\deltap$ as $\min(\gamma,\epsilon/2)$.
For any $x\in\ball(x_0,\deltap)\cap\XR$, construct
\begin{align*}
h^*(t)=\left\{
\begin{array}{ll}
(1-3t)x+ 3tx_0, & t\in[0,\frac{1}{3}]\\
h_{x_0}(3t-1), & t\in (\frac{1}{3},\frac{2}{3}]\\
\hxtra(3t-2), & t\in (\frac{2}{3},1].
\end{array}
\right.
\end{align*}
Its arc-length reparameterization $\bhs$ is feasible to \eqref{eq:Vdef} (w.r.t. $x$) and $V(x)\leq L(\bhs)=|x-x_0|+L(h_{x_0})+L(\hxtra)\leq \deltap + V(x_0)+\epsilon/2\leq V(x_0)+\epsilon$.

Second we show for $x_0\in\XR$ and $\epsilon>0$, there exists $\deltam>0$ such that $\forall x\in \ball(x_0,\deltam)\cap\XR$, $V(x)\geq V(x_0)-\epsilon$.
If not, then there must be a sequence $(x_i)_{i=1}^\infty$ such that $\lim_{i\to\infty}x_i=x_0$ but $V(x_i)<V(x_0)-\epsilon$ for all $i\geq 1$.
Let $h_i:=h_{x_i}$ for $i\geq 0$, then both $(h_i)_{i=1}^\infty$ and $(L(h_i))_{i=1}^\infty$ are uniformly bounded.
By \cref{lm:h_convergence}, a subsequence of $(h_i)_{i=1}^\infty$ uniformly converges to a limit $h^*$ and 
\begin{align*}
L(h^*)=&L(\bhs)\leq\limsup_{i}L(h_i)\\
=&\limsup_{i}V(x_i) \leq V(x_0)-\epsilon.
\end{align*}
\cref{lm:h_convergence} also indicates $\bhs\in\tHset$ and
$h^*(0)=\lim_{i\to\infty}h_i(0)=\lim_{i\to\infty}x_i=x_0$, $h^*(1)=\lim_{i\to\infty}h_i(1)\in\X$ (as $\X$ is closed).
Therefore, $\bhs$ is feasible to \eqref{eq:Vdef} but its cost is strictly lower than $V(x_0)$.
It leads to the contradiction.
\subsubsection{To show \ref{C3} holds}
By our construction \eqref{eq:h}, $h_x$ is entirely within $\XR$, and $h_x(0)=x,~h_x(1)\in\X$.
\cref{lm:fht_non_incre} shows $f(h_x(t))$ is non-increasing for $t\in[0,1]$.
It is sufficient to show $V(h_x(t))$ is also non-increasing for $t\in[0,1]$.
Consider the following lemma.
\begin{lemma}
For fixed $x_0\in\XR$ and $t_0\in[0,1]$, 
\begin{align*}
V(h_{x_0}(t_0))=\sup_{\pi\in\Pi|_{t_0}^{1}}L_\pi(h_{x_0})
\end{align*}
\end{lemma}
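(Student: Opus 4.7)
The plan is to prove the identity by two inequalities. Throughout, I write $\tV(t_0):=\sup_{\pi\in\Pi|_{t_0}^{1}}L_\pi(h_{x_0})$ for short, and I use the standard additivity of arc length for a rectifiable path, namely $L(h_{x_0})=\sup_{\pi\in\Pi|_0^{t_0}}L_\pi(h_{x_0})+\sup_{\pi\in\Pi|_{t_0}^1}L_\pi(h_{x_0})$.

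For the upper bound $V(h_{x_0}(t_0))\leq \tV(t_0)$, I restrict $h_{x_0}$ to the interval $[t_0,1]$, rescale affinely to $[0,1]$, and take its arc-length reparameterization; call this path $\tpath$. Then $\tpath(0)=h_{x_0}(t_0)$, $\tpath(1)=h_{x_0}(1)\in\X$, and $L(\tpath)=\tV(t_0)$. I need to verify $\tpath\in\tHset$. Membership in $\bHset$ is automatic from the arc-length reparameterization. The condition $f(\tpath(s))\geq f(\tpath(1))$ for all $s\in[0,1]$ follows because the values taken by $\tpath$ are exactly the values taken by $h_{x_0}$ on $[t_0,1]$, and by the previously established \cref{lm:fht_non_incre} the function $f\circ h_{x_0}$ is non-increasing, so $f(h_{x_0}(t))\geq f(h_{x_0}(1))$ for every $t\in[t_0,1]$. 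Feeding $\tpath$ into the definition \eqref{eq:Vdef} of $V(h_{x_0}(t_0))$ yields the desired inequality.

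For the lower bound $V(h_{x_0}(t_0))\geq \tV(t_0)$, I argue by contradiction. Assume there exists $h'\in\tHset$ with $h'(0)=h_{x_0}(t_0)$, $h'(1)\in\X$, and $L(h')<\tV(t_0)$. I splice together the initial segment of $h_{x_0}$ on $[0,t_0]$ with $h'$, forming a continuous path from $x_0$ into $\X$; after arc-length reparameterization call it $\bhs$. The feasibility check for \eqref{eq:Vdef} (w.r.t.\ $x_0$) reduces to showing $f(\bhs(s))\geq f(\bhs(1))=f(h'(1))$ everywhere: on the $h'$-segment this is immediate from $h'\in\tHset$; on the $h_{x_0}$-segment $[0,t_0]$ it follows because $f(h_{x_0}(t))\geq f(h_{x_0}(t_0))=f(h'(0))\geq f(h'(1))$ by \cref{lm:fht_non_incre} combined with $h'\in\tHset$. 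By arc-length additivity the total length satisfies
\begin{align*}
L(\bhs)=\sup_{\pi\in\Pi|_0^{t_0}}L_\pi(h_{x_0})+L(h')<\sup_{\pi\in\Pi|_0^{t_0}}L_\pi(h_{x_0})+\tV(t_0)=L(h_{x_0})=V(x_0),
\end{align*}
contradicting the optimality of $h_{x_0}$ in \eqref{eq:h}.

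The only subtle point is that $\tHset$ requires arc-length parameterization, so both constructions (restriction and concatenation) must be composed with the $h\mapsto\bh$ operation; since this operation preserves total length and function image, neither the feasibility check nor the length comparison is affected. With both inequalities established, the claimed identity follows.
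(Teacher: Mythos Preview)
Your proof is correct and follows essentially the same approach as the paper. The paper also proves the identity via two inequalities, arguing both by contradiction: if $V(h_{x_0}(t_0))<\tV(t_0)$ it concatenates the initial segment of $h_{x_0}$ with the minimizer $h_{x_1}$ (your lower-bound construction with $h'=h_{x_1}$), and if $V(h_{x_0}(t_0))>\tV(t_0)$ it uses the tail of $h_{x_0}$ as a competitor at $x_1$ (your direct upper-bound construction). The only cosmetic difference is that you phrase the upper bound as a direct feasibility argument rather than a contradiction, and you invoke \cref{lm:fht_non_incre} where the paper sometimes relies only on $h_{x_0}\in\tHset$; both are valid.
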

\begin{proof}
Let $x_1=h_{x_0}(t_0)$. We have $L(h_{x_1})=V(h_{x_0}(t_0))$. If the lemma does not hold, then we have two cases.

First, if $L(h_{x_1})<\sup_{\pi\in\Pi|_{t_0}^{1}}L_\pi(h_{x_0})$, then let
\begin{align*}
h^*(t)=\left\{
\begin{array}{ll}
h_{x_0}(2t_0t), & t\in[0,\frac{1}{2}]\\
h_{x_1}(2t-1), & t\in (\frac{1}{2},1].
\end{array}
\right.
\end{align*}
It is easy to check $h^*$ is continuous and entirely within $\XR$, and $h^*(0)=x_0, h^*(1)=h_{x_1}(1)\in\X$. For $t\in[0,1/2]$, 
\begin{align*}
f(h^*(t)) = &f(h_{x_0}(2t_0t)) \geq f(h_{x_0}(t_0)) = f(x_1)\\
 = &f(h_{x_1}(0))
 \geq f(h_{x_1}(1)) = f(h^*(1)).
\end{align*}
For $t\in[1/2,1]$,
\begin{align*}
f(h^*(t)) = f(h_{x_1}(2t-1)) \geq f(h_{x_1}(1)) = f(h^*(1)).
\end{align*}
Further, we have
\begin{align*}
L(h^*)=&\sup_{\pi\in\Pi}L_\pi(h^*)\\
=&\sup_{\pi\in\Pi|_{0}^{0.5}}L_\pi(h^*)+\sup_{\pi\in\Pi|_{0.5}^{1}}L_\pi(h^*)\\
=&\sup_{\pi\in\Pi|_{0}^{t_0}}L_\pi(h_{x_0})+\sup_{\pi\in\Pi}L_\pi(h_{x_1})\\
=&\sup_{\pi\in\Pi|_{0}^{t_0}}L_\pi(h_{x_0})+L(h_{x_1})\\
<&\sup_{\pi\in\Pi|_{0}^{t_0}}L_\pi(h_{x_0})+\sup_{\pi\in\Pi|_{t_0}^{1}}L_\pi(h_{x_0})=L(h_{x_0}).
\end{align*}
Above all, the arc-length reparameterization of $h^*$, denoted as $\bhs$, is feasible to \eqref{eq:h} (w.r.t. $x_0$) but achieves a strictly lower cost than $h_{x_0}$.
This contradicts to the optimality of $h_{x_0}$.

Second, if $L(h_{x_1})>\sup_{\pi\in\Pi|_{t_0}^{1}}L_\pi(h_{x_0})$, then let
\begin{align*}
h^*(t)=\left\{
\begin{array}{ll}
h_{x_0}(t_0), & t\in[0,t_0]\\
h_{x_0}(t), & t\in (t_0,1].
\end{array}
\right.
\end{align*}
It is easy to check $h^*$ is continuous and entirely within $\XR$, and $h^*(0)=h_{x_0}(t_0)=x_1, h^*(1)=h_{x_0}(1)\in\X$. 
For $t\in[0,1]$, $f(h_{x_0}(t))\geq f(h_{x_0}(1))$ implies $f(h^*(t))\geq f(h_{x_0}(1))=f(h^*(1))$.
Further, we have
\begin{align*}
&L(h^*)=\sup_{\pi\in\Pi}L_\pi(h^*)=\sup_{\pi\in\Pi|_{0}^{t_0}}L_\pi(h^*)+\sup_{\pi\in\Pi|_{t_0}^{1}}L_\pi(h^*)\\
=&0+\sup_{\pi\in\Pi|_{t_0}^{1}}L_\pi(h_{x_0})<L(h_{x_1}).
\end{align*}
Above all, the arc-length reparameterization of $h^*$, denoted as $\bhs$, is feasible to \eqref{eq:h} (w.r.t. $x_1$) but achieves a strictly lower cost than $h_{x_1}$.
This contradicts to the optimality of $h_{x_1}$.
\end{proof}

Using this lemma, we are in a good position to show $V(h_x(t))$ is non-increasing for $t\in[0,1]$.
For any $t_1<t_2$, we have
\begin{align*}
V(h_x(t_1)) = &\sup_{\pi\in\Pi|_{t_1}^{1}}L_\pi(h_{x})
= \sup_{\pi\in\Pi|_{t_1}^{t_2}}L_\pi(h_{x}) + \sup_{\pi\in\Pi|_{t_2}^{1}}L_\pi(h_{x})\\
\geq &\sup_{\pi\in\Pi|_{t_2}^{1}}L_\pi(h_{x}) =V(h_x(t_2)).
\end{align*}

\subsubsection{To show \ref{C4} holds}
The set $\{L(h_x)\}_{x\in\XR\setminus\X}$ is uniformly bounded by $\max_{x\in\XR}|x-x^*|$, which is finite.

To summarize, we have verified that such construction is well defined and satisfies both \ref{C3} and \ref{C4}, so \cref{lm:weaker_nece} is proved.
Since we have shown that \cref{lm:weaker_nece} implies \cref{thm:nece}, the latter is also proved.

\section{Other Properties}

\subsection{Constructing from Primitives}

\begin{table*}
\caption{A summary on constructing $V$ and $h_x$ from primitives.}
\begin{center}
\begin{tabular}{|m{25mm}|m{26mm}|m{35mm}|m{35mm}|m{30mm}|}
\hline
Operation & Primitive problem & New problem & $V,h_x$ for new problem & Additional requirements \\
\hline
Function Composition & $(f,\X,\XR)$: $V,h_x$ & $(g\circ f,\X,\XR)$ & $\tV:=V\newline \tpath_x:=h_x$ & $g$ is non-decreasing and convex\\
\hline
Union of feasible sets (with cost as the sum) & 
$(f_1,\X_1,\XR_1)$: $V^1,h_x^1$\newline $(f_2,\X_2,\XR_2)$: $V^2,h_x^2$ &
$(f,\X,\XR)$\newline where $f:=\lambda f_1+(1-\lambda)f_2$\newline $\X:=(\X_1\cup\X_2)\cap\XR_1\cap\XR_2$\newline $\XR:=\XR_1\cap\XR_2$&
$\tV:=V^1\times V^2\newline \tpath_x:=h_x^1$ &
$h_x^1$ and $h_x^2$ coincide for $x\in\XR\setminus\X$ \\
\hline
Union of feasible sets (with cost as the maximum)& 
$(f_1,\X_1,\XR_1)$: $V^1,h_x^1$\newline $(f_2,\X_2,\XR_2)$: $V^2,h_x^2$ &
$(f,\X,\XR)$\newline where $f:=\max(f_1,f_2)$\newline $\X:=(\X_1\cup\X_2)\cap\XR_1\cap\XR_2$\newline $\XR:=\XR_1\cap\XR_2$&
$\tV:=V^1\times V^2\newline \tpath_x:=h_x^1$ &
$h_x^1$ and $h_x^2$ coincide for $x\in\XR\setminus\X$ \\
\hline
Intersection of feasible sets (with cost as the sum)& 
$(f_1,\X_1,\XR)$: $V^1,h_x^1$\newline $(f_2,\X_2,\XR)$: $V^2,h_x^2$ \newline $(u,v)=:x\in\XR$&
$(f,\X,\XR)$\newline where $f:=\lambda f_1+(1-\lambda)f_2$\newline $\X:=\X_1\cap\X_2$&
$\tV:=V^1+ V^2\newline \tpath_x$ is defined as in \eqref{eq:path_for_ifs} &
$f_i(x), V^i(x)$ depend on $\proj_i x$ only and $\proj_{1-i} h_x^i$ is constant\\
\hline
Intersection of feasible sets (with cost as the maximum)& 
$(f_1,\X_1,\XR)$: $V^1,h_x^1$\newline $(f_2,\X_2,\XR)$: $V^2,h_x^2$ \newline $(u,v)=:x\in\XR$&
$(f,\X,\XR)$\newline where $f:=\max(f_1,f_2)$\newline $\X:=\X_1\cap\X_2$&
$\tV:=V^1+ V^2\newline \tpath_x$ is defined as in \eqref{eq:path_for_ifs} &
$f_i(x), V^i(x)$ depend on $\proj_i x$ only and $\proj_{1-i} h_x^i$ is constant\\
\hline
\end{tabular}
\end{center}
\label{tb:summary}
\end{table*}

Though the previous section guarantees the existence of the Lyapunov-like function and paths under certain conditions,
it is not clear how to systematically find or construct them. 
In this subsection, we show that if one can find the Lyapunov-like function and paths for some primitive problems, 
then there are natural ways to construct the Lyapunov-like function and paths for new problems built up from those primitives in certain ways.
To streamline the notations, we will use the tuple $(f,\X)$ to refer to \eqref{eq:opt} and the tuple $(f,\X,\XR)$ to refer to the problem pair \eqref{eq:opt}, \eqref{eq:optR}.
Assume $(V,\{h_x\}_{x\in\XR\setminus\X})$ is a valid construction of Lyapunov-like function and paths for $(f,\X,\XR)$.
In this subsection, when we say $V$ and $h_x$ are valid, it means they not only are valid by definition, but also satisfy \ref{C1} and \ref{C2}.

\subsubsection{Function Composition}
Suppose $g:\Real\to\Real$ is non-decreasing and convex.
Then $(V,\{h_x\}_{x\in\XR\setminus\X})$ is also a valid construction of Lyapunov-like function and paths for $(g\circ f,\X,\XR)$.
This result is trivial as $g\circ f$ preserves the convexity over $\XR$ and monotonicity over any path.


\subsubsection{Union of Feasible Sets}
Suppose for two pairs of problems $(f_1,\X_1,\XR_1)$, for which $(V^1,\{h_x^1\}_{x\in\XR_1\setminus\X_1})$ is valid, 
and $(f_2,\X_2,\XR_2)$, for which $(V^2,\{h_x^2\}_{x\in\XR_2\setminus\X_2})$ is valid.
We consider a new problem $(f,\X,\XR)$
where $\X:=(\X_1\cup\X_2)\cap\XR_1\cap\XR_2$ and $\XR:=\XR_1\cap\XR_2$.
The formulation of $f$ will be provided later.
If for any $x\in\XR\setminus\X$, we have $h_x^1\equiv h_x^2$,
then construct $\tV:\XR\to\Real$ such that $\tV(x):=V^1(x)\cdot V^2(x)$
and $\tpath_x=h_x^1$ for all $x\in\XR\setminus\X$.
We have the following two results.
\begin{corollary}\label{thm:U1}
For any $\lambda\in(0,1)$, define $f:\XR\to\Real$ as $f(x):=\lambda f_1(x)+ (1-\lambda) f_2(x)$.
Then $(\tV,\{\tpath_{x}\}_{x\in\XR\setminus\X})$ is valid for $(f,\X,\XR)$.
\end{corollary}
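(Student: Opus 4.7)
The plan is to verify the three requirements for $(\tV,\{\tpath_x\}_{x\in\XR\setminus\X})$ to be valid for $(f,\X,\XR)$: namely, that $\tV$ is a Lyapunov-like function per \cref{def:Lyapunov}, that each $\tpath_x$ satisfies \ref{C1}, and that the family $\{\tpath_x\}$ satisfies \ref{C2}. The key structural observation driving the whole argument is that a point $x \in \XR \setminus \X$ necessarily satisfies $x \in \XR_1 \cap \XR_2$ while $x \notin \X_1$ and $x \notin \X_2$ (otherwise $x$ would lie in $(\X_1 \cup \X_2) \cap \XR_1 \cap \XR_2 = \X$). Hence $x \in (\XR_1\setminus\X_1) \cap (\XR_2\setminus\X_2)$, so both primitive paths $h_x^1$ and $h_x^2$ are defined, and by the standing hypothesis they coincide.

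For the Lyapunov-like property, I would first note that $\tV = V^1 \cdot V^2$ is continuous on $\XR$ because each $V^i$ is continuous on $\XR_i \supseteq \XR$. If $x\in\X$, then $x$ lies in $\X_1$ or in $\X_2$, so at least one factor vanishes and $\tV(x) = 0$. The observation above immediately gives $V^1(x), V^2(x) > 0$ whenever $x \in \XR\setminus\X$, whence $\tV(x) > 0$. Note also that $f = \lambda f_1 + (1-\lambda) f_2$ inherits continuity and convexity on $\XR$ from $f_1, f_2$, so the setup of problem \eqref{eq:opt}--\eqref{eq:optR} is well-posed.

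For \ref{C1}, set $\tpath_x := h_x^1 \equiv h_x^2$. Since the path coincides with $h_x^1 \subseteq \XR_1$ and with $h_x^2 \subseteq \XR_2$, it lies in $\XR$. Its endpoint $h_x^1(1) \in \X_1 \cap \XR \subseteq \X$. The monotonicity $f(\tpath_x(t))$ non-increasing is immediate from the linearity of the combination and the monotonicity of each $f_i(h_x^i(t))$, and the strict inequality $f(\tpath_x(0)) > f(\tpath_x(1))$ follows from the corresponding strict inequalities for both $f_1$ and $f_2$. The one spot that requires a short check is monotonicity of $\tV(\tpath_x(t)) = V^1(\tpath_x(t)) \cdot V^2(\tpath_x(t))$: since both factors are nonnegative and non-increasing in $t$, the elementary bound $a_1 b_1 \geq a_2 b_2$ whenever $a_1 \geq a_2 \geq 0$ and $b_1 \geq b_2 \geq 0$ gives the desired monotonicity.

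Finally, \ref{C2} follows because $\{\tpath_x\}_{x \in \XR\setminus\X}$ is a sub-family of $\{h_x^1\}_{x \in \XR_1\setminus\X_1}$, which is uniformly bounded and uniformly equicontinuous by hypothesis. There is no serious obstacle here; the main subtlety is simply tracking the set-theoretic identity that reduces $\XR\setminus\X$ to $(\XR_1\setminus\X_1)\cap(\XR_2\setminus\X_2)$ and carefully exploiting the assumption $h_x^1 \equiv h_x^2$ to place the common path inside the intersection relaxation $\XR$.
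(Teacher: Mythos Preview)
Your proposal is correct and follows essentially the same approach as the paper's proof: verify that $\tV=V^1\cdot V^2$ is a Lyapunov-like function, use the coincidence $h_x^1\equiv h_x^2$ on $\XR\setminus\X$ to check \ref{C1}, and invoke the sub-family argument for \ref{C2}. If anything, you are slightly more careful than the paper in noting that the endpoint lies in $\X_1\cap\XR\subseteq\X$ (rather than the paper's shorthand ``$\X_1\subseteq\X$'') and in making explicit the set-theoretic identity $\XR\setminus\X=(\XR_1\setminus\X_1)\cap(\XR_2\setminus\X_2)$.
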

\begin{corollary}\label{thm:U2}
Define function $f:\XR\to\Real$ as $f(x):=\max( f_1(x), f_2(x))$.
Then $(\tV,\{\tpath_{x}\}_{x\in\XR\setminus\X})$ is valid for $(f,\X,\XR)$.
\end{corollary}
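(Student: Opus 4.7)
The plan is to verify directly that $(\tV, \{\tpath_x\}_{x\in\XR\setminus\X})$, as constructed, meets the Lyapunov-like definition (\cref{def:Lyapunov}) together with \ref{C1} and \ref{C2} for the new problem $(f,\X,\XR)$, borrowing the bulk of the argument from the companion \cref{thm:U1}. The only genuine difference between the two corollaries lies in how the strict cost-decrease $f(\tpath_x(0))>f(\tpath_x(1))$ is inherited when $f=\max(f_1,f_2)$ rather than a convex combination, so that is the single place where a new observation is required.

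First I would confirm that $(f,\X,\XR)$ is a well-posed instance of \eqref{eq:opt}--\eqref{eq:optR}: $\XR=\XR_1\cap\XR_2$ is compact and convex, $\X=(\X_1\cup\X_2)\cap\XR$ is compact, and $f=\max(f_1,f_2)$ is continuous and convex over $\XR$ because each $f_i$ is. Then I would check that $\tV=V^1\cdot V^2$ is Lyapunov-like: continuity follows from the product of continuous functions, and $\tV\ge 0$ since each factor is non-negative on $\XR\subseteq \XR_1\cap\XR_2$. A short case split handles the sign: for $x\in\X$ we have $x\in\X_1$ or $x\in\X_2$ and at least one of $V^i(x)$ vanishes; for $x\in\XR\setminus\X$ we have $x\notin\X_1\cup\X_2$, hence $x\in\XR_i\setminus\X_i$ for $i=1,2$, so both $V^i(x)>0$.

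Next I would check \ref{C1}. Because $h_x^1\equiv h_x^2$ by hypothesis, the common path $\tpath_x$ lies in $\XR_1\cap\XR_2=\XR$, satisfies $\tpath_x(0)=x$ and $\tpath_x(1)=h_x^1(1)\in\X_1\cap\XR\subseteq\X$. Monotonicity of $f(\tpath_x(t))=\max(f_1(\tpath_x(t)),f_2(\tpath_x(t)))$ in $t$ follows because the pointwise maximum of two non-increasing functions is non-increasing, and monotonicity of $\tV(\tpath_x(t))=V^1(\tpath_x(t))V^2(\tpath_x(t))$ follows because the product of two non-negative non-increasing functions is non-increasing. For the strict decrease, let $a_i=f_i(\tpath_x(0))$, $b_i=f_i(\tpath_x(1))$; by validity of the primitive paths we have $a_i>b_i$ for $i=1,2$. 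Assuming without loss of generality $a_1\ge a_2$, one has $\max(b_1,b_2)<a_1=\max(a_1,a_2)$ since $b_1<a_1$ and $b_2<a_2\le a_1$. This is the one step peculiar to the $\max$ formulation.

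Finally, \ref{C2} is immediate: $\{\tpath_x\}_{x\in\XR\setminus\X}$ is a subfamily of $\{h_x^1\}_{x\in\XR_1\setminus\X_1}$ (via the inclusion $\XR\setminus\X\subseteq\XR_1\setminus\X_1$), so uniform boundedness and uniform equicontinuity are inherited from the primitive. I do not anticipate a serious obstacle; the argument is essentially a transcription of the proof of \cref{thm:U1} with the convex-combination identity replaced by the two-line $\max$ inequality above, and the rest of the verification is routine.
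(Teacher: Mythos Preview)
Your proposal is correct and follows essentially the same approach as the paper's combined proof of \cref{thm:U1} and \cref{thm:U2}: verify $\tV$ is Lyapunov-like via the product structure, inherit \ref{C2} from the inclusion $\{\tpath_x\}\subseteq\{h_x^1\}$, and check \ref{C1} using that $h_x^1\equiv h_x^2$ forces the path into $\XR_1\cap\XR_2$ with both $f_i$ and $V^i$ non-increasing along it. Your explicit argument for the strict inequality $\max(b_1,b_2)<\max(a_1,a_2)$ is in fact more careful than the paper, which simply asserts that ``a similar argument'' handles the strict decrease for the maximum.
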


\begin{proof}[Proof for \cref{thm:U1} and \cref{thm:U2}]
The function $\tV$ is still continuous and vanishes if and only if $x\in\X$ (since $V(x)=0$ $\Leftrightarrow$ $V^1(x)=0$ {\it or} $V^2(x)=0$).
By construction, $\{h_x\}_{x\in\XR\setminus\X}$ is a subset of $\{h^1_x\}_{x\in\XR_1\setminus\X_1}$ so \ref{C2} is naturally satisfied.
To see \ref{C1} holds, we fix any $x\in\XR\setminus\X$. 
Then $h_x(0)=h_x^1(x)=x$ and $h_x(1)=h_x^1(1)\in\X_1\subseteq\X$.
Further, $V(h_x(t))=V^1(h_x(t))V^2(h_x(t))=V^1(h_x^1(t))V^2(h_x^2(t))$ as $h_x^1$ and $h_x^2$ conincide when $x\in\XR\setminus\X$.
Because both $V^1(h_x^1(t))$ and $V^2(h_x^2(t))$ are non-negative and non-increasing, so is $V(h_x(t))$.
Finally, as $f_1(h_x(t))$ and $f_2(h_x(t))$ are both non-increasing over $[0,1]$, their convex-combination or maximum (i.e., $f(h_x(t))$) must be non-increasing as well.
A similar argument can also be applied to show $f(h_x(1))<f(h_x(0))$. Thus \ref{C1} holds and it completes the proof.
\end{proof}

\subsubsection{Intersection of Feasible Sets}
We still consider two pairs of problems $(f_1,\X_1,\XR)$, for which $(V^1,\{h_x^1\}_{x\in\XR_1\setminus\X_1})$ is valid, 
and $(f_2,\X_2,\XR)$, for which $(V^2,\{h_x^2\}_{x\in\XR_2\setminus\X_2})$ is valid.
Different from the previous setting, two pairs are required to share the same relaxation set $\XR$.
Further, we view each $x\in\XR$ as a tuple with two parts $x:=(u,v)$.
Define $\proj_1$ and $\proj_2$ as two projection operators such that $\proj_1 x = u$ and $\proj_2 x=v$.

We consider a new problem $(f,\X,\XR)$
where $\X:=\X_1\cap\X_2$.
The formulation of $f$ will be provided later.

If $f_i, V^i$ and $h_x^i$ are completely separated with respect to $u$ and $v$ in the sense that for $i=1,2$,
$f_i(x), V^i(x)$ depend on $\proj_i x$ only and $\proj_{1-i} (h_x^i(t))$ is constant,
then we can construct $\tV$ as $\tV(x):=V^1(x)+ V^2(x)$.
For $x\in\XR\setminus\X$, the path $\tpath_x$ is constructed in three ways depending on the values of $V^1(x)$ and $V^2(x)$.
\begin{subequations}
\begin{eqnarray}
\label{eq:path_for_ifs.a}
&&\text{If}~V^1(x)=0~\text{then}~\tpath_x:=h_x^2,\\
\label{eq:path_for_ifs.b}
&&\text{If}~V^2(x)=0~\text{then}~\tpath_x:=h_x^1,\\
\nonumber
&&\text{If}~V^1(x),V^2(x)>0~\text{then}\\
\label{eq:path_for_ifs.c}
&&\hspace{5em} \tpath_x(t):=\left\{
\begin{array}{ll}
h_x^1(2t), & t\in[0,\frac{1}{2})\\
h_{h_x^1(1)}^2(2t-1), & t\in[\frac{1}{2},1]
\end{array}
\right..
\hspace{2em} 
\end{eqnarray}
\label{eq:path_for_ifs}
\end{subequations}
\begin{corollary}\label{thm:I1}
For any $\lambda\in(0,1)$, define $f:\XR\to\Real$ as $f(x):=\lambda f_1(x)+ (1-\lambda) f_2(x)$.
Then $(\tV,\{\tpath_{x}\}_{x\in\XR\setminus\X})$ is valid for $(f,\X,\XR)$.
\end{corollary}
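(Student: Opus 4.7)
The plan is to verify in turn the three requirements of the construction: that $\tV$ is a Lyapunov-like function in the sense of \cref{def:Lyapunov}; that the family $\{\tpath_x\}_{x\in\XR\setminus\X}$ satisfies \ref{C1}; and that it satisfies \ref{C2}. Throughout, the key leverage comes from the separation assumption that $f_i$ and $V^i$ depend only on $\proj_i x$ while $\proj_{1-i}(h^i_x(t))$ is constant in $t$, so each primitive path moves only its own ``relevant'' coordinate.

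For $\tV(x) = V^1(x) + V^2(x)$, continuity and nonnegativity are immediate from the corresponding properties of $V^1, V^2$. Because $V^i(x) = 0$ iff $x \in \X_i$, we obtain $\tV(x) = 0$ iff $x \in \X_1 \cap \X_2 = \X$, which is exactly what is needed.

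The main work is to verify \ref{C1} in each of the three construction branches \eqref{eq:path_for_ifs.a}--\eqref{eq:path_for_ifs.c}. In branch (a), $V^1(x) = 0$ so $x \in \X_1$; since $\proj_1$ is constant along $h^2_x$ and $V^1, f_1$ depend only on $\proj_1$, we get $V^1 \equiv 0$ and $f_1 \equiv f_1(x)$ along $h^2_x$. This gives $h^2_x(t) \in \X_1$ throughout, $h^2_x(1)\in\X_1\cap\X_2=\X$, $\tV$ reduces to $V^2$ (non-increasing), and $f = \lambda f_1(x) + (1-\lambda)f_2(h^2_x(t))$ inherits monotonicity and strict decrease from $f_2 \circ h^2_x$; branch (b) is symmetric. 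Branch (c) is the interesting one: on the first half I would use $\proj_2$-constancy to argue $V^2 \equiv V^2(x)$ and $f_2 \equiv f_2(x)$ along $h^1_x$, so that $\tV$ decreases via $V^1$ and $f$ decreases via $f_1$. At the midpoint, $V^1(h^1_x(1)) = 0$ but $V^2(h^1_x(1)) = V^2(x) > 0$, so $h^1_x(1) \in \XR \setminus \X_2$, which is precisely the domain on which $h^2_{h^1_x(1)}$ is defined; moreover the two pieces match in value at $t=1/2$, preserving continuity. On the second half, $\proj_1$-constancy keeps $V^1 \equiv 0$ and $f_1 \equiv f_1(h^1_x(1))$, so $\tV$ and $f$ remain non-increasing, and the endpoint lies in $\X_1\cap\X_2=\X$. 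Strict decrease $f(\tpath_x(0)) > f(\tpath_x(1))$ follows from the strict decrease of $f_1$ along $h^1_x$ and of $f_2$ along $h^2_{h^1_x(1)}$.

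For \ref{C2}, uniform boundedness of $\{\tpath_x\}$ is free from compactness of $\XR$. Uniform equicontinuity is the main subtlety: the concatenation \eqref{eq:path_for_ifs.c} reparameterizes each half at double speed, so if each primitive family $\{h^i_x\}$ is uniformly equicontinuous with modulus $\delta(\epsilon)$, then $\{\tpath_x\}$ is uniformly equicontinuous with modulus $\delta(\epsilon)/2$. The one delicate point is that the second half of branch (c) is drawn from the primitive family indexed by a possibly new base point $h^1_x(1)$, but this point still lies in $\XR \setminus \X_2$ by the argument above, so the equicontinuity hypothesis for the second primitive applies directly. The main obstacle I anticipate is not analytical depth but careful bookkeeping: ensuring the midpoint matches, and invoking the separation assumption in the correct direction for each of $V^i$ and $f_i$ on each half of each branch.
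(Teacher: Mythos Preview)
Your proposal is correct and follows essentially the same route as the paper: verify that $\tV=V^1+V^2$ is a Lyapunov-like function, then check \ref{C1} branch by branch using the separation hypothesis (so that along $h^i_x$ the ``other'' functions $V^{3-i},f_{3-i}$ stay constant), and finally obtain \ref{C2} from the fact that each $\tpath_x$ is either a primitive path or a concatenation of two primitive paths at double speed. In fact you supply more detail than the paper does---the paper merely asserts the monotonicity of $\tV(\tpath_x(t))$ and $f(\tpath_x(t))$ as ``direct consequences'' of separation and disposes of \ref{C2} in one sentence, whereas you spell out the midpoint matching, the strict decrease, and the $\delta(\epsilon)/2$ modulus explicitly.
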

\begin{corollary}\label{thm:I2}
Define function $f:\XR\to\Real$ as $f(x):=\max( f_1(x), f_2(x))$.
Then $(\tV,\{\tpath_{x}\}_{x\in\XR\setminus\X})$ is valid for $(f,\X,\XR)$.
\end{corollary}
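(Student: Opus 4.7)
The plan is to follow the same template used for \cref{thm:I1} and the union-of-feasible-sets corollaries, verifying each ingredient of Condition (C) for the new tuple $(f,\X,\XR)$ with $f=\max(f_1,f_2)$, and treating separately the three cases \eqref{eq:path_for_ifs.a}--\eqref{eq:path_for_ifs.c} in the definition of $\tpath_x$. The separability hypothesis---that $f_i(x)$ and $V^i(x)$ depend only on $\proj_i x$ and that $\proj_{1-i} h_x^i$ is constant---is what decouples the two coordinates along each primitive path, and it is the workhorse of the whole argument.

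First I would verify that $\tV:=V^1+V^2$ is a legitimate Lyapunov-like function: it is continuous as a sum of continuous functions, non-negative, and vanishes at $x$ if and only if $V^1(x)=V^2(x)=0$, i.e., iff $x\in\X_1\cap\X_2=\X$. Next, the continuity of $\tpath_x$ in case \eqref{eq:path_for_ifs.c} reduces to matching the two pieces at $t=1/2$, namely $h^1_x(1)=h^2_{h^1_x(1)}(0)$, which is immediate. The endpoint condition $\tpath_x(1)\in\X$ follows because, by separability, the first segment moves $\proj_1$ into the $\X_1$-feasible region while leaving $\proj_2$ fixed, and the second segment then moves $\proj_2$ into the $\X_2$-feasible region while leaving $\proj_1$ fixed; cases \eqref{eq:path_for_ifs.a} and \eqref{eq:path_for_ifs.b} are analogous, with the relevant coordinate already feasible from the start.

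For the monotonicity clauses of \ref{C1}, separability gives that along an $h^1$-segment only $V^1$ and $f_1$ change (both non-increasing, by the primitive \ref{C1}), while $V^2,f_2$ remain constant; symmetrically along the $h^2$-segment. Consequently $\tV=V^1+V^2$ is non-increasing, and $f=\max(f_1,f_2)$, being the pointwise max of two non-increasing functions each of which is constant on one of the two segments, is non-increasing as well. The main obstacle is the strict inequality $f(\tpath_x(0))>f(\tpath_x(1))$ demanded by \ref{C1}: in case \eqref{eq:path_for_ifs.c} one applies the primitive \ref{C1} to both pieces to obtain $f_1(\tpath_x(1))<f_1(x)$ and $f_2(\tpath_x(1))<f_2(x)$, so the max strictly decreases; cases \eqref{eq:path_for_ifs.a} and \eqref{eq:path_for_ifs.b} are more delicate because only one of $f_1,f_2$ strictly decreases along the path, and strict decrease of the max requires that the decreasing component be the one attaining the maximum at $x$---this is where I expect the proof to invoke the full strength of the separability together with the fact that $V^i(x)=0$ already places $\proj_i x$ in the $\X_i$-projection and so constrains the relative sizes of $f_1(x)$ and $f_2(x)$.

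Finally, \ref{C2} would follow from the observation that each $\tpath_x$ is contained in the compact $\XR$ and is assembled from at most two pieces drawn from the uniformly equicontinuous families $\{h^1_x\}$ and $\{h^2_y\}$; after arc-length reparameterization and an appeal to \cref{lm:rect_equicon}, a common modulus of continuity transfers to $\{\tpath_x\}$. The hardest part of the argument, as noted, will be the strict monotonicity of the max in the degenerate cases \eqref{eq:path_for_ifs.a} and \eqref{eq:path_for_ifs.b}; everything else is a bookkeeping exercise enabled by the clean coordinate split provided by separability.
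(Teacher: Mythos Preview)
Your overall plan coincides with the paper's: verify that $\tV=V^1+V^2$ is a Lyapunov-like function, check the three cases of \eqref{eq:path_for_ifs} for continuity and endpoints, and deduce the monotonicity of $\tV$ and $f$ along $\tpath_x$ from separability. The paper's combined proof for \cref{thm:I1} and \cref{thm:I2} is terse---it handles $\tpath_x(1)\in\X$ explicitly and then simply asserts that ``the monotonicity properties of $\tV(\tpath_x(t))$ and $f(\tpath_x(t))$ are also the direct consequence of the fact that $f_i,V^i$ and $h_x^i$ are completely separated.''

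You are right to single out the strict inequality $f(\tpath_x(0))>f(\tpath_x(1))$ in cases \eqref{eq:path_for_ifs.a}--\eqref{eq:path_for_ifs.b} as the delicate point; the paper's proof does not address it at all. However, your anticipated fix---that $V^i(x)=0$ somehow constrains the relative sizes of $f_1(x)$ and $f_2(x)$---does not work. Separability means precisely that $f_1$ and $f_2$ depend on disjoint coordinates, so nothing ties their values together. Concretely, take $\XR=[0,1]^2$, $\X_1=\{0\}\times[0,1]$, $\X_2=[0,1]\times\{0\}$, $f_1(u,v)=u+10$, $f_2(u,v)=v$, $V^i=$ distance to $\X_i$ in the relevant coordinate, and linear paths $h^i$. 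All primitive data satisfy \ref{C1} and \ref{C2}. For $x=(0,\tfrac12)$ we are in case \eqref{eq:path_for_ifs.a}; along $\tpath_x=h_x^2$ the first coordinate is frozen, so $f_1\equiv 10$ dominates $f_2\le \tfrac12$ throughout and $f=\max(f_1,f_2)\equiv 10$. The strict inequality in \ref{C1} fails.

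So the gap you sensed is real, and it is shared by the paper's own proof: for the $\max$ combination in the intersection setting one obtains \ref{C3} (non-increasing $f$) rather than the full \ref{C1}. Your write-up should therefore either (i) downgrade the claim to \ref{C3}+\ref{C2}, which still yields weak exactness and the ``no genuine local optima'' conclusion via \cref{thm:weak_suff}, or (ii) add a hypothesis ruling out the degenerate cases---e.g., that neither $f_i$ is globally dominated by the other on $\XR$, or that $x\in\XR\setminus\X$ forces both $V^1(x),V^2(x)>0$ so that only case \eqref{eq:path_for_ifs.c} occurs. Everything else in your plan is correct and matches the paper.
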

\begin{proof}[Proof for \cref{thm:I1} and \cref{thm:I2}]
The function $\tV$ is still continuous and vanishes if and only if $x\in\X$ (since $V(x)=0$ $\Leftrightarrow$ $V^1(x)=0$ {\it and} $V^2(x)=0$).
The set $\{\tpath_{x}\}_{x\in\XR\setminus\X}$ satisfies \ref{C2} as each path is constructed either as $h_x^i$ or the concatenation of $h_x^1$ and $h_{h_x^1(1)}^2$.
Next we are showing $\tpath_x(1)$ is in $\X$.
If $\tpath_x$ is constructed by \eqref{eq:path_for_ifs.a},
then we have $\tV(\tpath_x(1))=V^1(h_x^2(1))+V^2(h_x^2(1))=V^1(h_x^2(1))$.
Since $V^1(h_x^2(1))$ only depends on $\proj_1 h_x^2(1)$ and $\proj_1 h_x^2(1)=\proj_1 h_x^2(0)=\proj_1 x$,
there must be $V^1(h_x^2(1))=V^1(x)=0$.
Thus $\tV(\tpath_x(1))=0$ and $\tpath_x(1)\in\X$.
It is similar if $\tpath_x$ is constructed by \eqref{eq:path_for_ifs.b}.
When $\tpath_x$ is constructed by \eqref{eq:path_for_ifs.c},
then 
\begin{align*}
\tV(\tpath_x(1))=&V^1(h_{h_x^1(1)}^2(1))+V^2(h_{h_x^1(1)}^2(1))\\
=&V^1(h_{h_x^1(1)}^2(1))=V^1(h_{h_x^1(1)}^2(0))\\
=&V^1(h_x^1(1))=0,
\end{align*}
so $\tpath_x(1)\in\X$ as well.
The monotonicity properties of $\tV(\tpath_x(t))$ and $f(\tpath_x(t))$ are also the direct consequence of the fact that $f_i, V^i$ and $h_x^i$ are completely separated.
\end{proof}

A summary of this subsection has been provided in \cref{tb:summary}.

\subsection{Weak Exactness}
One observation from the proof of \cref{thm:suff} is we do not actually need $f(h_x(0))>f(h_x(1))$ to eliminate genuine local optima.
However, such strict inequality is required to show the exactness.
We can consider a weaker version of exactness defined as follows.
\begin{definition}[Weak Exactness]\label{def:weak_exact}
We say the relaxation \eqref{eq:optR} is \emph{weakly exact} with respect to \eqref{eq:opt} if at least one optimum of \eqref{eq:optR} is feasible, 
and hence globally optimal, for \eqref{eq:opt}.
\end{definition}
\begin{theorem}\label{thm:weak_suff}
If there exists a Lyapunov-like function $V$ associated with \eqref{eq:opt} and \eqref{eq:optR} such that \ref{C3} and \ref{C2} hold,
then \eqref{eq:optR} is weakly exact with respect to \eqref{eq:opt}
and any local optimum in $\X$ for \eqref{eq:opt} is either a global optimum or a pseudo local optimum.
\end{theorem}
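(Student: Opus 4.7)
\medskip

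\noindent\textbf{Proof proposal.} The plan is to split the statement into its two claims and handle each separately, then observe that the proof of \cref{thm:suff} adapts almost verbatim once we identify where the strict inequality $f(h_x(0))>f(h_x(1))$ in \ref{C1} was actually used.

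For the weak exactness, I would take any optimizer $x^*$ of \eqref{eq:optR}. If $x^*\in\X$, there is nothing to do. Otherwise $x^*\in\XR\setminus\X$, so \ref{C3} provides a path $h_{x^*}:[0,1]\to\XR$ with $h_{x^*}(0)=x^*$, $h_{x^*}(1)\in\X$, and $f(h_{x^*}(t))$ non-increasing in $t$. The endpoint $h_{x^*}(1)$ lies in $\X\subseteq\XR$ and satisfies $f(h_{x^*}(1))\leq f(x^*)=\min_{x\in\XR}f(x)$, forcing equality. Hence $h_{x^*}(1)$ is an optimizer of \eqref{eq:optR} which is simultaneously feasible (and therefore optimal) for \eqref{eq:opt}, giving weak exactness per \cref{def:weak_exact}.

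For the second claim, I would replay the argument of \cref{thm:suff}. Assume for contradiction that $x\in\X$ is a local optimum that is neither global nor pseudo. Pick a global optimum $x^*$ and take $\ell(t):=(1-t)x+tx^*$; convexity of $f$ together with the fact that $x^*$ is also a global optimum of \eqref{eq:optR} on $\XR$ forces $f\circ\ell$ to be non-increasing and convex, with $f(\ell(1))<f(\ell(0))$. Define $\tlim$ exactly as before and derive $\ell(\tlim)=x$ from local optimality and convexity. Take the sequence $t_m\downarrow\tlim$ with $\ell(t_m)\in\XR\setminus\X$ and pass to a uniformly convergent subsequence of the paths $h_{\ell(t_m)}$ supplied by \ref{C3}, using \ref{C2} and Arzel\`a--Ascoli to produce a limit path $h$. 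The Lyapunov-like $V$ forces $h(t)\in\X$ for every $t$, and $h(0)=\ell(\tlim)=x$; the non-increasing property of $f\circ h$ survives the pointwise limit because only the non-increasing part of \ref{C1}, which is already present in \ref{C3}, is invoked here.

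The main (and really only) obstacle is to check that the original proof's strict inequality $f(h_m(1))<f(h(1))$, used to conclude $h(1)$ is not a local minimum, does not need the discarded hypothesis $f(h_x(0))>f(h_x(1))$. This is where I would be careful: in the proof of \cref{thm:suff} one has $f(h_m(1))\leq f(h_m(0))=f(\ell(t_m))<f(\ell(\tlim))=f(h(1))$, and the strict step is $f(\ell(t_m))<f(\ell(\tlim))$, which comes solely from the strict convexity argument along the line segment $\ell$ (using $f(\ell(1))<f(\ell(\tlim))$), not from \ref{C1}. Thus the monotonicity guaranteed by \ref{C3} is enough: either $f(h(1))<f(x)$ outright, or $f(h(1))=f(x)$ but $h(1)$ has \eqref{eq:opt}-feasible neighbors $h_m(1)$ with strictly smaller cost, so $h(1)$ is not a local optimum. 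In either case $x$ is improvable in $\X$, contradicting the assumption that it is a genuine local optimum and completing the proof.
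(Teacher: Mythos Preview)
Your proposal is correct and follows essentially the same approach as the paper: the paper's own proof is a two-sentence sketch that (i) for weak exactness, the path from any optimizer of \eqref{eq:optR} ends in $\X$ with the same cost, and (ii) the local-optimality claim follows directly from the proof of \cref{thm:suff}. Your write-up supplies exactly these arguments, and your careful observation that the strict inequality $f(\ell(t_m))<f(\ell(\tlim))$ in that proof comes from convexity along $\ell$ rather than from the discarded clause of \ref{C1} is precisely the point one must check; note also that once weak exactness is established the optimal values of \eqref{eq:opt} and \eqref{eq:optR} coincide, so any global optimum $x^*$ of \eqref{eq:opt} is automatically optimal for \eqref{eq:optR}, which justifies the line you invoke about $x^*$.
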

The argument on weak exactness follows from the fact that the path connects any global optimum of \eqref{eq:optR} must determine a endpoint in $\X$ with the same cost, 
which by definition must be a global optimum as well.
The argument on local optimality follows directly from the proof of \cref{thm:suff}.
\section{Applications}\label{sec:application}
In this sections we will use two examples to show for specific problems, what $V$ and $\{h_x\}$ might look like.
The first example is Optimal Power Flow (OPF) problem in power systems with tree structres, which is also the motivating problem for us to develop this theory.
By finding the Lyapunov-like function and paths, we show in \cite{ZhoL20} the first known condition (that can be checked {\it a priori}) for OPF to have no spurious local optima.
The same condition was only known to guarantee exact relaxation before our work.

In the second example, we study the Low Rank Semidefinite Program (LRSDP) problem, which was known to have weakly exact relaxation \cite{barvinok1995problems, pataki1998rank} and no spurious local optima \cite{burer2005local} in existing literatures.
Specifically, we show that part of the results proved in \cite{burer2005local} can also be proved by finding appropriate $V$ and $\{h_x\}$.
They exemplify the usage of \cref{thm:suff}, \cref{co:lo_is_go} and \cref{thm:weak_suff} in practice.

\subsection{Optimal Power Flow}
Consider a radial power network with an underlying connected directed graph $\G(\V,\E)$.
Let $\V:=\{0, 1, \cdots, N-1\}$ be the set of buses (i.e., nodes), and $\E\subseteq\V\times\V$ be the set of power lines (i.e., edges).
We will refer to a power line from bus $j$ to bus $k$ by $j\to k$ or $(j,k)$ interchangeably.
For each power line $(j,k)$, its series admittance is denoted by $y_{jk}\in\Complex$, and its series impedance is hence $z_{jk}:=y_{jk}^{-1}$.
Both the real and imaginary parts of $z_{jk}$ are assumed to be positive.

As we assume $\G$ is a tree, we can adopt the DistFlow Model \cite{Baran1989a, Baran1989b} to formulate power flow equations. For each bus $j$, let $V_j\in\Complex, s_j=p_j+\iu q_j\in\Complex$ denote its voltage and bus injection respectively.
For line $(j,k)$, let $S_{jk}$ and $I_{jk}\in\Complex$ denote the branch power flow and current from bus $j$ to $k$, both at the sending end.
Let $v_j:=|V_j|^2\in\Real$ and $\ell_{jk}:=|I_{jk}|^2\in\Real$.
We will denote the conjugate of a complex number $a$ by $a^\Hn$.

The power flow equations are:
\begin{subequations}
\begin{align}
v_j &= v_k+2{\rm Re}(z_{jk}S_{jk}^\Hn)-|z_{jk}|^2\ell_{jk}, & \forall (j,k) &\in\E \label{eq:pf2.a}\\
v_j &= \frac{|S_{jk}|^2}{\ell_{jk}}, & \forall (j,k) &\in\E \label{eq:pf2.b}\\
s_j&=\sum_{k:j\to k}S_{jk}-\sum_{i:i\to j}(S_{ij}-z_{ij}\ell_{ij}), & \forall j &\in\V \label{eq:pf2.c}.
\end{align}
\label{eq:pf2}
\end{subequations}

Given a cost function $f(s):\Complex^{N}\to \Real$, we are interested in the following OPF problem:
\begin{subequations}
\begin{eqnarray}
\underset{x=(s,v,\ell,S)}{\text{minimize}}  && f(s)
\label{eq:opf.a}\\
\text{\quad subject to}& &\eqref{eq:pf2}
\label{eq:opf.b}\\
& & \underline{v}_j \leq v_j \leq \overline{v}_j
\label{eq:opf.c}\\
& & \underline{s}_j \leq s_j \leq \overline{s}_j
\label{eq:opf.d}\\
& &  \ell_{jk} \leq \overline{\ell}_{jk}
\label{eq:opf.e}
\end{eqnarray}
\label{eq:opf}
\end{subequations}
All the inequalities for complex numbers in this section are enforced for both the real and imaginary parts.

\begin{definition}
A function $g:\Real\to\Real$ is \emph{strongly increasing} if there exists real $c>0$ such that for any $a> b$, we have
\begin{align*}
g(a)-g(b)\geq c(a-b).
\end{align*}
\end{definition}
We now make the following assumptions on OPF:
\begin{enumerate}[label=(\roman*)]
\item The underlying graph $\G$ is a tree.
\item The cost function $f$ is convex, and is strongly increasing in ${\rm Re}(s_j)$ (or ${\rm Im}(s_j)$) for each $j\in\V$ and non-decreasing in ${\rm Im}(s_j)$ (or ${\rm Re}(s_j)$).
\item The problem \eqref{eq:opf} is feasible.
\item The line current limit satisfies $\overline{\ell}_{jk}\leq\underline{v}_j|y_{jk}|^2$.
\end{enumerate}
Assumption (i) is generally true for distribution networks and assumption (iii) is typically mild.
As for (ii), $f$ is commonly assumed to be convex and increasing in ${\rm Re}(s_j)$ and ${\rm Im}(s_j)$ in the literature (e.g., \cite{zhang2013geometry, gan2015exact}).
Assumption (ii) is only slightly stronger since one could always perturb any increasing function by an arbitrarily small linear term to achieve strong monotonicity. 
Assumption (iv) is not common in the literature but is also mild because of the following reason.
Typically $V_j = (1+\epsilon_j)e^{\iu \theta_j}$ in per unit where $\epsilon\in [-0.1, 0.1]$ 
and the angle difference $\theta_{jk} := \theta_j- \theta_k$ between two neighboring buses $j, k$ typically 
has a small magnitude.
Thus the maximum value of $|V_j-V_k|^2 = |(1+\epsilon_j)e^{\iu \theta_{jk}} - (1+\epsilon_k)|^2$, which is equivalent to $\overline{\ell}_{jk}/|y_{jk}|^2$, should be much smaller than $\underline{v}_j$ which is $\approx 1$ per unit.

Problem \eqref{eq:opf} is non-convex, as constraint \eqref{eq:pf2.b} is not convex.
Denote by $\X$ the set of $(s,v,\ell,S)$ that satisfy \eqref{eq:opf.b}-\eqref{eq:opf.e}, so \eqref{eq:opf} is in the form of \eqref{eq:opt}.
We can relax \eqref{eq:opf} by convexifying \eqref{eq:pf2.b} into a second-order cone \cite{farivar2013branchI}:
\begin{subequations}
\begin{eqnarray}
\underset{x=(s,v,\ell,S)}{\text{minimize}}  && f(s)
\label{eq:opfR.a}\\
\text{\quad subject to}& &\eqref{eq:pf2.a}, \eqref{eq:pf2.c}, \eqref{eq:opf.c}-\eqref{eq:opf.e}
\label{eq:opfR.b}\\
& & |S_{jk}|^2\leq v_j\ell_{jk}
\label{eq:opfR.c}
\end{eqnarray}
\label{eq:opfR}
\end{subequations}

One can similarly regard $\XR$ as the set of $(s,v,\ell,S)$ that satisfy \eqref{eq:opfR.b}, \eqref{eq:opfR.c}.
It is proved in \cite{farivar2013branchI} that if $\underline{s}_j=-\infty-\iu\infty$ for all $j\in\V$,
then \eqref{eq:opfR} is exact, meaning any optimal solution of \eqref{eq:opfR} is also feasible and hence globally optimal for \eqref{eq:opf}.
Now we show that the same condition also guarantees that any local optimum of \eqref{eq:opf} is also globally optimal.
This implies that a local search algorithm such as the primal-dual interior point method can produce a global optimum as long as it  converges.

\begin{theorem}\label{thm:opf}
If $\underline{s}_j=-\infty-\iu\infty$ for all $j\in\V$, then any local optimum of \eqref{eq:opf} is a global optimum.
\end{theorem}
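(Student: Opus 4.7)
The plan is to verify Condition (C') of \cref{co:lo_is_go} and thereby conclude that every local optimum of \eqref{eq:opf} is global. The natural Lyapunov-like function for an SOCP relaxation is
\begin{align*}
V(x) \ := \ \sum_{(j,k)\in\E}\bigl(v_j\ell_{jk}-|S_{jk}|^2\bigr),
\end{align*}
which is continuous on $\XR$, strictly positive on $\XR\setminus\X$ by \eqref{eq:opfR.c}, and vanishes exactly on $\X$ because equality in \eqref{eq:opfR.c} restores \eqref{eq:pf2.b}. Thus $V$ qualifies as a Lyapunov-like function in the sense of \cref{def:Lyapunov}.

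To build the path, fix $x^0=(s^0,v^0,\ell^0,S^0)\in\XR\setminus\X$ and define $x^1$ by replacing each $\ell^0_{jk}$ by $\ell^1_{jk}:=|S^0_{jk}|^2/v^0_j\leq\ell^0_{jk}$, keeping the sending-end flows $S^0_{jk}$ unchanged, and recomputing $(v^1,s^1)$ from the DistFlow equations \eqref{eq:pf2.a} and \eqref{eq:pf2.c} propagated outward from bus $0$. Because $\G$ is a tree, this reconstruction is unique, and $x^1$ lies in $\X$ by construction. Since $\textup{Re}(z_{jk})>0$ and $\textup{Im}(z_{jk})>0$, the reduction $\ell^0_{jk}-\ell^1_{jk}\geq 0$ strictly decreases both the real and imaginary parts of every upstream injection through \eqref{eq:pf2.c}; together with the removal of the lower injection bounds ($\underline{s}_j=-\infty-\iu\infty$) this guarantees that \eqref{eq:opf.d} remains satisfied. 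Assumption (iv) $\overline{\ell}_{jk}\leq\underline{v}_j|y_{jk}|^2$ keeps the reconstructed $v^1_j$ in $[\underline{v}_j,\overline{v}_j]$ via \eqref{eq:pf2.a}, and $\ell^1_{jk}\leq\overline{\ell}_{jk}$ is automatic. I would then take $h_x$ to be the straight-line segment from $x^0$ to $x^1$ inside the convex set $\XR$.

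Along this segment, $V\circ h_x$ is a quadratic polynomial in $t$ that is non-negative on $[0,1]$ and vanishes at $t=1$, and a short sign check on its coefficients (using $\ell^1_{jk}\leq\ell^0_{jk}$ and $v^1_j\leq v^0_j$ coordinatewise) shows it is non-increasing; $f\circ h_x$ is convex with $f(x^1)<f(x^0)$, and the strong monotonicity assumption (ii) upgrades this drop to the linear lower bound $f(h_x(t))-f(h_x(s))\geq k\|h_x(t)-h_x(s)\|$ required by (C') with $k$ proportional to the strong-monotonicity constant of $f$ and the minimum line resistance. Uniform boundedness and uniform equicontinuity of $\{h_x\}$ follow from \cref{cor:linear} since every $h_x$ is a single line segment in the compact set $\XR$. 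With all parts of (C') in place, \cref{co:lo_is_go} yields the theorem.

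The main technical obstacle is the verification that $V\circ h_x$ is monotone non-increasing along the interpolation, since $V$ is bilinear rather than convex; if the quadratic analysis sketched above fails in a particular direction, the fallback is to replace the straight segment by a two-segment polyline that first performs the $\ell$-reduction (along which $V$ decreases explicitly) and then performs the $(s,v)$ adjustment (along which $V$ stays zero because \eqref{eq:pf2.b} already holds). This alternative construction still gives paths of bounded total length, so \ref{C2} continues to hold via \cref{cor:linear}. The remaining routine checks — that the readjustment of $(v,s)$ in the reconstruction step does not violate \eqref{eq:opf.c} and that the strong-monotonicity constant of $f$ indeed propagates to a linear lower bound on $f(h_x(t))-f(h_x(s))$ in the ambient norm — are the steps where the hypothesis $\underline{s}_j=-\infty-\iu\infty$ and assumption (iv) are genuinely used.
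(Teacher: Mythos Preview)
Your choice of Lyapunov-like function $V$ matches the paper's, and the overall plan (verify (C') and invoke \cref{co:lo_is_go}) is correct. The path construction, however, has a genuine gap: your endpoint $x^1$ does not lie in $\X$.

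You fix $S^0$, set $\ell^1_{jk}=|S^0_{jk}|^2/v^0_j$, and then recompute $v^1$ from \eqref{eq:pf2.a}. But \eqref{eq:pf2.a} is linear in $(v,S,\ell)$, and since $\ell^1\neq\ell^0$ on at least one edge while $S^1=S^0$, the recomputed voltages satisfy $v^1\neq v^0$. At the endpoint, constraint \eqref{eq:pf2.b} reads $v^1_j=|S^0_{jk}|^2/\ell^1_{jk}=v^0_j$, which now fails; hence $x^1\notin\X$ and $h_x(1)\notin\X$ as \ref{C1} requires. The claim $v^1_j\leq v^0_j$ that you use in the monotonicity sketch is therefore not merely a sign issue but precisely the reason the construction breaks. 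Your two-segment fallback has the same obstruction: reducing $\ell$ alone in the first segment violates the \emph{equality} constraint \eqref{eq:pf2.a}, which is part of $\XR$, so the path leaves $\XR$; and if you simultaneously adjust $v$ to stay in $\XR$, then $V$ contains the product $v_j\ell_{jk}$ and is no longer ``explicitly decreasing,'' while the second-segment claim $V\equiv 0$ also fails because $v$ is still moving.

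The paper avoids this by keeping $v$ \emph{unchanged} along the entire path and moving $S_{jk}$ and $\ell_{jk}$ together: $\hS_{jk}(t)=S_{jk}-\tfrac{t}{2}z_{jk}\Delta_{jk}$, $\hl_{jk}(t)=\ell_{jk}-t\Delta_{jk}$, with $\Delta_{jk}$ the positive root of $\phi_{jk}(a)=\tfrac{|z_{jk}|^2}{4}a^2+\bigl(v_j-\mathrm{Re}(z_{jk}S_{jk}^{\Hn})\bigr)a+|S_{jk}|^2-v_j\ell_{jk}$. This coupled motion is designed so that the change in $2\,\mathrm{Re}(z_{jk}\hS_{jk}^{\Hn})$ exactly cancels the change in $|z_{jk}|^2\hl_{jk}$, preserving \eqref{eq:pf2.a} with $v$ fixed; and $\phi_{jk}(\Delta_{jk})=0$ forces equality in the cone constraint at $t=1$, so $h_x(1)\in\X$. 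Assumption (iv) is used not to control voltage bounds (voltages never move) but to show $v_j-\mathrm{Re}(z_{jk}S_{jk}^{\Hn})\geq 0$, hence $\phi_{jk}$ is increasing on $[0,\Delta_{jk}]$ and $V(h_x(t))=-\sum_{(j,k)\in\M}\phi_{jk}(t\Delta_{jk})+\text{const}$ is strictly decreasing. With this corrected path the (C') estimate goes through essentially as you sketched, because $\|h_x(t)-h_x(s)\|$ in every coordinate block is controlled by $\|\hs(t)-\hs(s)\|$ up to a factor depending only on $\min_{(j,k)}\|z_{jk}\|$.
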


\begin{proof}
Our strategy is to construct appropriate $V$ and $\{h_x\}$ and then prove such construction satsify both Condition (C) and Condition (C').
Let
\begin{align}\label{eq:V}
V(x):=\sum_{(j,k)\in\E}v_j\ell_{jk}-|S_{jk}|^2.
\end{align}
Clearly, $V$ is a valid Lyapunov-like function satisfying \cref{def:Lyapunov}.

For each $x=(s,v,\ell,S)\in\XR\setminus\X$, let $\M$ be the set of $(j,k)\in\E$ such that $|S_{jk}|^2< v_j\ell_{jk}$.
For $(j,k)\in\M$, 
the quadratic function
\begin{align*}
\phi_{jk}(a):=\frac{|z_{jk}|^2}{4}a^2+\big(v_j-{\rm Re}(z_{jk}S_{jk}^\Hn)\big)a + |S_{jk}|^2 - v_j\ell_{jk}
\end{align*}
must have a unique positive root as $\phi_{jk}(0)<0$.
We define $\Delta_{jk}$ to be this positive root if $(j,k)\in\M$ and $0$ otherwise.

Assumption (iv) implies $\ell_{jk}\leq v_j|y_{jk}|^2$, and therefore
\begin{align*}
&\ v_j-{\rm Re}(z_{jk}S_{jk}^\Hn)\ \geq \ v_j-|z_{jk}||S_{jk}|\\
\geq &\ v_j-|z_{jk}|\sqrt{v_j\ell_{jk}}
\ \geq \ v_j-|z_{jk}|\sqrt{v_j^2|y_{jk}|^2} \ \ = \ \ 0.
\end{align*}
It further implies $\phi_{jk}(a)$ is strictly increasing for $a\in[0,\Delta_{jk}]$.

Now consider the path $h_x(t):=(\hs(t),\hv(t),\hl(t),\hS(t))$ for $t\in[0,1]$, where
\begin{subequations}
\begin{align}
\hs_j(t)&=s_j-\frac{t}{2}\sum_{i:i\to j}z_{ij}\Delta_{ij}-\frac{t}{2}\sum_{k:j\to k}z_{jk}\Delta_{jk},\\
\hv_j(t)&=v_j,\\
\hl_{jk}(t)&=\ell_{jk}-t\Delta_{jk},\\
\hS_{jk}(t)&=S_{jk}-\frac{t}{2}z_{jk}\Delta_{jk}.
\end{align}
\label{eq:path}
\end{subequations}

Clearly we have $h_x(0)= x$.
It can be easily checked that $h_x(t)$ is feasible for \eqref{eq:opfR} for $t\in[0,1]$ 
and $h_x(1)$ is feasible for \eqref{eq:opf} (see \cite{ZhoL20}).
Therefore, $h_x$ is indeed $[0,1]\to\XR$ and $h_x(1)\in\X$. 

Since $z_{jk}>0$, both real and imaginary parts of $\hs_j(t)$ are strictly decreasing for $(j,k)\in\M$ and stay unchanged otherwise.
By assumption (ii), $f(\hs(t))$ is also strictly decreasing.
To show $V(h_x(t))$ is also decreasing, we notice that $V(h_x(t))$ equals to
\begin{align*}
&\sum_{(j,k)\in\E}\hv_j(t)\hl_{jk}(t)-|\hS_{jk}(t)|^2\\
=&\sum_{(j,k)\in\M^\co}v_j\ell_{jk}-|S_{jk}|^2 + \sum_{(j,k)\in\M}\hv_j(t)\hl_{jk}(t)-|\hS_{jk}(t)|^2\\
=&\sum_{(j,k)\in\M^\co}v_j\ell_{jk}-|S_{jk}|^2 - \sum_{(j,k)\in\M}\phi_{jk}(t\Delta_{jk}).
\end{align*}
As $\phi_{jk}(a)$ is strictly increasing for $a\in[0,\Delta_{jk}]$, we conclude that $V(h_x(t))$ is strictly decreasing for $t\in[0,1]$.

By \cref{cor:linear}, the set $\{h_x\}_{x\in\XR\setminus\X}$ is uniformly bounded and uniformly equicontinuous as all $h_x(t)$ are linear functions in $t$.
In summary, Condition (C) is satisfied.

Finally, we show Condition (C') also holds.
By assumption (ii), there exists some real $c>0$ independent of $x$ such that for any $0\leq a<b\leq 1$,
\begin{align*}
&f(\hs(a))-f(\hs(b)) \\
\geq &\ c\sum_{j\in\V}{\rm Re}(\hs_j(a)-\hs_j(b))+{\rm Im}(\hs_j(a)-\hs_j(b))\\
= & \ c\|\hs(a)-\hs(b)\|_{\rm m}
\end{align*}
where $\|\cdot\|_{\rm m}$ is defined as $\|{\bf a}\|_{\rm m}:=\sum_i|{\rm Re}(a_i)|+|{\rm Im}(a_i)|$ over the complex vector space.
It is easy to check $\|\cdot\|_{\rm m}$ is a valid norm.

On the other hand, by \eqref{eq:path} we have $\|\hv(a)-\hv(b)\|_{\rm m}\equiv0$ and
\begin{align*}
\|\hl(a)-\hl(b)\|_{\rm m}&\leq \frac{1}{\min\limits_{(j,k)\in\E}\{\|z_{jk}\|_{\rm m}\}}\|\hs(a)-\hs(b)\|_{\rm m},\\
\|\hS(a)-\hS(b)\|_{\rm m}&\leq \frac{1}{2}\|\hs(a)-\hs(b)\|_{\rm m}.
\end{align*}
Therefore,
\begin{align*}
\|h_x(a)-h_x(b)\|_{\rm m}
\leq \Big(\frac{3}{2}+\frac{1}{\min\limits_{(j,k)\in\E}\{\|z_{jk}\|_{\rm m}\}}\Big)\|\hs(a)-\hs(b)\|_{\rm m}
\end{align*}
and there exists $\hat{c}>0$ independent of $x, a, b$ such that 
\begin{align*}
f(\hs(a))-f(\hs(b))\geq \hat{c}\|h_x(a)-h_x(b)\|_{\rm m}.
\end{align*}
Therefore Condition (C') is also satisfied and by Theorem \ref{co:lo_is_go}, any local optimum of \eqref{eq:opf} is a global optimum.
\end{proof}

The results in this subsection only apply to radial networks, which serve as the underlying network for balanced distribution power systems.
For transmission systems and unbalanced distribution systems, networks are usually highly meshed.
It has been found that for most of meshed networks, both convex relaxation and local search algorithms can also yield the global optimum for most of testcases \cite{Jabr02,gopinath2020proving}.
Thus \cref{thm:nece} suggests that there may also exist similar Lyapunov-like function and paths for meshed networks.
Finding such Lyapunov-like function and paths would be an interesting future work to extend our results in this paper.

\subsection{Low Rank Semidefinite Program}
This subsection proves a known result in \cite{burer2005local} but using a different approach. 
Adopting the same notations as \cite{burer2005local}, we have the following problem.
\begin{subequations}
\begin{eqnarray}
\underset{X\geq 0}{\text{minimize}}  && \tr(CX)
\label{eq:lrsdp.a}\\
\text{\quad subject to}& &
\label{eq:lrsdp.b}
\tr(A_iX) = b_i,\quad i=1,\cdots,m\\
& & \rank(X)\leq r
\label{eq:lrsdp.c}
\end{eqnarray}
\label{eq:lrsdp}
\end{subequations}
Here, $C$, $A_i$, $X$ are all $n$-by-$n$ matrices.
We assume the problem is feasible and $\{X\geq 0~|~\eqref{eq:lrsdp.b}\}$ is compact.

\begin{theorem}\label{thm:lrsdp}
If $(r+1)(r+2)/2 > m+1$, then any local optimum of \eqref{eq:lrsdp} is either a global optimum or a pseudo local optimum.
\end{theorem}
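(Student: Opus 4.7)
The plan is to construct a Lyapunov-like function $V$ and a family of paths $\{h_X\}_{X\in\XR\setminus\X}$ satisfying Conditions \ref{C3} and \ref{C2}, so that \cref{thm:weak_suff} yields the conclusion. Take the relaxation to be the SDP feasible set $\XR:=\{X\succeq 0 : \tr(A_iX)=b_i,\ i=1,\ldots,m\}$, which is convex, compact by assumption, and contains the low-rank set $\X=\XR\cap\{\rank(X)\le r\}$. Define
\[
V(X) := \sum_{i=r+1}^n \lambda_i(X),
\]
the sum of the smallest $n-r$ eigenvalues of $X$; by Weyl's inequality this is continuous on $\XR$ and vanishes exactly on $\X$, so it is a valid Lyapunov-like function in the sense of \cref{def:Lyapunov}.

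For each $X\in\XR\setminus\X$ with $\rank(X)=k>r$, I would build $h_X$ by iterating the Pataki/Barvinok rank reduction until the rank reaches $r$. Factor $X=RR^\top$ with $R\in\K^{n\times k}$ of full column rank; the $m+1$ trace-orthogonality conditions
\[
\tr(R^\top A_i R\cdot S)=0,\ i=1,\ldots,m,\qquad \tr(R^\top C R\cdot S)=0
\]
cut out a subspace of positive dimension in the $k(k+1)/2$-dimensional space of symmetric $k\times k$ matrices, since $k(k+1)/2\ge (r+1)(r+2)/2>m+1$. Pick a nonzero $S$ in this subspace and, flipping its sign if necessary, assume $\lambda_{\min}(S)<0$. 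The line segment $X(t)=R(I+tS)R^\top$ stays in $\XR$ for $t\in[0,t^*]$ with $t^*=-1/\lambda_{\min}(S)$, the cost $\tr(CX(t))$ is constant along it, and $\rank(X(t^*))\le k-1$. Concatenating at most $n-r$ such segments produces a continuous path $h_X:[0,1]\to\XR$ from $X$ to a point in $\X$; Condition \ref{C2} for the arc-length reparameterizations follows from \cref{cor:linear}, since the number of linear segments per path is uniformly bounded by $n-r$ and $\XR$ is compact.

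The delicate step is the second half of \ref{C3}: that $V(h_X(t))$ is non-increasing. The nonzero eigenvalues of $X(t)=R(I+tS)R^\top$ coincide with those of $(R^\top R)^{1/2}(I+tS)(R^\top R)^{1/2}$, and for a generic $S$ in the kernel, individual eigenvalues can cross inside $[0,t^*]$, so the bottom $k-r$ spectral sum is not automatically monotone. The main obstacle is therefore to exploit the positive-dimensional freedom in the kernel (ensured by the strict inequality $(r+1)(r+2)/2>m+1$) to refine the choice of $S$, ideally obtaining one that is itself negative semidefinite, since then $X(\cdot)$ is Loewner-decreasing and every eigenvalue, hence $V$, is non-increasing. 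If no negative semidefinite $S$ lives in the kernel, a fallback is to replace $V$ with the path-distance Lyapunov function $V(X):=\inf\{L(h):h\in\tHset,\ h(0)=X,\ h(1)\in\X\}$ from \eqref{eq:Vdef}, whose feasibility is ensured by the Pataki path above and whose monotonicity along its minimizer together with continuity on $\XR$ follow by adapting the arguments of Section~IV to this semialgebraic setting. Once both \ref{C3} and \ref{C2} are established, \cref{thm:weak_suff} delivers \cref{thm:lrsdp}.
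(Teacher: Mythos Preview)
Your construction of $V$ and of the Pataki/Barvinok rank-reduction segments is the same as the paper's, and you have correctly located the only nontrivial step: the monotonicity of $V$ along each segment. But the two fixes you propose do not work, and you are missing the one-line observation that the paper actually uses.

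The first workaround, finding a negative semidefinite $S$ in the kernel, is impossible here: if $S\preceq 0$ then $R(I+tS)R^\top\in\XR$ for all $t\le 0$ (all linear constraints are preserved and $I+tS\succeq I$), contradicting the assumed compactness of $\XR$. For the same reason no $S\succeq 0$ exists, so every nonzero $S$ in the kernel is indefinite and both directions hit a rank drop at finite parameter. The second workaround, importing the path-length Lyapunov function from Section~IV, is circular: the continuity proof there (specifically the upper semicontinuity step) relies on \cref{lm:short_curve}, which in turn uses \cref{lm:connectedness} under the standing hypothesis that every local optimum is global. That is precisely what you are trying to establish.

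The missing idea is simply that $V(X)=\sum_{i=r+1}^n\lambda_i(X)=\tr(X)-\sum_{i=1}^r\lambda_i(X)$ is \emph{concave} (Ky Fan: the sum of the $r$ largest eigenvalues is convex). Hence $\alpha\mapsto V\big(R(I+\alpha S)R^\top\big)$ is concave on the interval where $I+\alpha S\succeq 0$, and from $\alpha=0$ it is non-increasing in at least one of the two directions. Pick that direction; since $S$ is indefinite, the chosen ray still terminates at a finite $\alpha$ with a strict rank drop. This gives \ref{C3} directly for your spectral $V$, and the rest of your argument (concatenation of at most $n-r$ linear pieces, \cref{cor:linear}, then \cref{thm:weak_suff}) goes through exactly as in the paper.
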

Before proving \cref{thm:lrsdp}, we consider the convex relaxation of \eqref{eq:lrsdp} as
\begin{subequations}
\begin{eqnarray}
\underset{X\geq 0}{\text{minimize}}  && \tr(CX)
\label{eq:lrsdp_rlx.a}\\
\text{\quad subject to}& &
\tr(A_iX) = b_i,\quad i=1,\cdots,m
\end{eqnarray}
\label{eq:lrsdp_rlx}
\end{subequations}
As a side note, the results in \cite{barvinok1995problems, pataki1998rank} show that if $(r+1)(r+2)/2 > m$, then \eqref{eq:lrsdp_rlx} is weakly exact to \eqref{eq:lrsdp}.
While our theorem is the same as in \cite{burer2005local}, some insights to find $V$ and $\{h_X\}$ are also from the structures first raised in \cite{barvinok1995problems, pataki1998rank}.

\begin{proof}
Clearly, \eqref{eq:lrsdp} can be reformulated in the form of \eqref{eq:opt} by setting $f(X)=\tr(CX)$, $\X=\{X\geq 0~|~\eqref{eq:lrsdp.b}, \eqref{eq:lrsdp.c}\}$
and $\XR=\{X\geq 0~|~\eqref{eq:lrsdp.b}\}$.
Define $V$ as
\begin{align*}
V(X):=\sum_{i=r+1}^n \lambda_i(X)
\end{align*}
where $\lambda_i(X)$ is the $i$\textsuperscript{th} eigenvalue of $X$ (in decreasing order).
This function $V$ satisfies \cref{def:Lyapunov} and is concave.

For fixed $X\in\XR\setminus\X$, we denote $\rank(X)$ as $r_0>r$.
We first construct $r_0-r$ paths labeled as $h_1,h_2,\cdots,h_{r_0-r}$.
When we construct $h_i$, if $i>1$ then we assume path $h_{i-1}$ has already been constructed and let $X_{i-1}:=h_{i-1}(1)$.
We let $X_0=X$.
For $i\geq 1$, if $\rank(X_{i-1})\leq r_0-i$ then we let $h_i(t)\equiv X_{i-1}$ for $t\in[0,1]$.
Otherwise, we decompose $X_{i-1}$ as $U\Sigma U^\Hn$ where $\Sigma$ is a $k$-by-$k$ positive definite diagonal matrix with $k=\rank(X_{i-1})>r_0-i$.
The linear system
\begin{align}
\nonumber
\tr(CUYU^\Hn)=&0\\\label{eq:lin_sys}
\tr(A_iUYU^\Hn)=&0,~ i=1,\cdots,m
\end{align}
must have a non-zero solution for Hermitian matrix $Y\in\Complex^{k\times k}$.
To see this, we have $k\geq r_0-i+1\geq r+1$, and thus $k(k+1)/2\geq (r+1)(r+2)/2>m+1$.
As a result, \eqref{eq:lin_sys} has more unknown variables than equations.
We simply denote this non-zero solution as $Y$ and for any $\alpha\in\Real$, $\alpha Y$ is also a solution to \eqref{eq:lin_sys}.
The concavity of $V$ also implies that $V(U(\Sigma+\alpha Y)U^\Hn)$ is concave in $\alpha$ when $U$ and $\Sigma$ are fixed.
Since $\Sigma>0$, one of the following two scenarios must be true.
\begin{itemize}
\item $\exists a<0$ such that $V(U(\Sigma+\alpha Y)U^\Hn)$ is non-decreasing, $\rank(U(\Sigma+\alpha Y)U^\Hn) \leq k$ for $\alpha\in[a,0]$ and $\rank(U(\Sigma+a Y)U^\Hn) \leq k-1$.
\item $\exists b>0$ such that $V(U(\Sigma+\alpha Y)U^\Hn)$ is non-increasing, $\rank(U(\Sigma+\alpha Y)U^\Hn) \leq k$  for $\alpha\in[0,b]$ and $\rank(U(\Sigma+b Y)U^\Hn) \leq k-1$.
\end{itemize}
Without loss of generality, we suppose $V(U(\Sigma+\alpha Y)U^\Hn)$ is non-increasing for $\alpha\in[0,b]$ (otherwise we take $-Y$ instead).
We then construct $h_i$ as $h_i(t)=U(\Sigma+t b Y)U^\Hn$ for $t\in[0,1]$.
By construction, $V(h_i(t))$ is non-increasing and $f(h_i(t))$ stays a constant.

Finally, we construct $h_X$ as the concatenation of paths $h_1,\cdots,h_{r_0-r}$.
That is to say,
\begin{align*}
h_X(t):=h_i((r_0-r)t-i+1) \text{ for } t\in\Big[\frac{i-1}{r_0-r},\frac{i}{r_0-r}\Big].
\end{align*}
It is easy to see $h_X$ is continuous and $h_X(0)=h_1(0)=X$.
To see $h_X(1)\in\X$, we prove that $\rank(X_i)\leq r_0-i$.
We first have $\rank(X_0)=\rank(X)=r_0$.
For $i\geq 1$, we have $\rank(X_i)=\rank(X_{i-1})$ if $\rank(X_{i-1})\leq r_0-i$ and $\rank(X_i)\leq \rank(X_{i-1})-1$ otherwise.  
By induction, we can prove $\rank(X_i)\leq r_0-i$ always holds.
As a result, $\rank(h_X(1))=\rank(h_{r_0-r}(1))\leq r$ and thus $h_X(1)\in\X$.
By construction, $h_i(t)$ never violates \eqref{eq:lrsdp.b} and thus is in $\XR$, so is $h_X(t)$ for all $t$.
Functions $V(h_i(t))$ and $f(h_i(t))$ being non-increasing implies that $V(h_X(t))$ and $f(h_X(t))$ are also non-increasing.
Therefore, \ref{C3} is satisfied.
By \cref{cor:linear} \ref{C2} also holds for $\{\overline{h_X}\}$.
It completes the proof (by \cref{thm:weak_suff}). 
\end{proof}
\begin{remark}
In \cite{burer2005local}, Theorem 3.4 claims that any local optimum of \eqref{eq:lrsdp} should also be globally optimal, unless it is harbored in some positive-dimensional face of SDP.
The result in our paper further asserts that if it is indeed harbored in such a face, then there must be some point on the edge of this face whose cost can be further reduced in its neighborhood
(i.e., the local optimum is in the same situation as point $c$ rather than $d$ as in \cref{fig:lo}).
\end{remark}
\section{Conclusion and Discussion}
\begin{table}[htp]
\caption{Sufficient and necessary conditions}
\begin{center}
\begin{tabular}{|c|c|c|}
\hline
Condition & Relaxation exactness & Local optimality\\
\Xhline{3\arrayrulewidth}
\multicolumn{3}{|l|}{Sufficient conditions: $\Rightarrow$}\\
\hline
\ref{C1}, \ref{C2} & Strong exactness & l.o. is p.l.o. or g.o. \\
\hline
\ref{C3}, \ref{C2} & Weak exactness & l.o. is p.l.o. or g.o. \\
\hline
(C') & Strong exactness & l.o. is g.o. \\
\Xhline{3\arrayrulewidth}
\multicolumn{3}{|l|}{Necessary condition: $\Leftarrow$}\\
\hline
\ref{C1}, \ref{C2} & Strong exactness & l.o. is g.o. \\
\hline
\end{tabular}
\end{center}
\label{tb:suff_nece}
\end{table}

\cref{tb:suff_nece} summaries both sufficient and necessary conditions for non-convex problem \eqref{eq:opt} to simultaneously have exact (weak or strong) relaxation and no spurious local optima (allowing or not allowing pseudo local optima).
The necessary condition relies on \cref{as:semianalytic}, which is usually true for real-world problems.
Those results provide a new perspective to certify a non-convex problem is computationally easy to solve.
Furthermore, whenever the problem is indeed computationally easy, the certificates (Lyapunov-like functions and paths) are guaranteed to exist.
We also provide a hierarchical framework which shows how such certificates for a complicated problem can be constructed from primitive problems.
Our results have been applied to OPF and LRSDP problems.

Based on the examples shown in \cref{sec:application}, a natural way to apply this approach is to first look at existing results on exact relaxation, and then construct $V$ and $\{h_x\}$ according to the hidden structure underlying the exactness.
Once $V$ and $\{h_x\}$ are appropriately constructed, our result can help extend existing results on relaxation exactness to new results on local optimality.

Compared to some existing techniques to study local optimality, our results do not require differentiating or analyzing the curvature of feasible sets.
It allows the feasible sets to incorporate more complicated and possibly non-convex constraints. Those non-convex constraints are common for problems arising in cyber physical systems which are generally governed by physical laws.   

\appendices

\ifCLASSOPTIONcaptionsoff
  \newpage
\fi



%

\bibliographystyle{IEEEtran}
\bibliography{my-bibliography,PowerRef-201202}

%
%
%
%
%




\end{document}